\numberwithin{equation}{section}
\theoremstyle{plain}
\newtheorem{theorem}{Theorem}[section]
\newtheorem{proposition}[theorem]{Proposition}         
\newtheorem{corollary}[theorem]{Corollary} 
\newtheorem{lemma}[theorem]{Lemma} 
\newtheorem{definition}[theorem]{Definition}  
\theoremstyle{definition}  
\newtheorem{remark}[theorem]{Remark} 
\newcommand{\C}{\mathbb C}   
\newcommand{\R}{\mathbb R}
\newcommand{\Z}{\mathbb Z}
\newcommand{\N}{\mathbb N}  
\newcommand{\al}{\alpha}
\newcommand{\be}{\beta} 
\newcommand{\ga}{\gamma}
\newcommand{\de}{\delta}
\newcommand{\la}{\lambda}
\newcommand{\si}{\sigma} 
\newcommand{\eps}{\epsilon}
\newcommand{\Om}{\Omega}
\newcommand{\De}{\Delta}
\renewcommand{\th}{\theta}
\newcommand{\om}{\omega}
\newcommand{\Ga}{\Gamma}
\newcommand{\ze}{\zeta}
\DeclareMathOperator{\diag}{diag}
\DeclareMathOperator{\ar}{arg}
\newcommand{\no}{\noindent}
\newcommand{\pr}{\prime} 
\newcommand{\prr}{{\prime\prime}} 
\newcommand{\st}{\ \vert\ }   
\renewcommand{\ll}{\lq\lq}
\newcommand{\rr}{\rq\rq\ }
\newcommand{\rrr}{\rq\rq}
\newcommand{\bp}{\begin{pmatrix}} 
\newcommand{\ep}{\end{pmatrix}} 
\newcommand{\bsp}{\left(\begin{smallmatrix}} 
\newcommand{\esp}{\end{smallmatrix}\right)}
\newcommand{\zbar}{  {\bar z}  }
\newcommand{\zzb}{ {z\bar z}  }
\renewcommand{\i}{ {\scriptscriptstyle\sqrt{-1}}\, }
\newcommand{\ii}{ {\scriptstyle\sqrt{-1}}\, }
\newcommand{\Psiz}{  \Psi^{(0)}  }
\newcommand{\Psii}{  \Psi^{(\infty)}  }
\newcommand{\psiz}{  \psi^{(0)}  }
\newcommand{\psii}{  \psi^{(\infty)}  }
\newcommand{\Sz}{  S^{(0)}  }
\newcommand{\Si}{  S^{(\infty)}  }
\newcommand{\Qz}{  Q^{(0)}  }
\newcommand{\Qi}{  Q^{(\infty)}  }
\newcommand{\barQz}{  {\bar Q}^{(0)}  }
\newcommand{\barQi}{  {\bar Q}^{(\infty)}  }
\newcommand{\Omz}{  \Om^{(0)}  }
\newcommand{\Omi}{  \Om^{(\infty)}  }
\newcommand{\dz}{  d_{(0)}  }
\newcommand{\di}{  d_{(\infty)}  }
\newcommand{\tPsiz}{  \tilde\Psi^{(0)}  }
\newcommand{\tPsii}{  \tilde\Psi^{(\infty)}  }
\newcommand{\tpsiz}{  \tilde\psi^{(0)}  }
\newcommand{\tpsii}{  \tilde\psi^{(\infty)}  }
\newcommand{\tSi}{ \tilde S^{(\infty)}  }
\newcommand{\tQz}{  \tilde Q^{(0)}  }
\newcommand{\tQi}{  \tilde Q^{(\infty)}  }
\newcommand{\tPi}{  \tilde\Pi  }
\newcommand{\fPsi}{  \,\underline{\!\Psi\!}\, }
\newcommand{\fPsiz}{  \,\underline{\!\Psi\!}\,^{(0)}  }
\newcommand{\fPsii}{  \,\underline{\!\Psi\!}\,^{(\infty)}  }
\newcommand{\fOmz}{  \,\underline{\!\Om\!}\,^{(0)}  }
\newcommand{\fOmi}{  \,\underline{\!\Om\!}\,^{(\infty)}  }
\newcommand{\boxe}{ \boxed{\vphantom{.}} }
\newcommand{\boxep}{ \boxed{\pi\vphantom{.}} }
\newcommand{\epr}{  e^\prime  }
\newcommand{\eprr}{  e^{\prime\prime}  }
\newcommand{\s}{s_m}
\begin{document}     

\title[Isomonodromy aspects II: Riemann-Hilbert problem]{Isomonodromy aspects of the tt*
equations of Cecotti and Vafa 
\\
II. Riemann-Hilbert problem}  

\author{Martin A. Guest, Alexander R. Its, and Chang-Shou Lin}      

\date{}   

\begin{abstract} In \cite{GuItLiXX} (part I) we computed the Stokes data, though not the \ll connection matrix\rrr, for the smooth solutions of the tt*-Toda equations whose existence we established by p.d.e.\ methods.  Here we give an alternative proof of the existence of some of these solutions by solving a Riemann-Hilbert problem.  In the process, we compute the connection matrix for all smooth solutions, thus completing the computation of the monodromy data.  We also give connection formulae relating the asymptotics at zero and infinity of all smooth solutions, clarifying the region of validity of the formulae 
established earlier by Tracy and Widom.  Finally, for the tt*-Toda equations, we resolve some conjectures of  Cecotti and Vafa concerning the positivity of  $S+S^t$ (where $S$ is the Stokes matrix) and the unimodularity of the eigenvalues of the monodromy matrix. 
\end{abstract}

\subjclass[2000]{Primary 81T40;
Secondary 53D45, 35Q15, 34M40}

\maketitle 

\section{Introduction}\label{intro}

The tt*  (topological---anti-topological fusion) equations were introduced by Cecotti and Vafa to describe certain
deformations of supersymmetric quantum field theories (\cite{CeVa91},\cite{CeVa92},\cite{CeVa92a}).  They are nonlinear equations and difficult to study directly.  On the other hand they are deeply related to geometry as well as physics, and they can be formulated as isomonodromic deformation equations (\cite{Du93}), which allows one to study them from the point of view of integrable systems.

A special case of the tt* equations,  which we call the  tt*-Toda equations, is
\begin{equation}\label{ost}
 2(w_i)_{\zzb}=-e^{2(w_{i+1}-w_{i})} + e^{2(w_{i}-w_{i-1})}, \ \  
 w_i:U\to\R
\end{equation}
subject to the \ll anti-symmetry condition\rr
\begin{equation}\label{as}
\begin{cases}
\ \  w_0+w_{l-1}=0, \ w_1+w_{l-2}=0,\ \ \dots\\
\ \  w_l+w_{n}=0, \ w_{l+1}+w_{n-1}=0,\ \ \dots
 \end{cases}
\end{equation}
for some $l\in\{0,\dots,n+1\}$,  and the radial condition $w_i=w_i(\vert z\vert)$. 
Here, $U$ is some open subset of $\C=\R^2$, and we assume that $w_i=w_{i+n+1}$ for all $i\in\Z$ and  $w_0+\cdots+w_n=0$.   Equation (\ref{ost}) alone is 
the well known two-dimensional periodic Toda lattice but with \ll opposite sign\rrr.

For $l=0$ (equivalently $l=n+1$),
condition (\ref{as}) means that
$w_i+w_{n-i}=0$ for all $i$.  This case was introduced by Cecotti and Vafa in \cite{CeVa91} as a relatively simple example of the tt* equations.  They predicted the existence of special solutions related to the quantum cohomology of complex projective spaces $\C P^n$ and unfoldings of singularities of type $A_{n+1}$.  For $n=1$ or $2$, the functions $w_0,\dots,w_n$ reduce to a single function, and equations (\ref{ost}) reduce to the sinh-Gordon or Tzitzeica (Bullough-Dodd) equations, which have been well studied (see \cite{MTW77},\cite{Ki89} and the references in \cite{FIKN06}).  However, the lack of
mathematical results for $n\ge 3$ has hindered subsequent attempts to verify the intriguing conjectures made by Cecotti and Vafa.  Our project addresses this problem.

 In \cite{GuLiXX},\cite{GuItLiXX}, using quite elementary p.d.e.\ arguments,
we found all solutions of the tt*-Toda equations on $U=\C^\ast=\C-\{0\}$ for $3\le n\le 5$, and we
computed the corresponding Stokes data.  
Their geometrical interpretation --- such as quantum cohomology or singularity theory --- in the case where the Stokes data is integral, was studied in \cite{GuLi2XX}.   
In this article we shall use the Riemann-Hilbert method of \cite{FIKN06}
to investigate these solutions more explicitly.

To explain this point of view, let us recall from \cite{GuItLiXX} that
equation (\ref{ost}) is the
compatibility condition $2w_{z\zbar}=[W^t,W]$ for the linear system
\begin{align*}
\begin{cases}
\Psi_z&=(w_z+\tfrac1\la W)\Psi\\
\Psi_\zbar&=(-w_{\zbar}+\la W^t)\Psi
\end{cases}
\end{align*}
where
\[
 w=\diag(w_0,\dots,w_n),\ \ 
 W=
 \left(
\begin{array}{c|c|c|c}
\vphantom{(w_0)_{(w_0)}^{(w_0)}}
 & \!e^{w_1\!-\!w_0}\! & &  
 \\
\hline
  &  & \  \ddots\   & \\
\hline
\vphantom{\ddots}
  & &  &  e^{w_n\!-\!w_{n\!-\!1}}\!\!\!
\\
\hline
\vphantom{(w_0)_{(w_0)}^{(w_0)}}
\!\! e^{w_0\!-\!w_n} \!\!  & &  &  \!
\end{array}
\right).
\]
Putting $x=\vert z\vert$, the  radial version
$(xw_{x})_x=2x[W^t,W]$ of (\ref{ost}) is the  compatibility condition of another linear system
\begin{equation}\label{zcc}
\begin{cases}
\Psi_\mu&=\left( -\tfrac{1}{\mu^2} xW - \tfrac{1}{\mu} xw_x + xW^t\right) \Psi\\
\Psi_x&=\left(  \tfrac1\mu W + \mu W^t\right)\Psi
\end{cases}
\end{equation}
where $\mu=\la x/z$.
The $\mu$-system here is a meromorphic linear ordinary differential equation in the complex variable $\mu$ with poles of order two at $\mu=0$ and $\mu=\infty$.  Solutions of 
$(xw_{x})_x=2x[W^t,W]$ can be interpreted as isomonodromic $x$-deformations of this system,  i.e.\ for which the monodromy data of the $\mu$-system is independent of $x$.  We refer to
\cite{GuItLiXX} for further explanation of these assertions.  

For later convenience we introduce the new variable
\[
\zeta=\la/z=\mu/x
\]
and rewrite the $\mu$-system as the following $\ze$-system:
\begin{equation}\label{ode}
\Psi_\ze =  \left( -\tfrac{1}{\ze^2} W - \tfrac{1}{\ze} xw_x + x^2W^t \right) \Psi.
\end{equation}
The monodromy data consists of collections of Stokes matrices $S^{(0)}_i$, $S^{(\infty)}_i$ at the poles $\ze=0$, $\ze=\infty$ (relating solutions on different Stokes sectors) and a connection matrix $E$ (which relates solutions near $0$ with solutions near $\infty$). 
Assigning the monodromy data to a meromorphic connection is known as the \ll direct monodromy transform\rrr.  In \cite{GuItLiXX} we computed the Stokes matrices explicitly in terms of the asymptotic data at $z=0$.

For the \ll inverse monodromy transform\rr one seeks to recover the original connection from its monodromy data, and it is well known that this can be formulated as a Riemann-Hilbert problem, which, in turn,  is equivalent to the problem of solving a linear integral equation.  There is no guarantee that such a problem can be solved easily, but the linear equation is more tractable than the original nonlinear equation, and the Riemann-Hilbert formulation is useful for obtaining asymptotic properties of solutions.

Our main achievement in this article is to formulate precisely a suitable Riemann-Hilbert problem and prove its solvability for certain monodromy data.  As a consequence, we obtain a new proof of the existence of smooth radial solutions of (\ref{ost}),({\ref{as}) on $\C^\ast$, at least for an open subset of the Stokes data.  Moreover, for all smooth radial solutions we find the corresponding connection matrices, thus completing the explicit computation of the monodromy data.   As one might expect, the connection matrix is particularly simple for these solutions.

Let us mention two further applications of this approach.

\no{\em Asymptotics: }
Consider smooth radial solutions of (\ref{ost}),({\ref{as}) on $\C^\ast$.  These must satisfy the boundary  conditions
\[
\lim_{z\to 0}  \ \frac{w_i(z)}{\log\vert z\vert}=\text{constant,}
\quad
\lim_{z\to \infty}  \ w_i(z)=0
\]
(see Appendix C). 
In \cite{GuLiXX},\cite{GuItLiXX}, we prove that these $n+1$ constants determine $w_0,\dots,w_n$ uniquely. 
It is easy to see that $w_i(z)$ decays exponentially at $\infty$, but the explicit form of this dependence is less obvious.  
For example, when $n=3$ and $l=4$, equations
(\ref{ost}),({\ref{as}) reduce to  
\begin{equation}\label{ost2}
\begin{cases}
u_{z\zbar}&= \ e^{2u} - e^{v-u} 
\\
v_{z\zbar}&= \ e^{v-u} - e^{-2v}
\end{cases}
\end{equation}
where $u=2w_0$, $v=2w_1$.  At infinity, these equations can be written in the simple form
\begin{equation}\label{ost2linear}
\begin{cases}
u_{z\zbar}&= \ 3u-v+\ \text{higher order terms}
\\
v_{z\zbar}&= \ 3v-u+\ \text{higher order terms}.
\end{cases}
\end{equation}
It is not difficult to deduce asymptotic expressions of the form
\[
u(z)\sim A \vert z\vert^{-\frac12} e^{-2\sqrt2\vert z\vert},
\quad
v(z)\sim B \vert z\vert^{-\frac12} e^{-2\sqrt2\vert z\vert}
\]
as $\vert z\vert \to\infty$.  Since we have $u(z)\sim\ga \log \vert z\vert$, 
$v(z)\sim\de \log \vert z\vert$ as $\vert z\vert \to 0$, the question arises of finding an explicit relation or \ll connection formula\rr  between
$A,B$ and $\ga,\de$.    From the purely analytic viewpoint of nonlinear p.d.e.\
this seems very difficult. 

As well as providing another approach to the p.d.e.\ existence results, the Riemann-Hilbert method answers this question (Theorem \ref{asymptoticsatinfinity}, Corollary \ref{ambiguity}). 
We note that the first connection formulae of this type were obtained by Tracy and Widom in \cite{TrWi98}.

\no{\em Monodromy criteria for smooth solutions: }
We have already explained in \cite{GuLiXX},\cite{GuLi2XX},\cite{GuItLiXX} how the tt*-Toda equations are related to geometry, in particular harmonic maps and quantum cohomology.  Here we comment on another aspect, the theory of variations of TERP structure developed by Hertling, Sabbah, and Sevenheck (\cite{He03},\cite{HeSe07},\cite{Sa08},\cite{HeSa11}).  This provides a general framework, motivated by singularity theory, for discussing criteria involving monodromy data for various properties of solutions of the equations.

A TERP structure is a
\ll twistor structure\rr (holomorphic bundle) with extra structure given by a certain extension, reality condition, and pairing.   A pure polarized TERP structure is a generalization of a pure polarized Hodge structure;  it is closely related to the concept of noncommutative Hodge structure defined in  \cite{KaKoPa07}.  Variations of TERP structures are solutions of the tt* equations.  The fundamental monodromy data here consists of a Stokes matrix $S$ and a connection matrix $E$.  It is a nontrivial and important problem to relate properties of the solution of the nonlinear p.d.e.\ with properties of $S$ and $E$, and with the underlying geometrical object (when such exists).  Because we are able to compute $S,E$ explicitly, our solutions of the tt*-Toda equations provide concrete examples where this can be done.  

More details are given in Theorem \ref{linalg} and Remark \ref{remarks}, but let us summarize briefly here the relevant properties of our solutions.  First of all we are restricting attention to solutions which have a certain cyclic symmetry - this is the Toda property.  
As we shall show that the globally smooth solutions must all have a certain distinguished connection matrix $E$,  let us fix that.
Now consider a (local) solution with this connection matrix but arbitrary Stokes matrix $S$.  When 
 
(*) the symmetric matrix $S^{-1} + S^{-t}$ is positive definite, 
 
\no by the Riemann-Hilbert approach we prove that the solution is globally smooth.  On the other hand, it follows from our p.d.e.\ approach in \cite{GuLiXX},\cite{GuItLiXX}  that a necessary and sufficient condition for the solution to be globally smooth is the weaker condition

(**) all eigenvalues of the monodromy matrix $SS^{-t}$ have unit length.

\no This is of interest as conditions (*) and (**) play a prominent role in results and conjectures concerning the smoothness of solutions associated to variations of more general TERP structures.

Finally, we mention that Mochizuki (\cite{MoXX}) has given a complete treatment of the tt*-Toda equations for general $n$ from the viewpoint of the Hitchin-Kobayashi correspondence.  This is a very special case of his deep results generalizing the theory of Corlette-Donaldson-Hitchin-Simpson to the \ll wild\rr case of irregular singularities.  

This article is organized as follows.
In section \ref{data} we review the monodromy data.  The data at $\ze=\infty$ was computed in \cite{GuItLiXX}, but we need also the
data at $\ze=0$ (which is similar) and the connection matrix (which is more difficult; we shall obtain it only after solving the Riemann-Hilbert problem).  In section \ref{inverse} we set up the Riemann-Hilbert problem.  In section \ref{asymptotic} we show that it is solvable for $x$ near infinity and give asymptotic expressions for these solutions (this argument has been sketched already by Dubrovin in \cite{Du93}).  In section \ref{vanishing}
we give conditions for solvability of the Riemann-Hilbert problem for all $x\in(0,\infty)$, and discuss properties (*), (**) above.
We give detailed calculations only in \ll case 4a\rr  (the first of the 10 cases with $n=3,4,5$ which are listed in Table 1 of \cite{GuItLiXX}).  The results for the other cases are very similar --- see Appendix B.  In fact our method works equally well for general $n$, as we shall show in \cite{GuItLi3XX}.   In section \ref{tracywidom} we discuss the relation between our results and the articles \cite{TrWi98}, \cite{Wi97} of Tracy and Widom. 

Acknowledgements: 
The first author was partially supported by JSPS grant (A) 21244004 and by Waseda University grants 2013A-868 and 2013B-083, and the second author was partially supported by NSF grant  DMS-1001777.  Both are grateful to Taida Institute for Mathematical Sciences for financial support and hospitality during their visits in 2012, when much of this work was done.   The first author is grateful to Claus Hertling for many detailed comments and for advice about the project.

\section{The monodromy data}\label{data}

We begin by giving the Stokes data for (\ref{ode}) at $\ze=\infty$ from \cite{GuItLiXX}, together with the Stokes data at $\ze=0$ (which is similar). We also define the connection matrix, which will be computed later (Theorem \ref{alldata}). 
We shall give full details in this section only for the case $n=3$, $l=4$ 
(case 4a of Table 1 of \cite{GuItLiXX}), as the other cases are very similar.

\subsection{Formal solutions}\label{formalsolutions} $ $

Let $w_0,w_1,w_2,w_3$ be a solution to 
(\ref{ost}),({\ref{as}) for $x=\vert z\vert$ on some open interval.  We write
\begin{equation}
 w=\diag(w_0,w_1,w_2,w_3),\quad
e^w=\diag(e^{w_0},e^{w_1},e^{w_2},e^{w_3})
\end{equation}
and 
\begin{equation}
W=
 \left(
\begin{array}{c|c|c|c}
\vphantom{(w_0)_{(w_0)}^{(w_0)}}
 & \!e^{w_1\!-\!w_0}\! & &  
 \\
\hline
\vphantom{(w_0)_{(w_0)}^{(w_0)}}
  &  & \!e^{w_2\!-\!w_1}\!  & \\
\hline
\vphantom{(w_0)_{(w_0)}^{(w_0)}}
  & &  &  \!e^{w_3\!-\!w_2}\!\!
\\
\hline
\vphantom{(w_0)_{(w_0)}^{(w_0)}}
\!\! e^{w_0\!-\!w_3} \!\!  & &  &  \!
\end{array}
\right).
\end{equation}
Let
\[ 
\Pi=
\bp
  & \!1\! & & \\
 & & \!1\! & \\
  & & & \!1\\
1\!   & & &
   \ep.
\]
Then we have 
$W=e^{-w} \, \Pi \, e^{w}$, $W^t=e^{w} \, \Pi^t \, e^{-w}$.
Moreover,  $\Pi= \Om \, d_4\,  \Om^{-1}$,  where
\begin{equation}
\Om=
\bp
1 & 1& 1 & 1\\
1 & \om & \om^2 & \om^3 \\
1 & \om^2 & \om^4 & \om^6 \\
1 & \om^3 & \om^6 & \om^9 
\ep,
\ \  
d_4=
\bp
1 & & &\\
 & \!\om\! & & \\
  & & \!\om^2\! & \\
  & & & \!\!\om^3
\ep,
\end{equation}
and $\om = e^{{2\pi \i}/4}=\sqrt{-1}$.   Thus the leading terms at the poles 
of (\ref{ode}) can be diagonalized as
\begin{gather*}
W = P_0 \, d_4\,  P_0^{-1},  
\ \ 
P_0 = e^{-w} \Om\\
W^t = P_\infty \, d_4\,  P_\infty^{-1},  
\ \ 
P_\infty = e^w \Om^{-1}.
\end{gather*}
We note that $\det P_0=16\, \om^3, \det P_\infty=\om/16$.
It is a classical result (see Proposition 1.1 of \cite{FIKN06}) that one obtains unique formal solutions of (\ref{ode}) at
$\ze=0,\infty$ respectively of the form
\begin{gather*}
\Psiz_f = P_0\left( I + \sum_{k\ge 1} \psiz_k \ze^k \right) e^{\frac1\ze  d_4}
\\
\Psii_f = P_\infty\left( I + \sum_{k\ge 1} \psii_k \ze^{-k} \right) e^{x^2 \ze  d_4}.
\end{gather*}
It is easily verified that no logarithmic term appears here, i.e.\ the formal monodromy is trivial.

As explained in \cite{GuItLiXX},  equation (\ref{ode}) has certain
symmetries, which can be expressed as the following properties of
$f=\Psi_\ze \Psi^{-1}$:

\no{\em Cyclic symmetry: }  $d_4^{-1} f(\om\ze) d_4 = \om^{-1} f(\ze)$

\no{\em Anti-symmetry: }   $\De f(-\ze)^t \De = f(\ze)$, \quad 
$\De=\bsp
 & & & \!1\\
 & & \!1\! & \\
 & \!1\! & &\\
 1\! & & &
\esp$

\no{\em Reality: }   $\overline{  f(\bar\ze) } = f(\ze)$

This gives the following symmetries of the formal solutions:
    
\begin{lemma}\label{formalsymmetries}   \ \ \ \ \ 

\no{Cyclic symmetry: }  
\newline
(1a) $d_4^{-1} \,\Psiz_f(\om\ze)\,\Pi^{-1}=\Psiz_f(\ze)$
\newline 
(1b) $d_4^{-1} \,\Psii_f(\om\ze)\,\Pi=\Psii_f(\ze)$

\no{Anti-symmetry: }  
\newline
(2a) $\De\, \Psiz_f(-\ze)^{-t}\,\Om\De\Om=\Psiz_f(\ze)$,\quad $\Om\De\Om=4d_4^{-1}$
\newline
(2b) $\De\, \Psii_f(-\ze)^{-t}\,(\Om\De\Om)^{-1}=\Psii_f(\ze)$

\no{Reality: }  
\newline
(3a) $\overline{\Psiz_f(\bar\ze)}\ C = \Psiz_f(\ze)$, \quad
$C=\Om \,\bar\Om^{-1}
=\bsp
1\! & & & \\
 & & & \!1\\
 & & \!1\! & \\
 & \!1\! & &
\esp$
\newline
(3b) $\overline{\Psii_f(\bar\ze)}\ C = \Psii_f(\ze)$
\end{lemma}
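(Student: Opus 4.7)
The plan is to deduce each of the six identities from the corresponding symmetry of $f=\Psi_\ze\Psi^{-1}$ by invoking the uniqueness of the normalised formal solution at an irregular singular point (Proposition~1.1 of \cite{FIKN06}). For each identity I would let $\tilde\Psi$ denote the left-hand side and verify two things: first, that $\tilde\Psi$ is again a formal solution of (\ref{ode}); second, that when written in the canonical form $P_0(I+O(\ze))e^{d_4/\ze}$ or $P_\infty(I+O(\ze^{-1}))e^{x^2\ze d_4}$, its leading factor is the identity. Uniqueness then forces $\tilde\Psi$ to coincide with the right-hand side.

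The first step is a direct differentiation in each case. For the cyclic identities, if $\Phi(\ze):=d_4^{-1}\Psi(\om\ze)\Pi^{\mp 1}$ then $\Phi_\ze=\om\,d_4^{-1}f(\om\ze)\,d_4\,\Phi=f(\ze)\Phi$ by the stated cyclic symmetry. For the anti-symmetry identities, one uses $\tfrac{d}{d\ze}\Psi(-\ze)^{-t}=f(-\ze)^{t}\Psi(-\ze)^{-t}$, and conjugation by $\De$ then converts $\De f(-\ze)^{t}\De=f(\ze)$ into $\tilde\Psi_\ze=f(\ze)\tilde\Psi$. For the reality identities, $\overline{f(\bar\ze)}=f(\ze)$ says precisely that $\overline{\Psi(\bar\ze)}$ solves the original ODE.

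The second step uses the exponential-conjugation identities $e^{d_4/(\om\ze)}=\Pi^{-1}e^{d_4/\ze}\Pi$ and $e^{x^{2}\om\ze d_4}=\Pi\,e^{x^{2}\ze d_4}\Pi^{-1}$ (both consequences of $\Pi^{-1}d_4\Pi=\om^{-1}d_4$, which itself follows from $\Pi=\Om d_4\Om^{-1}$), and their antilinear analogues, to commute the exponentials past the constant matrices appearing in $\tilde\Psi$. What remains is an identity of constant matrices, which reduces to elementary checks. For the cyclic case, the required identity is $P_0^{-1}d_4^{-1}P_0=\Om^{-1}d_4^{-1}\Om=\Pi$ (since $e^w$ is diagonal and commutes with $d_4$), and analogously at $\ze=\infty$. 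For the anti-symmetry case, the key input is the hypothesis (\ref{as}), which in case $l=4$ specialises to $w_0+w_3=0$, $w_1+w_2=0$, i.e.\ $\De w\De=-w$; combined with $\Om\De\Om=4d_4^{-1}$ this forces $4\Om^{-1}\De\Om^{-1}\,d_4^{-1}=I$ and ensures that the $e^{\pm w}$ factors in $P_0=e^{-w}\Om$ and $P_\infty=e^{w}\Om^{-1}$ cancel correctly. For the reality case, $\overline{e^{w}}=e^{w}$ together with the definition $C=\Om\bar\Om^{-1}$ does the job.

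The substantive work is purely symbolic bookkeeping; there is no analytic content beyond the classical uniqueness of normalised formal solutions at irregular singularities. The step I would verify most carefully is the anti-symmetry case, where the sign coming from transposition, the scalar $4$ in $\Om\De\Om=4d_4^{-1}$, the placement of $\De$ versus $\Om\De\Om$ on the two sides, and the hypothesis (\ref{as}) must all balance simultaneously in the leading-term comparison; everything else is straightforward manipulation of diagonal matrices and the DFT matrix $\Om$.
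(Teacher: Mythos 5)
Your proposal is correct and is exactly the argument the paper intends: the lemma is stated there without proof as a direct consequence of the symmetries of $f=\Psi_\ze\Psi^{-1}$ together with the uniqueness of the normalised formal solutions, and your verification (ODE check via the symmetry of $f$, commutation of the constant matrices past the exponential using $\Pi d_4\Pi^{-1}=\om d_4$ and $Cd_4C=d_4^{-1}$, then the leading-term identities $\Om^{-1}d_4^{-1}\Om=\Pi$, $\Om\De\Om=4d_4^{-1}$, $\De e^{w}\De=e^{-w}$, $\bar\Om\,\Om=4I$) is precisely the bookkeeping that is being suppressed. All the constant-matrix identities you invoke are correct (they are (F1)--(F7) of Appendix A), so there is nothing to add.
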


\subsection{Stokes sectors and Stokes matrices} $ $

There is a unique solution $\Psi$ of (\ref{ode}) with asymptotic expansion $\Psi_f$ on any Stokes sector, and the Stokes matrices are the constant matrices which relate solutions on adjoining Stokes sectors.  This principle is explained in detail in section 4 of \cite{GuItLiXX}, so we give only a brief list of facts here.  

As initial Stokes sectors at $\ze=0$ and $\ze=\infty$ we choose
\begin{gather*}
\Omz_1=\{ \ze\in\C^\ast \st -\tfrac{3\pi}4<\ar\ze <\tfrac\pi2\}
\\
\Omi_1=\{ \ze\in\C^\ast \st -\tfrac\pi2<\ar\ze < \tfrac{3\pi}4\}.
\end{gather*}
We identify these  initial sectors with subsets of 
the universal covering surface 
$\tilde\C^\ast$ (rather than $\C^\ast$), so that we can define further Stokes sectors in $\tilde\C^\ast$ by
\begin{equation}\label{sectors}
\begin{cases}
\ \ \Omz_{k+\frac14}= e^{-\frac\pi4\i} \Omz_k
\\
\ \ \Omi_{k+\frac14}= e^{\frac\pi4\i} \Omi_k
\end{cases}
\end{equation}
for any $k\in\tfrac14\Z$.
Then we have unique holomorphic solutions 
$\Psiz_k,\Psii_k$ on $\Omz_k, \Omi_k$ of (\ref{ode})
such that 
$\Psiz_k\sim\Psiz_f$ as $\ze\to0$ in $\Omz_k$ and
$\Psii_k\sim\Psii_f$ as $\ze\to\infty$ in $\Omi_k$. 
For each $k$, $\Psiz_k$ and $\Psii_k$ have analytic continuations to the whole of $\tilde\C^\ast$
which we continue to denote by $\Psiz_k$ and $\Psii_k$.
They have the following symmetries, which follow from Lemma \ref{formalsymmetries}.

\begin{lemma}\label{solutionsymmetries}   \ \ \ \ \ 

\no{\em Cyclic symmetry: }  
\newline
(1a) $d_4^{-1}\Psiz_{k-\frac12}(\om\ze)\Pi^{-1}=\Psiz_k(\ze)$
\newline
(1b) $d_4^{-1}\Psii_{k+\frac12}(\om\ze)\Pi=\Psii_k(\ze)$

\no{\em Anti-symmetry: }  
\newline
(2a) $\De \Psiz_{k-1}(e^{\i\pi}\ze)^{-t}\,4d_4^{-1}=\Psiz_k(\ze)$
\newline
(2b) $\De \Psii_{k+1}(e^{\i\pi}\ze)^{-t}\tfrac14 d_4 =\Psii_k(\ze)$

\no{\em Reality: } 
\newline
(3a) $\overline{  \Psiz_{\frac74-k}(\bar\ze)  } C = \Psiz_k(\ze)$
\newline
(3b) $\overline{  \Psii_{\frac74-k}(\bar\ze)  } C = \Psii_k(\ze)$
\end{lemma}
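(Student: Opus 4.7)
My plan is to derive each of the six identities in Lemma \ref{solutionsymmetries} from the corresponding formal identity of Lemma \ref{formalsymmetries}, combined with the uniqueness statement used in the preceding paragraph to define $\Psiz_k$ and $\Psii_k$: on each sector $\Omz_k$ (resp.\ $\Omi_k$) there is a unique holomorphic solution of (\ref{ode}) with asymptotic expansion $\Psiz_f$ as $\ze\to 0$ (resp.\ $\Psii_f$ as $\ze\to\infty$). For each identity I would (i) check that the right-hand side is a solution of (\ref{ode}), (ii) identify the sector on which it carries the correct formal asymptotic, and (iii) invoke uniqueness.

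For (i), a short differentiation argument shows that each of the three symmetries of $f(\ze):=\Psi_\ze\Psi^{-1}$ listed just before Lemma \ref{formalsymmetries} upgrades to a symmetry of the solution set of (\ref{ode}): if $\Psi_\ze=f\Psi$ and $M$ is any constant matrix (real in the third case), then $d_4^{-1}\Psi(\om\ze)M$ is a solution (using $d_4^{-1}f(\om\ze)d_4=\om^{-1}f(\ze)$), $\De\,\Psi(e^{\i\pi}\ze)^{-t}M$ is a solution (using $\De f(-\ze)^t\De=f(\ze)$ and $\De^2=I$), and $\overline{\Psi(\bar\ze)}\,M$ is holomorphic in $\ze$ and a solution (using $\overline{f(\bar\ze)}=f(\ze)$).

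For (ii), starting from $\Omz_{k+1/4}=e^{-\i\pi/4}\Omz_k$, $\Omi_{k+1/4}=e^{\i\pi/4}\Omi_k$ and $\om=e^{\i\pi/2}$, one reads off directly on $\tilde\C^\ast$ the sector correspondences
\[
\om\,\Omz_k=\Omz_{k-\frac12},\ \ \om\,\Omi_k=\Omi_{k+\frac12},\ \ e^{\i\pi}\Omz_k=\Omz_{k-1},\ \ e^{\i\pi}\Omi_k=\Omi_{k+1},
\]
\[
\overline{\Omz_k}=\Omz_{\frac74-k},\qquad \overline{\Omi_k}=\Omi_{\frac74-k}.
\]
These shifts $k\mapsto k'$ agree exactly with the subscripts appearing on the left-hand sides of (1a)--(3b), which is the reason those indices occur in the statement. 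Given this, each identity follows mechanically from Lemma \ref{formalsymmetries}: for instance in case (1a), as $\ze\to 0$ in $\Omz_k$ the point $\om\ze$ tends to $0$ inside $\Omz_{k-1/2}$, so
\[
d_4^{-1}\Psiz_{k-\frac12}(\om\ze)\Pi^{-1}\ \sim\ d_4^{-1}\Psiz_f(\om\ze)\Pi^{-1}\ =\ \Psiz_f(\ze)
\]
by Lemma \ref{formalsymmetries}(1a). Uniqueness of the solution on $\Omz_k$ with this formal asymptotic then forces equality with $\Psiz_k(\ze)$. The other five cases are identical in structure.

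The only non-routine point is the sector bookkeeping of (ii) in the reality case, because complex conjugation reverses orientation on $\tilde\C^\ast$. A naive angular computation in $\C^\ast$ would suggest the shift $k\mapsto 1-k$; the correct shift $k\mapsto \tfrac74-k$ emerges only once one accounts for the $\pi/4$ offset between the initial sectors $\Omz_1$ and $\Omi_1$ and for the $5\pi/4$ width of these maximal sectors on the universal cover. Once this is carried out, the remainder of the proof is essentially formal.
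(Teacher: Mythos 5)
Your proposal is correct and is exactly the argument the paper intends (the paper merely asserts that the lemma "follows from Lemma \ref{formalsymmetries}", deferring details to \cite{GuItLiXX}): each symmetry of $f=\Psi_\ze\Psi^{-1}$ shows the transformed function solves (\ref{ode}), the sector shifts $\om\,\Omz_k=\Omz_{k-\frac12}$, $\om\,\Omi_k=\Omi_{k+\frac12}$, $e^{\i\pi}\Om_k^{(0/\infty)}=\Om_{k\mp1}^{(0/\infty)}$, $\overline{\Om_k^{(0/\infty)}}=\Om_{\frac74-k}^{(0/\infty)}$ are all computed correctly, and uniqueness of the solution with the prescribed formal asymptotics on each Stokes sector (plus analytic continuation to $\tilde\C^\ast$) finishes the proof.
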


On overlapping sectors (and hence on all of $\tilde\C^\ast$) these solutions must be related by
\begin{gather*}
\Psiz_{k+\scriptstyle\frac14} = \Psiz_k \Qz_k
\\
\Psii_{k+\scriptstyle\frac14} = \Psii_k \Qi_k
\end{gather*}
for certain constant matrices $\Qz_k,\Qi_k$ (independent of $\ze$,
and also independent of $x$ because of the isomonodromy property).

The Stokes matrices 
\[
\Sz_k=\Qz_{k}\Qz_{k+\scriptstyle\frac14}\Qz_{k+\scriptstyle\frac24}
\Qz_{k+\scriptstyle\frac34},
\ \ 
\Si_k=\Qi_{k}\Qi_{k+\scriptstyle\frac14}\Qi_{k+\scriptstyle\frac24}
\Qi_{k+\scriptstyle\frac34}
\]
satisfy
$\Psiz_{k+1}=\Psiz_k \Sz_k$, $\Psii_{k+1}=\Psii_k \Si_k$, and we have $\Sz_{k+2}=\Sz_k$, $\Si_{k+2}=\Si_k$ for all $k$.    They constitute the \ll minimal presentation\rr of the Stokes data, but their factors $\Qz_k,\Qi_k$ are more convenient for describing the symmetries.

\begin{lemma}\label{stokessymmetries}   \ \ \ \ \ 

\no{Cyclic symmetry: }  
\newline
(1a) $\Qz_{k+\scriptstyle\frac12} = \Pi\,  \Qz_k \, \Pi^{-1}$
\newline
(1b) $\Qi_{k+\scriptstyle\frac12} = \Pi\,  \Qi_k \, \Pi^{-1}$

\no{Anti-symmetry: }  
\newline
(2a) $\Qz_{k+1} = 
d_4 \, \Qz_k{}^{-t}\, d_4^{-1}$
\newline
(2b) $\Qi_{k+1} = 
d_4^{-1} \, \Qi_k{}^{-t}\, d_4$

\no{Reality: }  
\newline
(3a) $\Qz_k= 
C
\barQz_{ {\scriptstyle\frac32} - k} {}^{-1}
C
$
\newline
(3b) $\Qi_k= 
C
\barQi_{ {\scriptstyle\frac32} - k} {}^{-1}
C
$
\end{lemma}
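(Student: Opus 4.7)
The plan is to derive each of the six identities by substituting the corresponding symmetry of Lemma \ref{solutionsymmetries} into the definition $\Qz_k=\Psiz_k(\ze)^{-1}\Psiz_{k+\frac14}(\ze)$ (and analogously for $\Qi_k$) and letting the $\Psi$-factors cancel. Three elementary ingredients suffice throughout: the transpose identity $(AB)^{-t}=A^{-t}B^{-t}$, the involutions $\De^2=I=C^2$, and the fact that $\Qz_k$ and $\Qi_k$ are independent of $\ze$.

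For the cyclic symmetries, I would first rewrite Lemma \ref{solutionsymmetries}(1a)\,/\,(1b) (after an index shift) as $\Psiz_{k+\frac12}(\ze)=d_4^{-1}\Psiz_k(\om\ze)\Pi^{-1}$ and $\Psii_{k+\frac12}(\ze)=d_4\,\Psii_k(\om^{-1}\ze)\Pi^{-1}$. Substituting these at both indices $k$ and $k+\tfrac14$ in the definition of $\Qz_{k+\frac12}$ (respectively $\Qi_{k+\frac12}$), the powers of $d_4$ cancel in the middle and one is left with $\Pi\,\Psi_k(\om^{\pm1}\ze)^{-1}\Psi_{k+\frac14}(\om^{\pm1}\ze)\,\Pi^{-1}=\Pi\,Q_k\,\Pi^{-1}$, which is exactly (1a)\,/\,(1b).

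For the anti-symmetries, I would solve Lemma \ref{solutionsymmetries}(2a)\,/\,(2b) for the higher index, obtaining $\Psiz_{k+1}(\ze)=\De\,\Psiz_k(-\ze)^{-t}\,4d_4^{-1}$ and $\Psii_{k+1}(\ze)=\De\,\Psii_k(-\ze)^{-t}\,\tfrac14 d_4$. Substituting these at $k$ and $k+\tfrac14$ in the definition of $Q_{k+1}$, the inner $\De\cdot\De=I$ cancels, the scalars $4$ and $\tfrac14$ combine, and one is left with a conjugation by $d_4^{\pm1}$ applied to $\Psi_k(-\ze)^t\,\Psi_{k+\frac14}(-\ze)^{-t}$. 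The transpose rule then gives $\Psi_{k+\frac14}(-\ze)^{-t}=\Psi_k(-\ze)^{-t}\,Q_k^{-t}$, which collapses the $\Psi$-part to $Q_k^{-t}$ and produces the claimed $d_4\,\Qz_k{}^{-t}\,d_4^{-1}$ at $\ze=0$ and $d_4^{-1}\,\Qi_k{}^{-t}\,d_4$ at $\ze=\infty$; the direction of conjugation differs because the $d_4$-prefactors in (2a) and (2b) are inverses of one another.

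For the reality identities, substituting Lemma \ref{solutionsymmetries}(3a)\,/\,(3b) in the form $\Psi_k(\ze)=\overline{\Psi_{\frac74-k}(\bar\ze)}\,C$ into the definition of $Q_k$ yields $Q_k=C^{-1}\,\overline{\Psi_{\frac74-k}(\bar\ze)^{-1}\,\Psi_{\frac32-k}(\bar\ze)}\,C$. Since $\Psi_{\frac74-k}=\Psi_{\frac32-k}\,Q_{\frac32-k}$, the inner ratio collapses to $Q_{\frac32-k}^{-1}$, whose complex conjugate is $\barQz_{\frac32-k}{}^{-1}$ (respectively $\barQi_{\frac32-k}{}^{-1}$); combined with $C^{-1}=C$ this is exactly (3a)\,/\,(3b). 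The main obstacle is purely bookkeeping — tracking the index shifts $k\mapsto k\pm\tfrac12,\,k\pm\tfrac14$, the placement of transposes, and the scalar factors $4,\tfrac14$ across the six cases — but there is no analytic content beyond what Lemma \ref{solutionsymmetries} already encodes, and each case reduces to a one-line cancellation once the substitutions are made systematically.
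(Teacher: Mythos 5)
Your proposal is correct and is exactly the argument the paper leaves implicit: Lemma \ref{stokessymmetries} is stated without proof, the intended derivation being precisely the substitution of the symmetries of Lemma \ref{solutionsymmetries} into the defining relations $\Psiz_{k+\frac14}=\Psiz_k\Qz_k$, $\Psii_{k+\frac14}=\Psii_k\Qi_k$, with the $\Psi$-factors cancelling at the common (shifted) argument. All six index shifts, the placement of the transposes, the scalars $4$ and $\tfrac14$, and the direction of the $d_4$-conjugation in (2a) versus (2b) check out as you describe.
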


From these symmetries and from Lemma \ref{zandi} (see below), all $\Qz_k,\Qi_k$ are determined by $\Qi_1,\Qi_{1\scriptstyle\frac14}$,  which (see \cite{GuItLiXX})  have
the form
\[
\Qi_1=
\bp
1 & & & \\
s_1 & 1 & & \\
 & & 1 & -\bar s_1 \\
  & & & 1
\ep,
\ \ 
\Qi_{1\scriptstyle\frac14}=
\bp
1 & & & \\
 & 1 & & s_2\\
 & & 1 &  \\
  & & & 1
\ep
\]
where 
$s_1=\om^{\scriptstyle\frac32} s_1^{\R}$, 
$s_2=\om^{3} s_2^{\R}$,
and $s_1^{\R},s_2^{\R}\in\R$.  A list of all $\Qi_k$ can be found in Appendix A.

\subsection{Monodromy} $ $

Each solution $\Psiz_k,\Psii_k$ has monodromy around its singular point.  It will suffice to give the monodromy matrix for $\Psii_1$.  From the triviality of the formal monodromy (section \ref{formalsolutions}), we obtain
\[
\Psii_1(e^{-2\pi\i}\ze) = \Psii_1(\ze)\Si_1\Si_2,
\]
so the monodromy matrix of  $\Psii_1$ is $\Si_1\Si_2$. By repeated application
of Lemma \ref{stokessymmetries} (1b), we have $\Si_2=\Pi^2 \Si_1 \Pi^2$ and
hence
\[
\Si_1\Si_2=(\Si_1\Pi^2)^2=
(\Qi_1 \Qi_{1\scriptstyle\frac14}\Pi)^4.
\]
The characteristic polynomial of $\Qi_1 \Qi_{1\scriptstyle\frac14}\Pi$
is $\la^4 - s_1 \la^3 - s_2 \la^2 + \bar s_1 \la - 1$.  

\subsection{Connection matrix} $ $

Although we have omitted the derivation of the formulae for the symmetries at $\ze=0$, they can be obtained immediately from the formulae at $\ze=\infty$ and the following reality condition (\cite{GuItLiXX}, section 4, Step 1).

\no{\em Loop group reality: }   
$\De 
\overline{
f 
\left(
\tfrac{1}{x^2\bar\ze  \vphantom{ T^{T^T} }  }
\right) 
}
\De = -x^2 \ze^2 f(\ze)$ 

\no Combining this with the earlier reality condition, we obtain
\[
\De 
f 
\left(
\tfrac{1}{x^2\ze  \vphantom{ T^{T^T} }  }
\right) 
\De = -x^2 \ze^2 f(\ze).
\]
This gives
$\De 
\Psii_f
\left(
\tfrac{1}{x^2\ze  \vphantom{ T^{T^T} }  }
\right) 
4d_4^{-1} = \Psiz_f(\ze)$, hence 
$\De 
\Psii_k
\left(
\tfrac{1}{x^2\ze  \vphantom{ T^{T^T} }  }
\right) 
4d_4^{-1} = \Psiz_k(\ze)$, from which
we deduce the following simple relation:

\begin{lemma}\label{zandi}
$\Qz_k=d_4 \, \Qi_k\, d_4^{-1}.$
\end{lemma}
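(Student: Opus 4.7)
The plan is to deduce the identity directly from the involution
$\De\,\Psii_k\!\left(\tfrac{1}{x^2\ze}\right) 4d_4^{-1} = \Psiz_k(\ze)$
just obtained, by feeding it into the Stokes-factor definitions $\Psiz_{k+\frac14}=\Psiz_k\Qz_k$ and $\Psii_{k+\frac14}=\Psii_k\Qi_k$. The whole statement should reduce to cancelling a common invertible prefactor.

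First I would verify the sector bookkeeping that makes the above relation meaningful for every $k$: the involution $\iota\colon\ze\mapsto\tfrac{1}{x^2\ze}$ maps $\Omz_k$ bijectively onto $\Omi_k$ for every $k\in\tfrac14\Z$. Indeed, $\iota$ reflects argument through the real axis, and the initial sectors $\Omz_1=\{-\tfrac{3\pi}4<\ar\ze<\tfrac\pi2\}$ and $\Omi_1=\{-\tfrac\pi2<\ar\ze<\tfrac{3\pi}4\}$ are already mirror images in the real axis. The recursion (\ref{sectors}) rotates $\Omz_k$ clockwise by $\pi/4$ and $\Omi_k$ counter-clockwise by $\pi/4$ as $k\mapsto k+\tfrac14$, which is exactly what $\iota$ induces on arguments. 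Thus the displayed involution, initially valid on $\Omz_1$, propagates sector by sector to all of $\tilde\C^\ast$, and holds at every half-integer index $k$.

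Next, evaluate the involution at the shifted index $k+\tfrac14$ and insert the Stokes-factor recursions. Writing $\mu=\tfrac{1}{x^2\ze}$,
\begin{align*}
\Psiz_k(\ze)\,\Qz_k
&= \Psiz_{k+\frac14}(\ze)
 = \De\,\Psii_{k+\frac14}(\mu)\cdot 4d_4^{-1} \\
&= \De\,\Psii_k(\mu)\,\Qi_k\cdot 4d_4^{-1}
 = \Psiz_k(\ze)\cdot d_4\,\Qi_k\,d_4^{-1},
\end{align*}
where in the last equality the involution is used once more in the form $\De\,\Psii_k(\mu)=\Psiz_k(\ze)\cdot\tfrac14 d_4$. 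Since $\Psiz_k(\ze)$ is an invertible fundamental solution, cancelling it on the left yields $\Qz_k=d_4\,\Qi_k\,d_4^{-1}$.

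There is essentially no obstacle here: the lemma is a short algebraic consequence of the loop group reality combined with the earlier reality condition. The only point requiring genuine care is the sector matching in the first step --- one must check that $\iota$ interchanges $\Omz_k$ with $\Omi_k$ for the \emph{same} index $k$, which is precisely why the two families in (\ref{sectors}) were set up with opposite senses of rotation.
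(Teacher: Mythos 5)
Your proposal is correct and follows exactly the route the paper intends: it takes the relation $\De\,\Psii_k\!\left(\tfrac{1}{x^2\ze}\right)4d_4^{-1}=\Psiz_k(\ze)$ (which the paper derives from the loop group reality condition and asserts at matching indices $k$), substitutes it into the defining relations $\Psiz_{k+\frac14}=\Psiz_k\Qz_k$ and $\Psii_{k+\frac14}=\Psii_k\Qi_k$, and cancels the invertible fundamental solution. The paper leaves these details to the reader, and your sector-matching check (that $\ze\mapsto\tfrac1{x^2\ze}$ carries $\Omz_k$ onto $\Omi_k$ with the same index) is precisely the point that justifies the index alignment the paper takes for granted.
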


Finally we define the connection matrices $E_k$ by
\[
\Psii_k=\Psiz_k E_k.
\]
The connection matrix $E_1$ generates all $E_k$, as it follows from $\Psii_{k+\scriptstyle\frac14} = \Psii_k \Qi_k$ that
\begin{equation}\label{connectionshift}
d_4^{-1} E_k = \Qi_{k-\frac14} {}^{-1} \, d_4^{-1} E_{k-\frac14} \, \Qi_{k-\frac14}.
\end{equation}
The symmetries of $\Psiz_k,\Psii_k$ produce the following symmetries of $E_k$:

\begin{lemma}\label{connectionsymmetries}   \ \ \ \ \ 

\no (1) \no{\em Cyclic symmetry: }  $d_4^{-1} E_k= \om \ (\Qi_k \Qi_{k+\frac14} \Pi)
\ 
d_4^{-1} E_k\  (\Qi_k \Qi_{k+\frac14} \Pi)$

\no (2) \no{\em Anti-symmetry: }   $d_4^{-1} E_k = -\tfrac1{16}(d_4^{-1}\Pi^2)\ (d_4^{-1} E_k)^{-t} \ (d_4^{-1}\Pi^2)^{-1}$  

\no (3) \no{\em Reality: }  $E_k = C \bar E_{\frac74-k} C$
\end{lemma}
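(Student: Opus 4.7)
The strategy in each case is to apply the corresponding symmetry from Lemma~\ref{solutionsymmetries} to both sides of the defining relation $\Psii_k=\Psiz_k E_k$, together with the defining relation $\Psii_{k'}=\Psiz_{k'} E_{k'}$ at a shifted index $k'$, and then cancel the invertible $\Psi$-factors.  The remaining identities in the $E_k$'s are closed up using the Stokes shift relation (\ref{connectionshift}) and the constant commutation relation $d_4^{-1}\Pi d_4 =\om\Pi$ (which in particular gives $d_4\Pi^2=-\Pi^2 d_4$ and $\Pi^2 d_4^{-1}=-d_4^{-1}\Pi^2$).

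For (3) the calculation is immediate.  Applying (3a) and (3b) to $\Psii_k(\ze)=\Psiz_k(\ze)E_k$ and using the conjugate of $\Psii_{\frac74-k}(\bar\ze)=\Psiz_{\frac74-k}(\bar\ze)E_{\frac74-k}$, the factor $\overline{\Psiz_{\frac74-k}(\bar\ze)}$ cancels; using $C^2=I$ one gets $\bar E_{\frac74-k}C=CE_k$, i.e.\ $E_k=C\bar E_{\frac74-k}C$.

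For (1), apply (1a) to $\Psiz_k(\ze)$ and (1b) to $\Psii_k(\ze)$.  Writing $\Psiz_{k+\frac12}(\om\ze)=d_4\Psiz_{k+1}(\ze)\Pi$ via (1a) with $k\to k+1$, and then $\Psiz_{k+1}=\Psiz_k\Sz_k$, one obtains $E_{k+\frac12}=\Pi^{-1}\Sz_k^{-1}E_k\Pi^{-1}$.  Iterating (\ref{connectionshift}) twice gives the independent expression $E_{k+\frac12}=d_4 A^{-1}d_4^{-1}E_k A$ with $A=\Qi_k\Qi_{k+\frac14}$.  Equating these two, converting $\Sz_k$ to $\Si_k$ via Lemma~\ref{zandi} and using $\Si_k=A\Pi A\Pi^{-1}$ together with $d_4^{-1}\Pi d_4=\om\Pi$ gives the asserted formula with $B=A\Pi=\Qi_k\Qi_{k+\frac14}\Pi$.

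For (2), apply (2a) to $\Psiz_k$ and (2b) to $\Psii_k$, together with $\Psii_{k+1}=\Psiz_{k+1}E_{k+1}$ and $\Psiz_{k+2}=\Psiz_k\Sz_k\Sz_{k+1}$ (from (2a) twice).  Cancelling the common factor $\Psiz_k(\ze)^{-t}$ leads to $E_{k+1}=\tfrac1{16}d_4(\Sz_k\Sz_{k+1})^t E_k^{-t}d_4$.  Equating this with the 4-fold iterate of (\ref{connectionshift}), namely $E_{k+1}=d_4\Si_k^{-1}d_4^{-1}E_k\Si_k$, then applying Lemma~\ref{zandi} and the identity $d_4^{-1}\Si_j^t d_4=\Si_{j+1}^{-1}$ (which follows from Lemma~\ref{stokessymmetries}~(2b) combined with $\Si_{k+2}=\Si_k$), yields
\[
Y=\tfrac1{16}\,\Si_{k+1}^{-1}\,d_4^{-1}\,Y^{-t}\,d_4\,\Si_k^{-1},\qquad Y:=d_4^{-1}E_k.
\]
To extract the clean form involving only $d_4^{-1}\Pi^2$, use the identities $\Si_k=B^2\Pi^{-2}$ and $\Si_{k+1}=\Pi^2 B^2$ (derived from $(\Qi_k\Qi_{k+\frac14}\Pi)^2=\Si_k\Pi^2$ together with cyclic symmetry).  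The commutation rules above then collapse the right-hand side to $\tfrac1{16}B^{-2}(d_4^{-1}\Pi^2)Y^{-t}(\Pi^2 d_4)B^{-2}$, and invoking the consequence $B^2YB^2=-Y$ of (1) (obtained by iterating $Y=\om BYB$ twice and using $\om^2=-1$) eliminates the $B^{\pm2}$ factors and produces exactly the assertion, with the sign coming from $\om^2$.

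The main obstacle is (2): one must juggle both $0$ and $\infty$ Stokes data, transpositions, inverses, and the twisted commutation of $d_4$ with $\Pi$, and recognise that the Stokes dependence drops out only after using the cyclic symmetry (1) to kill the extra $B^{\pm 2}$.
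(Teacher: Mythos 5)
Your proposal is correct: the paper states Lemma \ref{connectionsymmetries} without proof (asserting only that the symmetries of $\Psiz_k,\Psii_k$ ``produce'' it), and your derivation --- combining $\Psii_k=\Psiz_k E_k$ with Lemma \ref{solutionsymmetries}, the shift relation (\ref{connectionshift}), Lemma \ref{zandi}, and the commutation $\Pi d_4=\om d_4\Pi$ --- is exactly the intended route, and all the intermediate identities you invoke (in particular $\Si_{k+1}=d_4^{-1}\Si_k^{-t}d_4$, $B^2=\Si_k\Pi^2$, and the use of $B^2YB^2=-Y$ from part (1) to eliminate the $B^{\pm2}$ factors in part (2)) check out. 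The only cosmetic slip is attributing $\Psiz_{k+2}=\Psiz_k\Sz_k\Sz_{k+1}$ to ``(2a) twice'' rather than to the definition of the Stokes matrices, which does not affect the argument.
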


\no We note that $\det E_1=-1/256$ (from $\Psii_1=\Psiz_1 E_1$
and $\det \Psiz_1=\det P_0=16\,\om^3$, $\det \Psii_1=\det P_\infty=\om/16$).

\section{The Riemann-Hilbert problem}\label{inverse}

\subsection{Motivation}\label{motivation} $ $

Let us assume that $w_0(x),\dots,w_n(x)$ is a solution of (\ref{ost}),(\ref{as}) for $x=\vert z\vert$ in some  nonempty open interval.  Then we obtain holomorphic solutions $\Psiz_k,\Psii_k$ of (\ref{ode}) on the universal covering space $\tilde\C^\ast$ of $\C^\ast$, as explained in the previous section. 

Recall that $\Psiz_k$ was originally defined on the sector $\Omz_k$ and then extended to
$\tilde\C^\ast$ by analytic continuation.   Let $\fOmz_k=\pi(\Omz_k)$, where $\pi:\tilde\C^\ast\to\C^\ast$ is the covering map.  Then we obtain a holomorphic function 
$\fPsiz_k$ on $\fOmz_k$ in the obvious way, i.e.\  $\fPsiz_k(\pi(\ze))=\Psiz_k(\ze)$.
Similarly we define $\fPsii_k$ on $\fOmi_k$.  We note that
\[
\fPsiz_{k+2}=\fPsiz_k,\quad \fPsii_{k+2}=\fPsii_k
\]
for all $k$.
These functions can be used to construct a \ll sectionally-holomorphic\rr function $\fPsi$ on $\C^\ast$, whose jumps (discontinuities) along certain contours are given in terms of the matrices  $\Qz_k,\Qi_k,E_k$ (the monodromy data).   This motivates the Riemann-Hilbert problem of finding all sectionally-holomorphic functions which have the same jumps as $\fPsi$.   

In the most favourable situation,  
$\fPsi$ is determined (up to some normalization) by these jumps, i.e.\  the Riemann-Hilbert problem has an essentially unique solution.  Then,  the entire argument can be reversed: we can deduce the existence of a solution $w_0(x),\dots,w_n(x)$ of (\ref{ost}),(\ref{as}).   
Evidently the success of the method depends on choosing carefully the contours and the jumps.  Moreover, to obtain a solvable Riemann-Hilbert problem, it will be necessary to modify the above description, and this results in jumps which depend on $x$; thus, the conclusion will also depend on $x$, i.e.\ there will be some restriction on the domain of the predicted solution $w_0,\dots,w_n$.  

So far this discussion applies to any (local) solution $w_0,\dots,w_n$.  However, we are interested in recovering the solutions from \cite{GuItLiXX} which are globally defined on $(0,\infty)$.  For these we already know the Stokes data $s_1^\R,s_2^\R$ (see section 2 of \cite{GuItLiXX}), but we did not yet compute $E_1$. We shall formulate a
Riemann-Hilbert problem with arbitrary real $s_1^\R,s_2^\R$  and a certain postulated value of $E_1$, then investigate when the solutions of this Riemann-Hilbert problem produces the solutions of \cite{GuItLiXX}.   
As well as giving a completely different alternative proof of the existence of these solutions, this approach allows us to prove that the connection matrix $E_1$ is in fact the postulated one.

In order to carry out the above plan, let us choose (after some experimentation) the sectionally-holomorphic function shown in 
Fig.\ \ref{unfolded}.   
\begin{figure}[h]
\begin{center}
\includegraphics[scale=0.6, trim= 40  170  0  120]{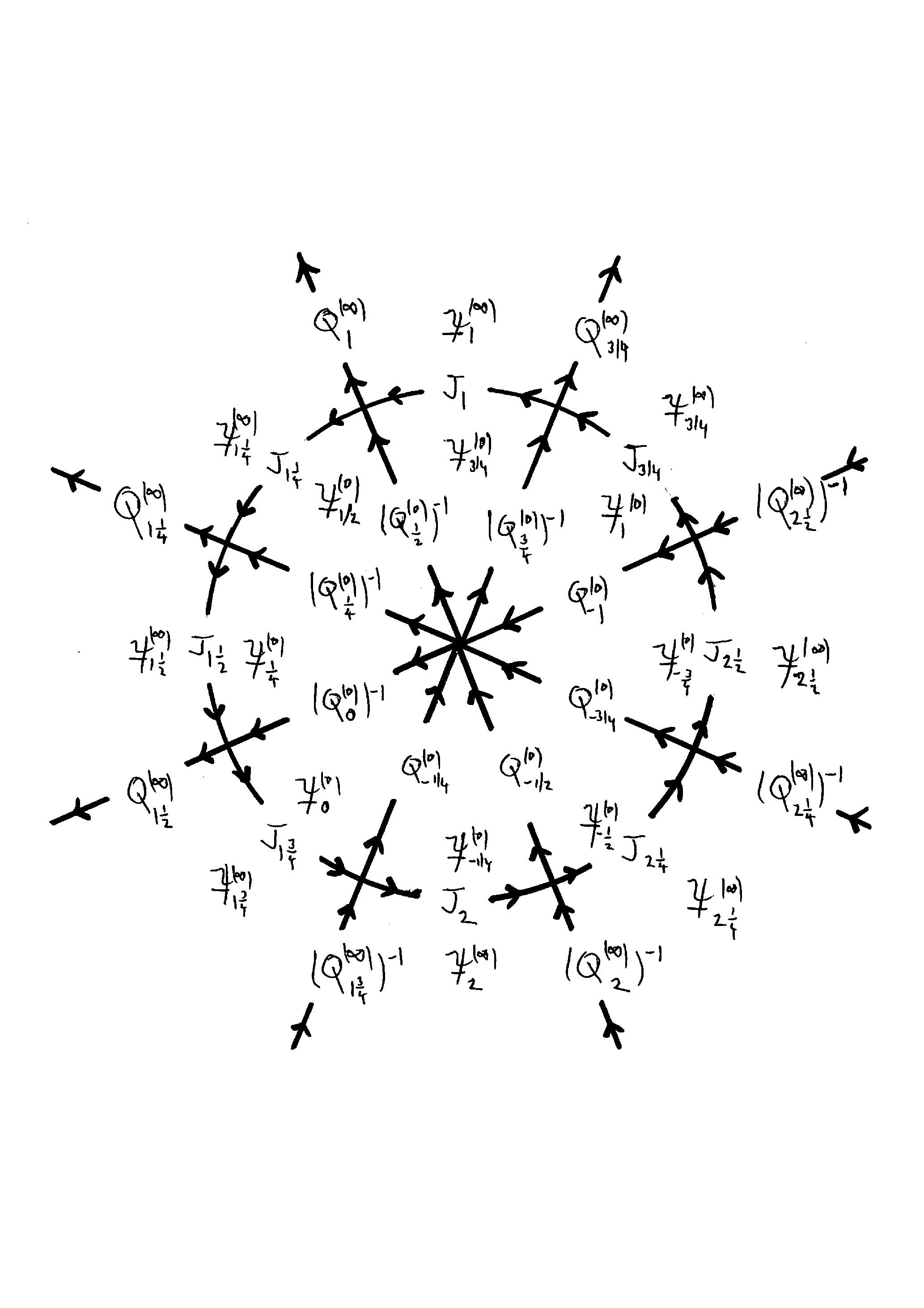}
\end{center}
\caption{The contour $\Ga_1$}\label{unfolded}
\end{figure}
The eight rays in this diagram have arguments  $\tfrac\pi8 + \tfrac\pi4\Z$.  The circle is the unit circle.   The orientations of the rays and curves are chosen arbitrarily.  We call the oriented contour $\Ga_1$.  
The jumps are defined using the convention of \cite{FIKN06}:  

\begin{definition} If 
$\fPsi_{\text{left}}$  
is defined on a region to the left of (and including) an oriented contour $\Ga$, and 
$\fPsi_{\text{right}}$ 
is defined similarly to the right, then the jump $\Xi$ is the function on $\Ga$ defined by 
$\fPsi_{\text{left}}=\fPsi_{\text{right}}\, \Xi$.   
It is assumed here that each function extends to a slightly larger open region which includes the contour.
\end{definition}
 
Away from the $\pi/8$ ray, the jumps  on the rays of the contour $\Ga_1$ in Fig.\ \ref{unfolded} follow immediately from the definition of $\Qz_k$, $\Qi_k$, as 
$\fPsii_k=\Psii_k$ and $\fPsiz_k=\Psiz_k$
here.  For $\ze$ on the outer part of the $\pi/8$ ray, we must prove that 
$\fPsi_{2\frac12}(\ze)=\fPsi_{\frac34}(\ze)(\Qi_{2\frac12})^{-1}$.   We have $\fPsii_{\frac34}(\ze)=\Psii_{\frac34}(\ze)$, but 
$\fPsii_{2\frac12}(\ze)=\Psi_{2\frac12}(e^{2\pi\i}\ze)
=\Psi_{2\frac34}(e^{2\pi\i}\ze)(\Qi_{2\frac12})^{-1} = \Psi_{\frac34}(\ze)(\Qi_{2\frac12})^{-1}$, so the jump is $(\Qi_{2\frac12})^{-1}$.  Similarly, for $\ze$ on the inner part of the $\pi/8$ ray, we have $\fPsiz_{-\frac34}(\ze)=
\Psiz_{-\frac34}(e^{2\pi\i}\ze)=\fPsiz_{-1}(e^{2\pi\i}\ze)\Qz_{-1}=
\Psiz_{1}(\ze)\Qz_{-1}=\fPsiz_{1}(\ze)\Qz_{-1}$, so the jump here is $\Qz_{-1}$, as required.  

The jumps on the segments of the circle can be expressed in terms of the connection matrices $E_k$ (hence in terms of $E_1$).  Namely, if write $\Psiz_{\frac74-k}=\Psii_k J_k$ for $k=\tfrac34,1,\dots,2\tfrac12$, then we have 
\begin{equation}\label{jumpsoncircle}
J_k=
( E_{\frac74-k} \Qi_{\frac34}\Qi_{1}\cdots\Qi_{k-\frac14} )^{-1}
\quad\text{for}\ k=1,\dots,2\tfrac12.
\end{equation}
Altogether, the jumps constitute a (piecewise-continuous) function on the contour $\Ga_1$, and we denote this function by $\Xi_1$.

\subsection{Riemann-Hilbert problem (provisional version)}\label{rh1}   $ $

Motivated by the discussion above (and by Proposition \ref{jumponcircle} below), let us pose a provisional Riemann-Hilbert problem as follows:

\no{\bf Riemann-Hilbert problem (1):}   {\em 
Let $s_1^\R,s_2^\R\in\R$. Define
matrices $\Qi_k$ as in Appendix A,
and define $\Qz_k=d_4 \Qi_k d_4^{-1}$ (as in Lemma \ref{zandi}). 
Let 
\[
E_1 = \tfrac14C \Qi_{\frac34},
\quad C=
\bsp
1\! & & & \\
 & & & \!1\\
 & & \!1\! & \\
 & \!1\! & &
\esp.
\]
Define matrices $E_k$ by formula (\ref{connectionshift}),
and define matrices $J_k$ by formula (\ref{jumpsoncircle}).
Given these matrices $\Qz_k,\Qi_k,J_k$ (which constitute $\Xi_1$),  the problem is to find a sectionally-holomorphic function (preferably unique) whose jumps on the contour $\Ga_1$ are given by the piecewise continuous function $\Xi_1$, and which have the same essential singularities at the points $\ze=0$ and $\ze=\infty$  as the formal solutions 
$\Psiz_f,\Psii_f$ respectively.
}

From the explicit form of $ \Qi_{\frac34}$, we have
\begin{equation}\label{qthreequarters}
d_4 \,\Qi_{\frac34}\, d_4^{-1} =   \Qi_{\frac34}{}^{-1} = \barQi_{\frac34}
\ \text{and}\
C \Qi_{\frac34} = \Qi_{\frac34} C.
\end{equation}
Using this, it may be verified 
that $E_k$ does in fact satisfy all the formulae of Lemma \ref{connectionsymmetries}, so it is at least a \ll valid candidate\rrr, even though we have not yet proved that it occurs as part of the monodromy data of a solution of the tt*-Toda equations.  Furthermore:

\begin{proposition}\label{jumponcircle} We have $J_k=4C$ for $k=\tfrac34,1,\dots,2\tfrac12$.
That is, all the jumps on the circle in Fig.\ \ref{unfolded} are equal to $4C$.
\end{proposition}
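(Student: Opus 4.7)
The plan is to substitute the postulated $E_1 = \tfrac14\, C\,\Qi_{\frac34}$ into formula (\ref{jumpsoncircle}), express $E_{\frac74-k}$ via the recursion (\ref{connectionshift}), and simplify using the two identities in (\ref{qthreequarters}) together with Lemma \ref{zandi}.

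I start with the base case $k=1$. Applied at $k=1$, the recursion (\ref{connectionshift}) gives $E_{\frac34} = \Qz_{\frac34}\,E_1\,\Qi_{\frac34}^{-1}$. Using Lemma \ref{zandi} and (\ref{qthreequarters}) to substitute $\Qz_{\frac34} = d_4\,\Qi_{\frac34}\,d_4^{-1} = \Qi_{\frac34}^{-1}$ and then the postulated value of $E_1$, this collapses to $E_{\frac34} = \tfrac14\,\Qi_{\frac34}^{-1}\,C$. Substituting into $J_1^{-1} = E_{\frac34}\,\Qi_{\frac34}$ and using $C\,\Qi_{\frac34} = \Qi_{\frac34}\,C$ from (\ref{qthreequarters}), the two $\Qi_{\frac34}$-factors telescope, so $J_1^{-1} = \tfrac14\,C$ and $J_1 = 4C$. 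The case $k=\tfrac34$ is an auxiliary one (not directly covered by (\ref{jumpsoncircle}) as stated): from $\Psiz_1 = \Psii_{\frac34}\,J_{\frac34}$, $\Psii_{\frac34} = \Psiz_{\frac34}\,E_{\frac34}$, and $\Psiz_1 = \Psiz_{\frac34}\,\Qz_{\frac34}$ one reads off $J_{\frac34} = E_{\frac34}^{-1}\,\Qz_{\frac34}$, and the computed value of $E_{\frac34}$ together with $\Qz_{\frac34} = \Qi_{\frac34}^{-1}$ yields $J_{\frac34} = 4C$ immediately.

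For the six remaining values $k\in\{\tfrac54,\tfrac32,\tfrac74,2,2\tfrac14,2\tfrac12\}$ I would iterate the recursion (\ref{connectionshift}) backward from $E_{\frac34}$, writing $E_{\frac74-k} = d_4\,T_m\,d_4^{-1}\,E_{\frac34}\,T_m^{-1}$ as a conjugate of $E_{\frac34}$ by a finite product $T_m$ of Stokes factors $\Qi_j$ (with $m = k-1$), then substitute into (\ref{jumpsoncircle}). The claim $J_k = 4C$ then reduces to an algebraic identity among $\Qi_k,\Qz_k,C,d_4$, which can be verified using the explicit entries of $\Qi_k$ in Appendix A together with Lemma \ref{zandi} and the reality and cyclic symmetries of Lemma \ref{stokessymmetries}. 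A cleaner alternative is to apply Lemma \ref{connectionsymmetries} directly: its reality identity forces $J_{k'} = C\,\bar J_k\,C$ under the arc-pairing $k\leftrightarrow k'$ induced by $\ze\mapsto\bar\ze$, and its cyclic identity links further arcs under $\ze\mapsto\om\ze$; since $4C$ is fixed by $X \mapsto C\,\bar X\,C$, propagating $J_1 = 4C$ around the dihedral orbit then covers all eight cases.

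The main obstacle will be the bookkeeping in this last step: one must match precisely which of the eight arcs of the circle in Fig.\ \ref{unfolded} is sent to which by $\ze\mapsto\bar\ze$ and $\ze\mapsto\om\ze$, and verify that the induced action on the jump matrices takes the expected form $J\mapsto C\,\bar J\,C$ or its cyclic counterpart. If the symmetry chain proves unwieldy, the direct iteration of (\ref{connectionshift}) is always available but tedious, since each step introduces a new $\Qi_j$-factor and the eventual collapse of the resulting long word to $\tfrac14\,C$ must be extracted from Appendix A case by case.
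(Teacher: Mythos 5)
Your base cases are correct and coincide with the paper's: you obtain $E_{\frac34}=\tfrac14\Qi_{\frac34}{}^{-1}C$ from (\ref{connectionshift}), (\ref{qthreequarters}) and Lemma \ref{zandi}, and conclude $J_{\frac34}=J_1=4C$ exactly as in the paper's verification of its induction base. The gap is in the propagation to the remaining six arcs. Your ``cleaner alternative'' leans on Lemma \ref{connectionsymmetries}, but its cyclic identity is a self-conjugation relation for a \emph{fixed} index $k$ (it relates $d_4^{-1}E_k$ to itself), not a relation between $E_k$ and $E_{k\pm\frac12}$, so it cannot move you from one arc to another. The reality identity does pair arcs, but the pairing is $k\leftrightarrow\frac74-k$, and among the eight indices $\frac34,1,\dots,2\tfrac12$ the only pair it produces is $\{\frac34,1\}$; the images of the other six fall outside the range (e.g.\ $\frac74-\frac54=\frac12$), and identifying $J_{k+2}$ with $J_k$ is not free, since $\Psii_{k+2}=\Psii_k\Si_k\Si_{k+1}$ on the universal cover. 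So the ``dihedral orbit'' as you describe it never reaches the arcs $k=\frac54,\dots,2\tfrac12$.

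The propagation device that actually works is the cyclic symmetry at the level of the solutions themselves, Lemma \ref{solutionsymmetries} (1a),(1b): $d_4^{-1}\Psii_{k+\frac12}(\om\ze)\Pi=\Psii_k(\ze)$ and $d_4^{-1}\Psiz_{k-\frac12}(\om\ze)\Pi^{-1}=\Psiz_k(\ze)$. These shift the index by $\frac12$, and together with $\Pi^{-1}\,\tfrac14C\,\Pi^{-1}=\tfrac14C$ (formula (F3)) they turn $\Psii_k=\Psiz_{\frac74-k}\,\tfrac14C$ into the same statement for $k+\frac12$; starting from your two base cases this covers all eight arcs in three steps, which is precisely the paper's proof. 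Your fallback of iterating (\ref{connectionshift}) and checking the resulting word in the $\Qi_j$'s against Appendix A would also succeed in principle, but as written it is only a plan, not a verification, and it is exactly the computation the solution-level cyclic symmetry lets you avoid.
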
 

\begin{proof} We shall prove by induction that 
$\Psii_k=\Psiz_{\frac74-k}\, \tfrac14C$.  This implies
$\fPsii_k=\fPsiz_{\frac74-k}\, \tfrac14C$, i.e.\ $J_k=(\tfrac14 C)^{-1}=4C$. 

From Lemma \ref{solutionsymmetries} we have  $d_4^{-1}\Psii_{k+\frac12}(\om\ze)\Pi=\Psii_k(\ze)$ (by part (b)) 
$=\Psiz_{\frac74-k}(\ze)\, \tfrac14C$ (induction hypothesis) 
$=d_4^{-1}\Psiz_{\frac54-k}(\om\ze)\Pi^{-1}\, \tfrac14C$ (by part (a)).
Hence 
$\Psii_{k+\frac12}(\om\ze)=
\Psiz_{\frac54-k}(\om\ze)\Pi^{-1}\, \tfrac14C\Pi^{-1}
=\Psiz_{\frac54-k}(\om\ze)\, \tfrac14C$ (by formula (F3), Appendix A). This is the result for $k+\frac12$.

To start the induction we check the cases $k=\frac34,1$. For $k=\frac34$ we have $\Psii_{\frac34} \Qi_{\frac34} = \Psii_1 = \Psiz_1 E_1=
\Psiz_1\,\frac14 C  \Qi_{\frac34}$, so $\Psii_{\frac34}= \Psiz_1\,\frac14C$.
For $k=1$ we have $\Psii_1=\Psii_{\frac34} \Qi_{\frac34} =
\Psiz_{\frac34} E_{\frac34} \Qi_{\frac34}$ and this is
$\Psiz_{\frac34} d_4 \Qi_{\frac34} d_4^{-1} E_1$ by formula (\ref{connectionshift}).  Formula (\ref{qthreequarters}) gives 
$d_4 \,\Qi_{\frac34}\, d_4^{-1} =   (\Qi_{\frac34})^{-1}$, so we have
$\Psii_1=\Psiz_{\frac34}  (\Qi_{\frac34})^{-1} \Qi_{\frac34} \, \tfrac14C
=\Psiz_{\frac34}  \, \tfrac14C$, as required.
\end{proof} 

This allows us to simplify
the Riemann-Hilbert problem.  Namely, if we replace 
$\fPsiz_k$ by $\fPsiz_k 4C$ in Fig.\ \ref{unfolded}, then all jumps on the circle become the identity matrix --- there is in fact no discontinuity across the circle.  This suggests a new Riemann-Hilbert problem based on Fig.\ \ref{simplified}. The contour $\Ga_2$ is obtained from the contour $\Ga_1$ of Fig.\ \ref{unfolded}
by removing the circle. The piecewise continuous function giving the indicated jumps on the contour $\Ga_2$ will be denoted $\Xi_2$. 
\begin{figure}[h]
\begin{center}
\includegraphics[scale=0.6, trim= 40  160  0  120]{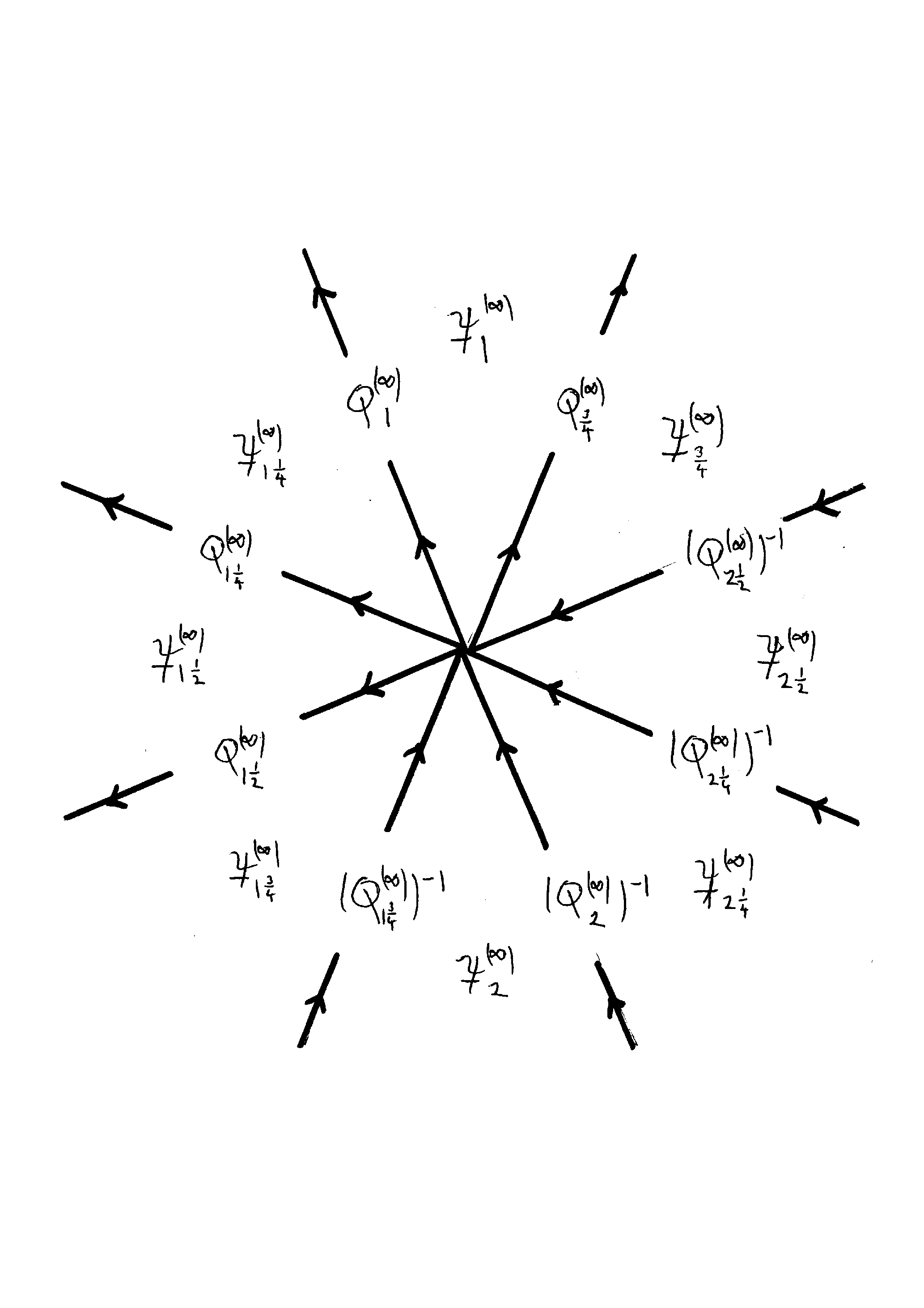}
\end{center}
\caption{The simplified contour $\Ga_2$}\label{simplified}
\end{figure}

\subsection{The Riemann-Hilbert problem}\label{rh2} $ $

From now on we shall reformulate the Riemann-Hilbert problem in the manner of Chapter 3 
of \cite{FIKN06}, in order to use the criteria for solvability given there.  That is, given 

(i) an oriented contour $\Ga$ (possibly with nodes, i.e.\ points of self-intersection), and

(ii) a map $G:\Ga\to \textrm{GL}_n\C$ (the space of invertible complex $n\times n$ matrices),

\no we seek a holomorphic map $Y:\C-\Ga\to \textrm{GL}_n\C$ such that
\[
Y_{\text{left}}=Y_{\text{right}} G\quad\text{and}\quad Y\to I \ \text{as}\ \ze\to\infty,
\]
where $Y_{\text{left}}$ and $Y_{\text{right}}$ are the (pointwise) limits of $Y$ from the left and right sides of $\Ga$, respectively.  We shall require that $(G,\Ga)$ satisfy the following additional conditions (pages 102/3 of \cite{FIKN06}):

(1) $\Ga$ is a finite union of smooth components $\Ga^{(i)}$, and each $G^{(i)}=G\vert_{\Ga^{(i)}}$ admits an analytic continuation in a neighbourhood of $\Ga^{(i)}$,

(2)  $G(\ze)$ approaches $I$ exponentially as $\ze\to\infty$ along any infinite component of $\Ga$,

(3) $\det G(\ze)=1$, and

(4) at any node of $\Ga$,  for which the intersecting components are 
$\Ga^{(i_1)},\dots,\Ga^{(i_N)}$ (listed in anticlockwise order around the node),  we have
$(G^{(i_1)})^{\eps_1}\dots(G^{(i_N)})^{\eps_N}=I$, where $\eps_i$ is $+1$ or $-1$ according to whether $\Ga^{(i)}$ is oriented outwards or inwards at that point.

As in the case of the provisional problem above, we shall investigate first the expected properties of the solution, then formulate a Riemann-Hilbert problem based on those properties.  Let us consider the situation of Fig.\ \ref{simplified}.
As $\ze\to\infty$, from the definition of $\Psii_k$, we have 
$P_\infty^{-1}\Psii_k e^{-x^2\ze d_4}\to I$.  On the other hand, 
as $\ze\to 0$, by using Lemma \ref{jumponcircle}, we obtain
\[
\Psii_k=\Psiz_{\frac74-k}\ \tfrac14C\sim 
P_0 (I + O(\ze)) e^{\frac1\ze d_4} \tfrac14C
= \tfrac14P_0 C (I + O(\ze)) e^{\frac1\ze d_4^{-1}}.
\]
Here we have used $d_4 C = C d_4^{-1}$ (formula (F4), Appendix A).

This suggests that we introduce
\[
Y_k(\ze,x)=P_\infty^{-1}\ \fPsii_k (\ze,x)\,  e^{-x^2 \ze  d_4 -\frac1\ze  d_4^{-1}}
\]
as an appropriate modification of $\fPsi_k$. Then the jumps
$G_2$ on the contour $\Ga_2$ are given by $G_2=e\, \Xi_2\, e^{-1}$, where $e(\ze,x)=e^{x^2 \ze  d_4 +\frac1\ze  d_4^{-1}}$.  We note that $Y_k$ is defined on the sector $\fOmi_k$, but for the purposes of Fig.\ \ref{simplified} we shall consider its restriction to a smaller sector of angle $\pi/4$. We have $Y_{k+2}=Y_k$ for all $k$. 

The following problem will be our main focus.

\no{\bf Riemann-Hilbert problem (2):}  {\em 
Let $s_1^\R,s_2^\R\in\R$.  Define
matrices $\Qi_k$ as in the previous section.
For these matrices,  find a sectionally-holomorphic function $Y=\{Y_k\}$,
such that $Y\to I$ as $\ze\to\infty$, 
whose jumps on the contour $\Ga_2$ are given by $G_2=e\,\Xi_2\, e^{-1}$,
where 
$e(\ze,x)=e^{x^2 \ze  d_4 +\frac1\ze  d_4^{-1}}$ and
$\Xi_2$ is as shown in Fig.\ \ref{simplified}.   
}

By construction, conditions (1)-(4) are satisfied.  We shall also need $G_2\to I$ as $\ze\to 0$, so let us verify this next.
Let $\ze=k e^{\i\th}$, $k>0$, be any ray of the contour $\Ga_2$.  On this ray the function $\Xi_2$ is a constant matrix, and the $(i,j)$ entry of
$G_2=e \,\Xi_2\, e^{-1}$ is
\[
(G_2(\ze,x))_{ij}=
\begin{cases}
(\Xi_2)_{ij}\ e^{
\left( 
\frac{2}k e^{\i \boxe}  \ +\    2 k x^2 e^{-\i \boxe}
\right)
}
\ \ \text{if $i-j$ is even}
\\
(\Xi_2)_{ij}\ e^{
\left( 
\frac{\sqrt 2}k e^{\i \boxe}  \ +\    \sqrt 2 k x^2 e^{-\i \boxe}
\right)
}
\ \ \text{if $i-j$ is odd}
\end{cases}
\]
where the boxed angle (which depends on $(i,j)$) is indicated in the diagram below:
\[
\begin{array}{|c|c|c|c|}
\hline
\vphantom{\dfrac12}
0 & -\th+\tfrac\pi4 &  -\th+0&   -\th-\tfrac\pi4
\\
\hline
\vphantom{\dfrac12}
 -\th+\tfrac{5\pi}4  & 0 &  -\th-\tfrac{\pi}4 &  -\th-\tfrac{\pi}2
  \\
\hline
\vphantom{\dfrac12}
 -\th+\pi &  -\th+\tfrac{3\pi}4 & 0 &    -\th-\tfrac{3\pi}4
\\
\hline
\vphantom{\dfrac12}
 -\th+\tfrac{3\pi}4 & -\th+\tfrac{\pi}2  &  -\th+\tfrac{\pi}4&  0
\\
\hline
\end{array}
\]
From the list of matrices $\Qi_k$ in Appendix A, we see that different rays contribute to different entries of $\Xi_2$.  In the following two diagrams we list 
these contributions
and the angle $\th$  of the corresponding ray, respectively:
\[
\begin{array}{|c|c|c|c|}
\hline
\vphantom{\dfrac{A}{A_A}}
\hphantom{\!\Qi_2{}^{-1}\!\!} & -\om^{\frac12} s^\R_1  &  -\om^3 s^\R_2&  \om^{\frac32} s^\R_1
\\
\hline
\vphantom{\dfrac{A}{A_A}}
 \om^{\frac32} s^\R_1&  &  \om^{\frac12} s^\R_1 & \om^3 s^\R_2
  \\
\hline
\vphantom{\dfrac{A}{A_A}}
 \om^3 s^\R_2 &  -\om^{\frac32} s^\R_1 &  &   \om^{\frac12} s^\R_1
\\
\hline
\vphantom{\dfrac{A}{A_A}}
 -\om^{\frac12} s^\R_1&  -\om^3 s^\R_2 &  -\om^{\frac32} s^\R_1 &  
\\
\hline
\end{array}
\ \ \ 
\begin{array}{|c|c|c|c|}
\hline
\vphantom{\dfrac{A}{A_A}}
\hphantom{\!\Qi_2{}^{-1}\!\!}& -\frac{3\pi}8 & -\frac{5\pi}8&  -\frac{7\pi}8
\\
\hline
\vphantom{\dfrac{A}{A_A}}
\frac{5\pi}8  & \hphantom{\!\Qi_2{}^{-1}\!\!} &-\frac{7\pi}8  & -\frac{9\pi}8
  \\
\hline
\vphantom{\dfrac{A}{A_A}}
\frac{3\pi}8 &  \frac{\pi}8 & \hphantom{\!\Qi_2{}^{-1}\!\!} &  -\frac{11\pi}8
\\
\hline
\vphantom{\dfrac{A}{A_A}}
\frac{\pi}8 & -\frac{\pi}8 & -\frac{3\pi}8&  \hphantom{\!\Qi_2{}^{-1}\!\!}
\\
\hline
\end{array}
\]
For example, $\Qi_{1\frac14}$ is the jump at the $7\pi/8$ (=$-9\pi/8$) ray, 
and it contributes the $(2,4)$ entry $\om^3 s_2^\R$.

When the value of $\th$ in the third diagram is substituted into the first diagram,
the resulting (off-diagonal) angle is $5\pi/8$ {\em in all cases.}
Hence we obtain the following explicit expression for the jump function $G_2$:
\begin{equation}\label{Gtwo}
G_2(\ze,x)=
\left(
\begin{array}{c|c|c|c}
\vphantom{\dfrac{A}{A_A}}
\hphantom{\!\Qi_2{}^{-1}\!\!} 1 & -\om^{\frac12} s^\R_1 \epr  &  -\om^3 s^\R_2 \eprr &  \om^{\frac32} s^\R_1  \epr
\\
\hline
\vphantom{\dfrac{A}{A_A}}
 \om^{\frac32} s^\R_1 \epr & 1 &  \om^{\frac12} s^\R_1 \epr & \om^3 s^\R_2 \eprr
  \\
\hline
\vphantom{\dfrac{A}{A_A}}
 \om^3 s^\R_2 \eprr &  -\om^{\frac32} s^\R_1 \epr &  1 &   \om^{\frac12} s^\R_1 \epr
\\
\hline
\vphantom{\dfrac{A}{A_A}}
 -\om^{\frac12} s^\R_1 \epr &  -\om^3 s^\R_2 \eprr &  -\om^{\frac32} s^\R_1 \epr &  1
\\
\end{array}
\right)
\end{equation}
where 
\[
\epr=e^{ \sqrt 2\left(\tfrac{d}{k} + x^2 \tfrac{k}{d}  \right) },  \
\eprr=e^{  2\left(\tfrac{d}{k} + x^2 \tfrac{k}{d}  \right) }, \ 
d=e^{5\pi\i/8} = \cos \tfrac{5\pi}8 + \i \sin \tfrac{5\pi}8.
\]
Since $\cos 5\pi/8<0$, we have $\lim_{k\to 0} \epr = \lim_{k\to 0} \eprr = 0$,
and hence
 $\lim_{k\to 0} G_2=I$, as required.  

Evidently there is some flexibility in the choice of contours $\Ga_1,\Ga_2$ here.  
The domain of 
$\fPsii_k$ in Fig.\ \ref{unfolded} is 
$( (8k-5)\tfrac\pi8,(8k-3)\tfrac\pi8 )$, and the original domain of definition of $\Psii_k$ is
$( (8k-12)\tfrac\pi8,(8k-2)\tfrac\pi8 )$, so the jumps are unchanged if we rotate the
contours $\Ga_1,\Ga_2$ by any angle $\al$ with $-7\pi/8<\al<\pi/8$.  
In the preceding calculation, such a rotation would also result in a negative cosine, so
the problem is still well-posed.
\begin{figure}[h]
\begin{center}
\includegraphics[scale=0.6, trim= 40  160  0  120]{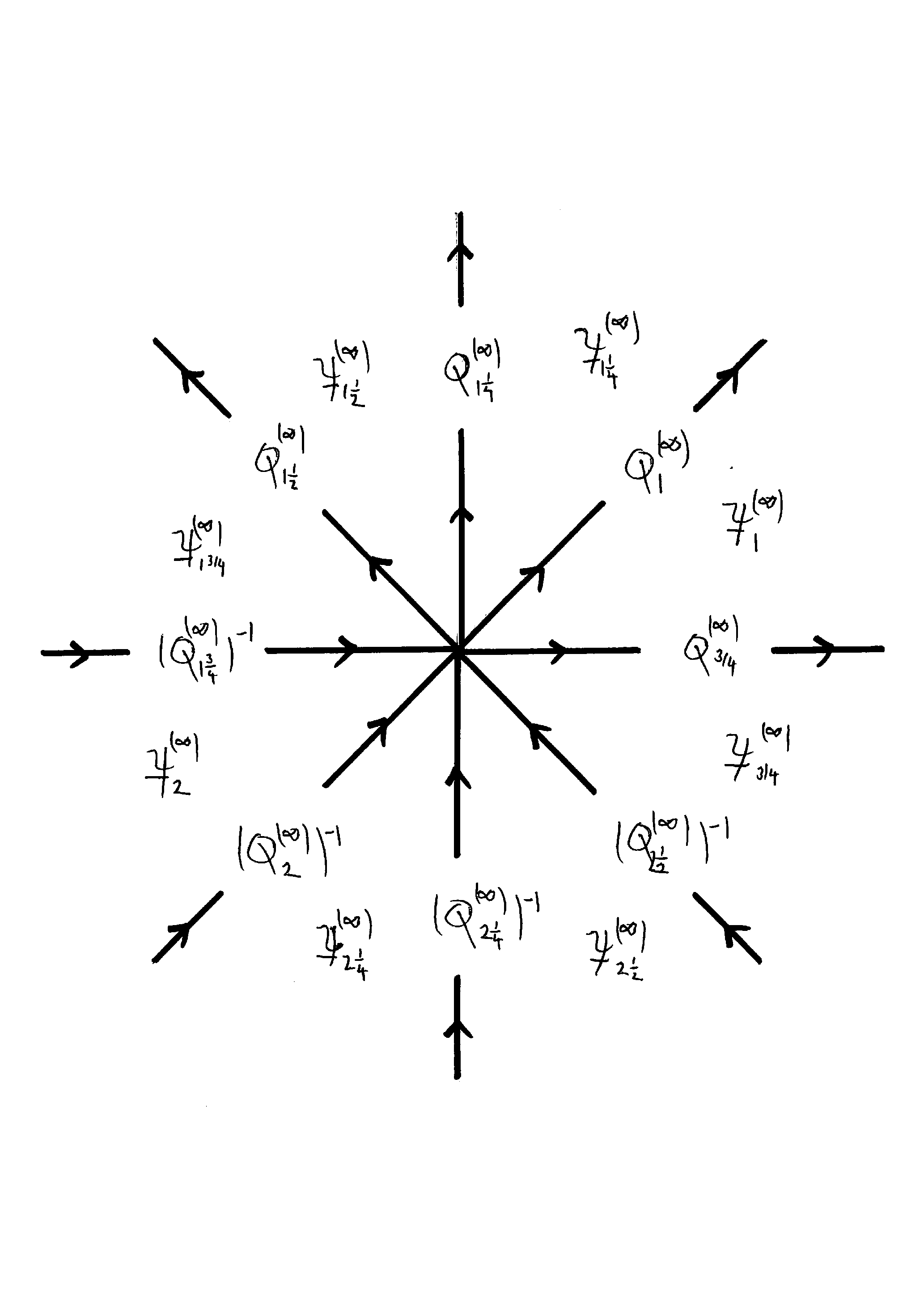}
\end{center}
\caption{The simplified and rotated contour $\Ga^\prime_2$}\label{simplifiedrotated}
\end{figure}

For example, if we take $\al=-3\pi/8$, i.e.\ replace $\th$ by $\th-3\pi/8$, then all the angles in the first diagram become $\pi$.  Then we have $c=e^{\i\pi}=-1$, and $\epr,\eprr$
become real-valued.   This will be convenient for future calculations, so let us do this.  We denote the rotated contour by $\Ga^\prime_2$ (Fig.\ \ref{simplifiedrotated}), and state the corresponding
Riemann-Hilbert problem:

\no{\bf Riemann-Hilbert problem (2)${\,}^\prime$:}  {\em 
Let $s_1^\R,s_2^\R\in\R$.  Define
matrices $\Qi_k$ as in the previous section.
For these matrices,  find a sectionally-holomorphic function $Y=\{Y_k\}$,
such that $Y\to I$ as $\ze\to\infty$, 
whose jumps on the contour $\Ga^\prime_2$ are given by $G^\prime_2=e\, \Xi^\prime_2\, e^{-1}$,
where 
$e(\ze,x)=e^{x^2 \ze  d_4 +\frac1\ze  d_4^{-1}}$ and
$\Xi^\prime_2$ is as shown in Fig.\ \ref{simplifiedrotated}. 
}

Let us spell this out in more detail.  We seek holomorphic (invertible matrix-valued) functions 
$\{Y_k\st k\in\tfrac14\Z \}$ with $Y_{k+2}=Y_k$ such that

(1) $\lim_{\ze\to\infty}Y_k=I$

(2) the domain of $Y_k$ is an open subset of $\C^\ast$ containing the sector
$(k-1)\pi \le \arg\ze \le (k-\tfrac34)\pi$

(3) on some open subset of $\C^\ast$ containing the ray $\arg\ze=(k-1)\pi$ we have $Y_k(\ze)=Y_{k-\frac14}(\ze) \ e(\ze) \Qi_{k-\frac14} e(\ze)^{-1}$

\no(we abbreviate
$Y_k(\ze,x)$, $e(\ze,x)$ to $Y_k(\ze)$, $e(\ze)$ here and in section \ref{relation}).

Because $G\to I$ as $\ze\to 0$,  the limit $\lim_{\ze\to 0}Y_k$ must exist, independently of $k$. Let us denote its value by $Y(0)$.

\subsection{Relation between the p.d.e.\ and the Riemann-Hilbert problem}\label{relation} $ $

Before attempting to solve the Riemann-Hilbert problem, we have to verify that this would in fact produce a solution of the original equations (\ref{ost}),(\ref{as}).  We shall approach this by considering the symmetries of the Riemann-Hilbert problem.

\begin{proposition} Assume that $Y$ is a solution of the Riemann-Hilbert problem with contour $\Ga^\prime_2$ and jump function $G^\prime_2$, with $Y\vert_{\ze=\infty}=I$. Then $Y$ is unique, and it has the following symmetries:

\no{\em Cyclic symmetry: }  $\Pi^{-1}\ Y_{k+\scriptstyle\frac12}(\om\ze)\  \Pi = Y_k(\ze)$

\no{\em Anti-symmetry: }   $d_4^{-1}\  Y_{k+1}(-\ze)^{-t}\  d_4= Y_k(\ze)$

\no{\em Reality: }   $C \ \overline{  Y_{\scriptstyle\frac74 -k}(\bar\ze) }\ C = Y_k(\ze)$
\end{proposition}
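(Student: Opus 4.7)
The plan is to establish uniqueness first by a standard Liouville argument and then prove each symmetry by showing that the candidate symmetric object solves the same Riemann-Hilbert problem, forcing equality via uniqueness.

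For uniqueness, suppose $Y,\tilde Y$ are two solutions. Then $Z:=Y\tilde Y^{-1}$ satisfies $Z_{\text{left}}=Y_{\text{right}}G\,G^{-1}\tilde Y_{\text{right}}^{-1}=Z_{\text{right}}$ across every smooth piece of $\Ga^\prime_2$, so $Z$ has no jumps and extends holomorphically across $\Ga^\prime_2\setminus\{0\}$. Near $\ze=0$ both $Y_k$ and $\tilde Y_k$ have well-defined limits (since $G^\prime_2\to I$ as $\ze\to 0$, as verified in the excerpt using $\cos(5\pi/8)<0$), so $Z$ is bounded at the origin and extends to an entire function. Since $Z\to I$ at infinity, Liouville's theorem yields $Z\equiv I$.

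For each symmetry, define a new sectionally-holomorphic function $\tilde Y=\{\tilde Y_k\}$ by the right-hand side of the claimed identity, and verify that it solves the same Riemann-Hilbert problem as $Y$. By uniqueness, $\tilde Y=Y$. The verifications split into three checks: (i) normalization at $\infty$, (ii) invariance of the contour $\Ga^\prime_2$, and (iii) invariance of the jump matrix $G^\prime_2=e\,\Xi^\prime_2\,e^{-1}$. For normalization one just uses $\Pi^{-1}I\Pi=I$, $d_4^{-1}I^{-t}d_4=I$, and $CIC=I$ (noting $C^2=I$). For contour invariance one checks that the set of rays at angles $-\tfrac\pi4+\tfrac\pi4\Z$ is preserved by $\ze\mapsto\om\ze$, $\ze\mapsto-\ze$, and $\ze\mapsto\bar\ze$ respectively. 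For the jump invariance one combines the symmetries of $\Qi_k$ in Lemma \ref{stokessymmetries} with the corresponding conjugation identities for $e(\ze,x)=e^{x^2\ze d_4+\ze^{-1}d_4^{-1}}$: namely $\Pi^{-1}e(\om\ze)\Pi=e(\ze)$ (which reduces to $\Pi^{-1}d_4\Pi=\om^{-1}d_4$), $d_4^{-1}e(-\ze)^{-t}d_4=e(\ze)$, and $C\,\overline{e(\bar\ze)}\,C=e(\ze)$ (the last uses $CdC=d^{-1}$, formula (F4) of Appendix A). With these identities in hand, each symmetry reduces to checking a single matching between jumps on corresponding rays, using the appropriate relation from Lemma \ref{stokessymmetries}, part (1b) for cyclic, (2b) for anti-symmetry, and (3b) for reality.

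The main bookkeeping obstacle is the anti-symmetry, since $\ze\mapsto-\ze$ together with transpose-inverse reverses the orientation of every ray, so the relation $(Y^{-t})_{\text{left}}=(G^{-1})^t(Y^{-t})_{\text{right}}$ must be composed with the orientation flip; the resulting jump matrix on the reversed ray must be identified with $G^\prime_2$ on the original ray via Lemma \ref{stokessymmetries}(2b), i.e.\ $\Qi_{k+1}=d_4^{-1}\Qi_k{}^{-t}d_4$. The other two symmetries are more direct: cyclic uses $\Pi\,\Qi_k\,\Pi^{-1}=\Qi_{k+1/2}$ together with the rotation $\ze\mapsto\om\ze$ which shifts the sector index by $1/2$, and reality uses $C\,\bar\Qi_{3/2-k}{}^{-1}C=\Qi_k$ together with the orientation reversal from complex conjugation (which cancels against the inversion in Lemma \ref{stokessymmetries}(3b)).
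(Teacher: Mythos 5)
Your proposal is correct and follows essentially the same route as the paper: uniqueness via the normalization at infinity (the Liouville argument you spell out is the standard justification the paper leaves implicit), and then each symmetry by checking that the transformed function solves the same Riemann--Hilbert problem, reducing the jump matching to the conjugation identities for $e(\ze,x)$ (via (F4), (F7)) together with parts (1b), (2b), (3b) of Lemma \ref{stokessymmetries}. The only cosmetic difference is that you phrase the anti-symmetry and reality checks in terms of orientation reversal of rays, whereas the paper's sector-indexed formulation absorbs this into the index shifts $k\mapsto k+1$ and $k\mapsto\tfrac74-k$; the substance is identical.
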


It is easily verified that $Y$ would inherit these properties from $\fPsi$, if 
$Y$ was obtained from $\fPsi$ as in section \ref{rh1}.  To prove the proposition we have to show that they follow from the properties of $G^\prime_2$ alone.

\begin{proof} Uniqueness follows immediately from holomorphicity and the normalization
$Y\vert_{\ze=\infty}=I$.  
We begin with the cyclic symmetry. Consider the $(k-1)\pi$ ray. Here we have 
$Y_k(\ze)=Y_{k-\frac14}(\ze) e(\ze) \Qi_{k-\frac14} e(\ze)^{-1}$.  We claim that
$\Pi^{-1}Y_{k+\scriptstyle\frac12}(\om\ze) \Pi=
\Pi^{-1}Y_{k+\scriptstyle\frac14}(\om\ze) \Pi
\  e(\ze) \Qi_{k-\frac14} e(\ze)^{-1}$ (when $\arg\ze=(k-1)\pi$). 
 Then $\{Y_k\st k\in\tfrac14\Z \}$ and $\{ \Pi^{-1}Y_{k+\scriptstyle\frac12}(\om\ze)\Pi \st k\in\tfrac14\Z \}$ solve the Riemann-Hilbert problem, and they both take the value $I$ at $\ze=\infty$, hence they are equal.
 
 To prove the claim, we compare $Y_{k+\scriptstyle\frac12}(\om\ze)$, $Y_{k+\scriptstyle\frac14}(\om\ze)$ for $\arg\ze=(k-1)\pi$.   By assumption, we have 
 $Y_{k+\scriptstyle\frac12}(\om\ze)=Y_{k+\scriptstyle\frac14}(\om\ze) e(\om\ze) \Qi_{k+\scriptstyle\frac14}  e(\om\ze)^{-1}$.  Thus, we have to show that
 $\Pi^{-1} e(\om\ze) \Qi_{k+\scriptstyle\frac14}  e(\om\ze)^{-1} \Pi = 
  e(\ze) \Qi_{k-\frac14} e(\ze)^{-1}$, i.e.\ 
  $\Qi_{k+\scriptstyle\frac14}=e(\om\ze)^{-1} \Pi e(\ze) 
  \Qi_{k-\scriptstyle\frac14} \left( e(\om\ze)^{-1} \Pi e(\ze)  \right)^{-1}$.
 As $\Pi^{-1} d_4 \Pi = -\om d_4$ (formula (F7), Appendix A) we obtain
 $e(\om\ze)^{-1} \Pi = e^{-x^2 \om \ze d_4 - \frac1{\om\ze} d_4^{-1}   } \Pi = 
 \Pi e^{  -x^2  \ze d_4 - \frac1{\ze} d_4^{-1}  } = \Pi e(\ze)$.  Thus
 $e(\om\ze)^{-1} \Pi e(\ze) =\Pi$, and we have to show that
 $\Qi_{k+\scriptstyle\frac14}=\Pi \Qi_{k-\scriptstyle\frac14} \Pi^{-1}$.  But this is 
 formula (1b) of Lemma \ref{stokessymmetries},  the cyclic symmetry of $\Qi_k$.

To verify the anti-symmetry property, we 
need
$d_4^{-1} Y_{k+1}(-\ze)^{-t} d_4= 
d_4^{-1} Y_{k+\scriptstyle\frac34}(-\ze)^{-t} d_4 \ e(\ze) \Qi_{k-\frac14} e(\ze)^{-1}$
on the ray $\arg\ze =(k-1)\pi$. By assumption, we have 
$Y_{k+1}(-\ze)=Y_{k+\scriptstyle\frac34}(-\ze) e(-\ze) \Qi_{k+\scriptstyle\frac34}  e(-\ze)^{-1}$.  Thus, we have to show that 
$d_4^{-1}\left(  e(-\ze) \Qi_{k+\scriptstyle\frac34}  e(-\ze)^{-1} \right)^{-t} d_4
=e(\ze) \Qi_{k-\frac14} e(\ze)^{-1}$.  
Since $d_4$ commutes with $e$, and $e(-\ze)=e(\ze)^{-1}$, we have to show
$d_4^{-1} (\Qi_{k+\scriptstyle\frac34})^{-t} d_4 = \Qi_{k-\scriptstyle\frac14}$.
 But this is 
 formula (2b) of Lemma \ref{stokessymmetries},  the anti-symmetry property of $\Qi_k$.
 
The reality property can be established in the same way.  We have to show that 
$C  \overline{  Y_{\frac74 -k}(\bar\ze) } C = 
C  \overline{  Y_{\frac84 -k}(\bar\ze) } C  \ 
e(\ze) \Qi_{k-\frac14} e(\ze)^{-1}$ on the ray $\arg\ze =(k-1)\pi$. By assumption, we have 
$  Y_{\frac84 -k}(\bar\ze) =
Y_{\frac74 -k}(\bar\ze) \ 
 e(\bar\ze) \Qi_{\scriptstyle\frac74 -k} e(\bar\ze)^{-1}  $.
 Thus, we have to show that 
 $C  \overline{ e(\bar\ze)} \barQi_{\frac74 -k} {}^{-1} \overline{ e(\bar\ze)}^{-1}  C = e(\ze) \Qi_{k-\frac14} e(\ze)^{-1}$, i.e.\
$\Qi_{k-\frac14} = e(\ze)^{-1} C \overline{ e(\bar\ze)}\ 
\barQi_{\frac74 -k} {}^{-1} \left(  e(\ze)^{-1} C \overline{ e(\bar\ze)} \right)^{-1}$. As $Cd_4 C = d_4^{-1}$ (formula (F4), Appendix A) we obtain
$e(\ze)^{-1} C \overline{ e(\bar\ze)} = C$, so we have to show that
$\Qi_{k-\frac14} = C
\barQi_{\frac74 -k} {}^{-1} C$.
But this is  formula (3b) of Lemma \ref{stokessymmetries},  the reality property of $\Qi_k$.
\end{proof}
 
\begin{corollary}  If $Y$ is as in the proposition, then the matrix $Y(0)=Y\vert_{\ze=0}$  has the following symmetries:

\no{\em Cyclic symmetry: }  $\Pi^{-1}Y(0)\Pi = Y(0)$

\no{\em Anti-symmetry: }   $d_4^{-1} Y(0)^{-t}  d_4= Y(0)$

\no{\em Reality: }   $C \bar Y(0) C = Y(0)$

\no It follows that 
$
Y(0)=
\Om
\diag(a,b,b^{-1},a^{-1})
\Om^{-1}
$
for some nonzero real numbers $a,b$. 
\end{corollary}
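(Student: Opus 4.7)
The plan is to exploit the three symmetries in sequence, using $\Om$ as the natural diagonalizing frame for $\Pi$.

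First, the cyclic symmetry says $Y(0)$ commutes with $\Pi=\Om d_4\Om^{-1}$; since $d_4$ has four distinct eigenvalues $1,\om,\om^2,\om^3$, its commutant is exactly the algebra of diagonal matrices, so $Y(0)$ must have the form $\Om D\Om^{-1}$ with $D=\diag(\al_0,\al_1,\al_2,\al_3)$ diagonal. This is the key structural step, since it reduces the remaining two symmetries to conditions on four scalars.

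Second, for the reality condition I rewrite the identity $C=\Om\,\bar\Om^{-1}$ (Lemma \ref{formalsymmetries}(3a)) as $C\bar\Om=\Om$ and $\bar\Om^{-1}C=\Om^{-1}$. Substituting $\bar Y(0)=\bar\Om\,\bar D\,\bar\Om^{-1}$ then gives $C\bar Y(0)C=\Om\,\bar D\,\Om^{-1}$, so the condition $C\bar Y(0)C=Y(0)$ collapses to $\bar D=D$; hence all $\al_i$ are real.

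Third, for the anti-symmetry I use two identities derived from $\Om\De\Om=4d_4^{-1}$, namely $d_4^{-1}\Om^{-1}=\tfrac14\Om\De$ (post-multiply by $\Om^{-1}$) and $\Om d_4=4\De\Om^{-1}$ (take the inverse and use $\De^{-1}=\De$). Combined with $\Om^t=\Om$, which is immediate from the DFT-like form of $\Om$, these yield $Y(0)^{-t}=\Om^{-1}D^{-1}\Om$ and
\[ d_4^{-1}Y(0)^{-t}d_4=\tfrac14\Om\De\cdot D^{-1}\cdot 4\De\Om^{-1}=\Om\,(\De D^{-1}\De)\,\Om^{-1}. \]
Comparing with $Y(0)=\Om D\Om^{-1}$ gives $\De D^{-1}\De=D$; since conjugation by $\De$ reverses the order of the diagonal, this amounts to $\al_0\al_3=1$ and $\al_1\al_2=1$. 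Setting $a:=\al_0$ and $b:=\al_1$ produces the claimed form, and $a,b$ are nonzero because $Y(0)$ is invertible (as the $\ze\to 0$ limit of the invertible $Y_k(\ze)$, whose jumps $G^\prime_2$ were shown to tend to $I$ in section \ref{rh2}).

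The only real obstacle is careful bookkeeping with the identities linking $\Om, d_4, \De, C$; no analytic input is needed, since existence of $Y(0)$ and its three symmetries were already secured by the preceding proposition.
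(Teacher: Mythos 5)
Your argument is correct and follows essentially the same route as the paper's proof: conjugate by $\Om$ so that the cyclic symmetry (via $\Om^{-1}\Pi\,\Om=d_4$, whose distinct eigenvalues force the commutant to be diagonal) reduces $Y(0)$ to $\Om D\Om^{-1}$ with $D$ diagonal, then use $\Om d_4\Om=4\De$ for the anti-symmetry to get $\De D^{-1}\De=D$ and $\bar\Om^{-1}=\tfrac14\Om$, $C=\tfrac14\Om^2$ for the reality to get $\bar D=D$. The only cosmetic differences are the order in which the last two symmetries are processed and your explicit use of $\Om^t=\Om$, which the paper leaves implicit.
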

 
\begin{proof}  The symmetries are immediate from the proposition.  Let us consider
$M=\Om^{-1} Y(0) \Om$.
Since $Y(0)$ commutes with $\Pi$ (cyclic symmetry), $M$ commutes
with $\Om^{-1}\Pi^{-1}\Om$.  But $\Om^{-1}\Pi^{-1}\Om=d_4^{-1}$ (formula (F2) of Appendix A).  Hence $M$ is diagonal, say $\diag(a,b,c,d)$.  The antisymmetry condition 
gives $M^{-t}= (\Om d_4 \Om) M (\Om d_4 \Om)^{-1}$.  Since $\Om d_4 \Om=4\De$ 
(formula (F5)), we have  $M^{-t}= \De M \De$, i.e.\ $c=b^{-1}$, $d=a^{-1}$.
Finally the reality condition gives $\bar M=(\bar\Om^{-1} C \Om) M (\bar\Om^{-1} C \Om)^{-1}$.   Since $\bar\Om^{-1}=\tfrac14\Om$ (formula (F1)),
$\bar\Om^{-1} C \Om=\tfrac14\Om C \Om= \tfrac14\Om C^{-1} \Om$.  This
is equal to $\Om^{-1}\Om$ i.e.\ $I$ by formula (F6).  Thus $\bar M=M$, and $a,b$ must be real.
\end{proof}

\begin{proposition}\label{abpositive}
Assume that $Y$ is a solution of the Riemann-Hilbert problem with contour $\Ga^\prime_2$ and jump function $G^\prime_2$, with $Y\vert_{\ze=\infty}=I$, as above.   Assume\footnote{If $a$ or $b$ is negative, the same proof shows that $w_0,w_1,w_2,w_3$ satisfy 
the equation $(xw_{x})_x=2x[W^t,W]$.   But in the negative sign case, $w_i$ takes values in $\tfrac12\i\pi + \i\pi\Z$ rather than $\i\pi\Z$.  If both $a$ and $b$ are negative, this has no effect on $(xw_{x})_x=2x[W^t,W]$, so $w_0,w_1,w_2,w_3$ still satisfy 
(\ref{ost}),(\ref{as}),  but with the asymptotics modified in the obvious way.}
 that $a,b>0$.  Define real functions $w_0,w_1,w_2,w_3$ (modulo $\i\pi\Z$) by $a= e^{-2w_0}$, $b= e^{-2w_1}$,
and $w_2=-w_1$, $w_3=-w_0$.  Then $w_0,w_1,w_2,w_3$ satisfy (\ref{ost}),(\ref{as}).
\end{proposition}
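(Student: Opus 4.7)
The strategy is to reconstruct the linear system (\ref{ode}) from $Y$ and then extract the Toda equations by Lax pair compatibility. Set
\[
\Psi(\ze,x) := P_\infty(x)\, Y(\ze,x)\, e(\ze,x),
\qquad P_\infty := e^{w}\Om^{-1}, \quad w:=\diag(w_0,w_1,w_2,w_3),
\]
with $w_i$ as defined in the statement. Because the jumps of $Y$ on $\Ga^\prime_2$ are $e\,\Xi^\prime_2\, e^{-1}$ and $\Xi^\prime_2$ is built from $\ze$-independent matrices $\Qi_k$, the jumps of $\Psi$ across the rays are the constant factors $\Qi_k$ themselves. Therefore $A := \Psi_\ze\, \Psi^{-1}$ and $U := \Psi_x\, \Psi^{-1}$ are single-valued on $\C^\ast$, and hence meromorphic there with singularities only at $\ze = 0, \infty$.

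At $\ze = \infty$ the normalization $Y \to I$ together with $P_\infty d_4 P_\infty^{-1} = W^t$ yields $A = x^2 W^t + O(1/\ze)$ and $U = 2x\ze W^t + O(1)$. At $\ze = 0$ the essential identity is
\[
P_\infty\, Y(0) = P_0 \cdot \tfrac14 C,
\]
which follows at once from the stated form of $Y(0)$, the definitions of $w_i$ in terms of $a,b$, and the relation $\tfrac14\Om C\Om = I$ (formula (F6) of Appendix A). Using this together with $Cd_4 C = d_4^{-1}$ (F4), one obtains $\Psi \sim P_0(I+O(\ze)) e^{\frac1\ze d_4}\cdot \tfrac14 C$ near $\ze = 0$, so that $A = -\frac{1}{\ze^2} W + O(1/\ze)$ with $W = P_0 d_4 P_0^{-1} = e^{-w}\Pi e^w$. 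Differentiating $P_\infty Y(0) = P_0\cdot\tfrac14 C$ in $x$ and using $\partial_x P_\infty = w_x P_\infty$ and $\partial_x P_0 = -w_x P_0$ yields $P_\infty\,(\partial_x Y(0))\, Y(0)^{-1} P_\infty^{-1} = -2 w_x$, whence $U|_{\ze = 0} = w_x - 2w_x = -w_x$.

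Since $A$ has a pole of order at most $2$ at $\ze = 0$ and is bounded at $\ze = \infty$, and $U$ has at most a simple pole at $\ze = \infty$ and is regular at $\ze = 0$, these meromorphic functions are Laurent polynomials
\[
A = -\tfrac{1}{\ze^2} W - \tfrac{1}{\ze} B + x^2 W^t, \qquad U = 2x\ze W^t + U_0,
\]
with $B, U_0$ constant in $\ze$, and $U_0 = -w_x$ from the $\ze = 0$ evaluation. The cyclic, antisymmetry, and reality symmetries of $Y$ established in the preceding proposition transport directly to $A$ and $U$; combined with $d_4 W d_4^{-1}=\om^{-1}W$, $d_4 W^t d_4^{-1}=\om W^t$ and $\De W \De = W^t$, they force $B$ to be diagonal of the form $\diag(b_0,b_1,-b_1,-b_0)$ with $b_0,b_1 \in \R$.

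At this point the Lax compatibility $A_x - U_\ze = [U,A]$ is automatic. Expanding in powers of $\ze$, the $\ze^{0}$ equation, on substituting $U_0 = -w_x$ and using $(W^t)_x = [w_x, W^t]$, reduces to $[B - xw_x,\, W^t] = 0$; since $W^t$ has nonvanishing sub-diagonal entries, any diagonal matrix commuting with $W^t$ must be a scalar multiple of the identity, and since both $B$ and $xw_x$ have vanishing trace by the antisymmetry condition, we conclude $B = xw_x$. The $\ze^{-1}$ equation then reads $(xw_x)_x = 2x[W^t, W]$, which componentwise is $(x(w_i)_x)_x = 2x\bigl(e^{2(w_i - w_{i-1})} - e^{2(w_{i+1} - w_i)}\bigr)$; via the radial Laplacian identity $2 w_{\zzb} = \frac{1}{2x}(x w_x)_x$ this is exactly (\ref{ost}), while (\ref{as}) with $l = 4$ is built in by $w_2 = -w_1$, $w_3 = -w_0$. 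The main delicate point is the identification $B = xw_x$: it is the unique step where the $Y(0)$ data (defining $w$) and the subleading $Y$ data (a priori giving $B$) get rigidly matched, and the argument only closes by the commutator/trace-zero reasoning after invoking the value of $U$ at $\ze = 0$.
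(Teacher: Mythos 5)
Your proof is correct, and it shares the paper's overall architecture --- form $\Psi=P_\infty Y e$, note that $\Psi_\ze\Psi^{-1}$ and $\Psi_x\Psi^{-1}$ are single-valued on $\C^\ast$ because the jumps $\Qi_k$ are constant in $\ze$ and $x$, and determine them as Laurent polynomials by Liouville --- but the crucial identification of the residue is carried out by a genuinely different mechanism. The paper never uses the symmetries of $Y$ at this point: it expands both logarithmic derivatives twice, at $\mu=0$ and at $\mu=\infty$, obtaining $U=[P_\infty A_1P_\infty^{-1},W^t]=-[P_0B_1P_0^{-1},W]$ and $V=w_x+[P_\infty A_1P_\infty^{-1},W^t]=-w_x+[P_0B_1P_0^{-1},W]$, and the two determinations of each coefficient combine to give $U=-w_x$, $V=0$ in one stroke; $\Psi$ then satisfies the known Lax pair (\ref{zcc}) and (\ref{ost}) is its compatibility condition. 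You instead (i) obtain $\Psi_x\Psi^{-1}|_{\ze=0}=-w_x$ by differentiating the boundary identity $P_\infty Y(0)=\tfrac14 P_0C$ in $x$ --- a clean shortcut the paper does not use --- and (ii) pin down the residue $B$ by first arguing from the symmetries that it is diagonal and trace-free, and then reading $[B-xw_x,W^t]=0$ off the $\ze^0$ term of the zero-curvature identity.

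Step (ii) is where the extra care lies, and it does close. The cyclic symmetry of $Y$ transports to $A(\ze)=\om\, d_4^{-1}A(\om\ze)\,d_4$ (using $P_\infty\Pi P_\infty^{-1}=d_4$ and $\Pi e(\ze)=e(\om\ze)\Pi$), and comparing $\ze^{-1}$ coefficients forces $d_4^{-1}Bd_4=B$, i.e.\ $B$ diagonal, since conjugation by $d_4$ fixes exactly the diagonal matrices; the anti-symmetry yields $\De B\De=-B$ and hence $\mathrm{tr}\,B=0$. Diagonality is genuinely indispensable here: the centralizer of $W^t$ consists of all polynomials in $W^t$, of which only the scalars are diagonal, so without it the commutator equation would not determine $B$. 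I also checked your order-by-order bookkeeping: the $\ze^0$ equation is $x^2(W^t)_x=-2x[W^t,B]+x^2[U_0,W^t]$, which with $(W^t)_x=[w_x,W^t]$ and $U_0=-w_x$ gives exactly $[B-xw_x,W^t]=0$, and the $\ze^{-1}$ equation gives $B_x=2x[W^t,W]$ since $[U_0,B]=0$ for diagonal $B$. In summary, both proofs are sound; the paper's is shorter because the double expansion matching makes any symmetry input unnecessary, while yours makes the isomonodromic structure (constant jumps, the induced symmetry of the coefficient matrix, and the order-by-order compatibility) more explicit.
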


\begin{proof}  Let us define $P_0=e^{-w}\Om$, $P_\infty=e^{w}\Om^{-1}$ where 
$\Om$ is as before and
$e^{w}=\diag(e^{w_0},e^{w_1},e^{w_2},e^{w_3})$.   Let us
introduce a new function
\[
\Psi=P_\infty Y  e^{x^2 \ze  d_4 +\frac1\ze  d_4^{-1}}
\]
Like $Y$, $\Psi$ is sectionally-holomorphic, but $\Psi_\mu\Psi^{-1}$ and $\Psi_x\Psi^{-1}$
are holomorphic for all $\mu=x\ze\in\C^\ast$.  We claim that
\[
\Psi_\mu\Psi^{-1}=\left( -\tfrac{1}{\mu^2} xW - \tfrac{1}{\mu} xw_x + xW^t\right),\quad
\Psi_x\Psi^{-1}=\left(  \tfrac1\mu W + \mu W^t\right)
\]
i.e.\ $\Psi$ satisfies the system (\ref{zcc}) of section \ref{intro}.  It follows from this that 
$w_0,w_1,w_2,w_3$ satisfy (\ref{ost}) (and they satisfy (\ref{as}) by construction).  

To prove the claim, we shall use 
\[
\Psi(\mu,x)\sim
\begin{cases}
P_\infty (I +\tfrac1\mu A_1 + \tfrac1{\mu^2} A_2 + \cdots)
e^{x\mu  d_4}
\quad\text{as}\ \mu\to\infty
\\
P_0 (I +\mu B_1 + \mu^2 B_2 + \cdots)
e^{x\frac 1\mu  d_4^{-1}} \tfrac14 C
\quad\text{as}\ \mu\to 0.
\end{cases}
\]
We obtain
\begin{align*}
\Psi_\mu\Psi^{-1}&= 
xP_\infty d_4 P_\infty^{-1} + \tfrac1\mu x P_\infty[A_1,d_4] P_\infty^{-1} + O(\tfrac1{\mu^2})
\\
&=xW^t + \tfrac1\mu x [P_\infty A_1 P_\infty^{-1},W^t] + O(\tfrac1{\mu^2})
\end{align*}
near $\mu=\infty$, and
\begin{align*}
\Psi_\mu\Psi^{-1}&= 
-\tfrac1{\mu^2} xP_0 d_4 P_0^{-1} - \tfrac1\mu x P_0[B_1,d_4] P_0^{-1} + O(1)
\\
&=-\tfrac1{\mu^2} xW - \tfrac1\mu x [P_0 B_1 P_0^{-1},W] + O(1)
\end{align*}
near $\mu=0$.  As $\Psi_\mu\Psi^{-1}$ is holomorphic on $\C^\ast$, we must have
\[
\Psi_\mu\Psi^{-1}= -\tfrac1{\mu^2} xW + \tfrac1\mu x U + xW^t
\]
where
\begin{equation}\label{u}
U=[P_\infty A_1 P_\infty^{-1},W^t] =-[P_0 B_1 P_0^{-1},W].
\end{equation}

A similar calculation for $\Psi_x\Psi^{-1}$ gives
\begin{align*}
\Psi_x\Psi^{-1}&= 
\mu P_\infty d_4 P_\infty^{-1} + (P_\infty)_x P_\infty^{-1}+
P_\infty[A_1,d_4] P_\infty^{-1} + O(\tfrac1{\mu})
\\
&=\mu W^t + w_x + [P_\infty A_1 P_\infty^{-1},W^t] + O(\tfrac1{\mu})
\end{align*}
near $\mu=\infty$, and
\begin{align*}
\Psi_\mu\Psi^{-1}&= 
\tfrac1{\mu} P_0 d_4 P_0^{-1} + (P_0)_x P_0^{-1}+
P_0[B_1,d_4] P_0^{-1} + O(\mu)
\\
&=\tfrac1{\mu} W - w_x +  [P_0 B_1 P_0^{-1},W] + O(\mu)
\end{align*}
near $\mu=0$, hence
\[
\Psi_x\Psi^{-1}= \tfrac1{\mu} W + V + \mu W^t
\]
where
\begin{equation}\label{v}
V=w_x+[P_\infty A_1 P_\infty^{-1},W^t] =-w_x + [P_0 B_1 P_0^{-1},W].
\end{equation}
Combining (\ref{u}) and (\ref{v}), we see that
\[
[P_0 B_1 P_0^{-1},W]=w_x=-[P_\infty A_1 P_\infty^{-1},W^t].
\]
Thus $U=-w_x$ and $V=0$, as required.  
\end{proof}

The domain of definition of the functions 
$w_0(x),w_1(x),w_2(x),w_3(x)$ here is simply the set of $x$ for which
the Riemann-Hilbert problem can be solved.  We investigate this
in the next two sections.

\section{Solution of the Riemann-Hilbert problem for $x$ large}\label{asymptotic}

We shall show that the Riemann-Hilbert problem for the function $G^\prime_2$ on the contour 
$\Ga_2^\prime$ 
(Fig.\ \ref{simplifiedrotated}) is solvable when the parameter $x$ is sufficiently large.  To do this we shall apply Theorem 8.1 of \cite{FIKN06}, which expresses the Riemann-Hilbert problem as an integral equation and gives a criterion for solvability.

First let us calculate $G^\prime_2(\ze,x)$ explicitly.  As in the calculation of $G_2$ in the previous section, the $(i,j)$ entry of $G_2^\prime$ is
\[
( G^\prime_2(\ze,x))_{ij}=
\begin{cases}
(\Xi_2^\prime)_{ij}\ e^{\left( 
\frac{2}k e^{\i \boxep}  \ +\    2 k x^2 e^{-\i \boxep}
\right)}
\ \ \text{if $i-j$ is even}
\\
(\Xi_2^\prime)_{ij}\ e^{\left( 
\frac{\sqrt 2}k e^{\i \boxep}  \ +\  \sqrt 2 k x^2 e^{-\i \boxep}
\right)}\ \ \text{if $i-j$ is odd}
\end{cases}
\]
where $\ze=k e^{\i\th}$.
(The boxed angle is $\pi$ for the contour $\Ga^\prime_2$.)

Let us write $l=xk$.  Then we have
\[
( G^\prime_2(\ze,x))_{ij}=
\begin{cases}
(\Xi_2^\prime)_{ij}\ e^{-2x
\left( 
\frac{1}l  \ +\     l
\right)}
\ \ \text{if $i-j$ is even}
\\
(\Xi_2^\prime)_{ij}\ e^{-\sqrt 2 x
\left( 
\frac{1}l  \ +\     l
\right)}\ \ \text{if $i-j$ is odd}
\end{cases}
\]
hence
\[
\vert \left( G^\prime_2(\ze,x)-I \right)_{ij} \vert \le
\begin{cases}
A e^{
-2x
\left( 
\frac{1}l  \ +\     l
\right)
}
\le 
A e^{-4x}
\ \ \text{if $i-j$ is even}
\\
A e^{
-\sqrt2x\left( 
\frac{1}l  \ +\     l
\right)
}
\le 
A e^{-2\sqrt2x}
\ \ \text{if $i-j$ is odd}
\end{cases}
\]
where $A$ is a constant (which depends only on $s_1^\R,s_2^\R$).
This implies solvability for sufficiently large $x$, and one has:

\begin{equation}\label{integral}
\displaystyle Y(0,x) = I  + \tfrac{1}{2\pi\ii}\int_{\Ga_2^\prime}  
\frac{G^\prime_2(\ze,x) - I}{\ze} \  d\ze + O(e^{-4\sqrt2 x}).
\end{equation}

Using the version of formula (\ref{Gtwo}) for $\Xi_2^\prime$, and integrating over the whole contour, we obtain $Y(0,x)=$
\begin{equation*}
\left(
\begin{array}{cccc}
\vphantom{\dfrac12}
1 &  \!\!\!\om^{-\frac12} s_1^\R \, \phi(\sqrt2 x)\!\! & -s_2^\R \, \phi(2 x) &  \!\!\!\om^{\frac12} s_1^\R \, \phi(\sqrt2 x)\!\!  \!\!\!
\\
\vphantom{\dfrac12}
\!\!\!\om^{\frac12} s_1^\R \, \phi(\sqrt2 x)\!\!   & 1 &   \!\!\!\om^{-\frac12} s_1^\R \, \phi(\sqrt2 x) & -s_2^\R \, \phi(2 x)\!\!
  \\
\vphantom{\dfrac12}
-s_2^\R \, \phi(2 x) &  \!\!\!\om^{\frac12} s_1^\R \, \phi(\sqrt2 x)\!\!  & 1 &   \!\!\!\om^{-\frac12} s_1^\R \, \phi(\sqrt2 x)\!\!  \!\!\!
\\
\vphantom{\dfrac12}
\!\!\! \om^{-\frac12} s_1^\R \, \phi(\sqrt2 x)\!\! &  -s_2^\R \, \phi(2 x) &  \!\!\!\om^{\frac12} s_1^\R \, \phi(\sqrt2 x) \!\! &  1
\\
\end{array}
\right)
\!+ O(e^{-4\sqrt2 x})
\end{equation*}
where $\phi(x)=\frac1{2\pi}\int_0^\infty  e^{-x(l+\frac1l)}  \frac{dl}l$.  (The negative signs in (\ref{Gtwo}) disappear because the rays corresponding to those entries are oriented inwards.)
By Laplace's method we have 
$\phi(x)= \tfrac12 (\pi x)^{-\frac12} \, e^{-2x} + O(x^{-\frac32}e^{-2 x})$
as $x\to\infty$.

On the other hand, it can be shown 
(see Appendix C) 
that any radial solution of (\ref{ost}) which is smooth near infinity must satisfy $w_i\to 0$ as $x\to\infty$.  Thus we are in the situation of Proposition \ref{abpositive}, i.e.\ $a,b>0$ here (in fact $a,b\to 1$ as $x\to\infty$), and we have\footnote{Note that all our previous arguments apply equally well if $C$ is replaced by $-C$, but in this calculation the sign of $Y(0)$ would change.  This would change the sign of all $e^{2w_i}$, and we would be in the situation described in the footnote to Proposition \ref{abpositive} where $a$ and $b$ are negative.}
\begin{align*}
Y(0)&=\lim_{\ze\to 0} 
P_\infty^{-1}\ \fPsii_k (\ze,x)\,  e^{-x^2 \ze  d_4 -\frac1\ze  d_4^{-1}}
\\
&=\lim_{\ze\to 0} P_\infty^{-1}\ \Psiz_{\frac74-k}\ \tfrac14C 
e^{ -\frac1\ze  d_4^{-1}}
\\
&=\lim_{\ze\to 0} P_\infty^{-1}\ P_0 (I + O(\ze))  \tfrac14C
\\
&=P_\infty^{-1} P_0 \tfrac14C = \tfrac14 \Om e^{-w}e^{-w} \Om C.
\end{align*}
This is  $\Om e^{-2w}\Om^{-1}$
by formula (F6) of Appendix A.
Hence
\begin{equation*}
Y(0)=
\bp
t_0 & t_1 & t_2 &t_3 \\
t_3 &t_0 &t_1 &t_2 \\
t_2 &t_3 &t_0 &t_1 \\
t_1 &t_2 &t_3 &t_0 
\ep
\end{equation*}
where
\begin{align*}
t_0&=\tfrac14(e^{-2w_0} + e^{2w_0} +e^{-2w_1} +e^{2w_1}) 
= 1+\cdots
\\
t_1&=\tfrac14( e^{-2w_0} + \ii e^{2w_0} -\ii e^{-2w_1} -e^{2w_1}) 
=  -2^{-\tfrac12}\om^{-\tfrac12}(w_0+w_1)+\cdots
\\
t_2&=\tfrac14( e^{-2w_0} - e^{2w_0} -e^{-2w_1} +e^{2w_1})
= w_1-w_0+\cdots
\\
t_3&=\tfrac14( e^{-2w_0} -\ii e^{2w_0} +\ii e^{-2w_1} -e^{2w_1})
= -2^{-\tfrac12}\om^{\tfrac12}(w_0+w_1)+\cdots
\end{align*}
where the dots indicate that higher order terms in $w_0,w_1$ are omitted.
Comparing these with the formula for $Y(0,x)$, and using the asymptotic formula for $\phi(x)$ above,
we obtain:

\begin{theorem}\label{asymptoticsatinfinity}
Let $w_0,w_1,w_2 \, (=\!\!-w_1),w_3 \, (=\!\!-w_0)$ be the solution of 
(\ref{ost}),(\ref{as}) obtained from the Riemann-Hilbert problem (2)${\,}^\prime$.   This solution is smooth near $x=\infty$ and satisfies
\begin{align*}
w_0(x)+w_1(x)\ &=\  -\ s_1^\R\  2^{-\frac34}\  (\pi x)^{-\frac12} \ e^{-2\sqrt2 x}
+ O(x^{-1}e^{-2\sqrt2 x})
\\
w_0(x)-w_1(x)\ &=\  \ \  s_2^\R\  2^{-\frac32}\  (\pi x)^{-\frac12} \ e^{-4 x}
+ O(x^{-1}e^{-4 x})
\end{align*}
as $x\to\infty$.   Thus, if $s_1^\R\ne 0$, we have
\begin{align*}
w_0(x)\ &=\  -\ s_1^\R\  2^{-\frac74}\  (\pi x)^{-\frac12} \ e^{-2\sqrt2 x}+ O(x^{-1}e^{-2\sqrt2 x})
\\
w_1(x)\ &=\  -\ s_1^\R\  2^{-\frac74}\  (\pi x)^{-\frac12} \ e^{-2\sqrt2 x}+ O(x^{-1}e^{-2\sqrt2 x})
\end{align*}
as $x\to\infty$.
If $s_1^\R=0$, we have $w_0+w_1=0$, hence
\begin{align*}
w_0(x)\ &=\  \ \ s_2^\R\  2^{-\frac52}\  (\pi x)^{-\frac12} \ e^{-4 x}+ O(x^{-1}e^{-4 x})
\\
w_1(x)\ &=\  -s_2^\R\  2^{-\frac52}\  (\pi x)^{-\frac12} \ e^{-4 x}+ O(x^{-1}e^{-4 x})
\end{align*}
as $x\to\infty$.
\end{theorem}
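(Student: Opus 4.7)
The plan is to exploit the fact that, for large $x$, the jump matrix $G^\prime_2$ is exponentially close to the identity, so the Riemann–Hilbert problem can be solved by the classical small-norm (Neumann series) argument of Chapter 8 of \cite{FIKN06}. The entry-by-entry bounds on $G^\prime_2-I$ in terms of $e^{-2x(l+1/l)}$ and $e^{-\sqrt2 x(l+1/l)}$ (with $l=xk$) that are displayed just before the theorem show that the Cauchy operator associated to $G^\prime_2-I$ on $\Ga_2^\prime$ has operator norm $O(e^{-2\sqrt2 x})$, so solvability for $x$ large holds, and the unique solution admits the representation
\[
Y(\ze,x)=I+\tfrac{1}{2\pi\i}\int_{\Ga_2^\prime}\frac{(Y_{\text{right}}(\eta,x))(G^\prime_2(\eta,x)-I)}{\eta-\ze}\,d\eta.
\]
Evaluating at $\ze=0$ and replacing $Y_{\text{right}}$ by $I$ (the error of this substitution is $O(e^{-4\sqrt2 x})$ by squaring the above bound) yields the formula (\ref{integral}) for $Y(0,x)$.

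Next I would compute the integral in (\ref{integral}) ray by ray. Each ray of $\Ga_2^\prime$ carries a single off-diagonal entry of $\Xi_2^\prime$ (read off from the tables in section \ref{rh2}, taking into account the $-3\pi/8$ rotation that sets the boxed angle to $\pi$). After substituting $\ze=ke^{\i\th}$ and $l=xk$, the integral along each ray reduces, up to the sign coming from its orientation, to
\[
\tfrac{1}{2\pi}\int_0^\infty e^{-\alpha x(l+1/l)}\tfrac{dl}{l}=\phi(\alpha x),
\]
with $\alpha=2$ for even $i-j$ (the $s_2^\R$ entries) and $\alpha=\sqrt2$ for odd $i-j$ (the $s_1^\R$ entries). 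Laplace's method applied to the saddle at $l=1$ gives $\phi(x)=\tfrac12(\pi x)^{-1/2}e^{-2x}+O(x^{-3/2}e^{-2x})$. Assembling the four contributions in each entry yields the explicit matrix for $Y(0,x)$ displayed in the excerpt, with leading terms of orders $e^{-2\sqrt2 x}$ (from $s_1^\R$) and $e^{-4x}$ (from $s_2^\R$).

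On the other hand, Proposition \ref{abpositive} together with the computation $Y(0)=P_\infty^{-1}P_0\tfrac14 C=\Om e^{-2w}\Om^{-1}$ (using $\tfrac14\Om C\Om=I$ from formula (F6)) gives the algebraic expression for $Y(0)$ in terms of $e^{\pm 2w_0}, e^{\pm 2w_1}$, which when linearised around $w_0=w_1=0$ produces the formulas for $t_0,t_1,t_2,t_3$ listed in the text. Matching the $(1,1),(1,2),(1,3)$ entries of the two expressions for $Y(0,x)$, i.e.\ equating
\[
-2^{-1/2}\om^{-1/2}(w_0+w_1)+\cdots=\om^{-1/2}s_1^\R\phi(\sqrt2 x),\qquad w_1-w_0+\cdots=-s_2^\R\phi(2x),
\]
and substituting the Laplace asymptotics for $\phi$, yields the two asymptotic formulas for $w_0\pm w_1$. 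The case split $s_1^\R\ne 0$ versus $s_1^\R=0$ then follows immediately, using the anti-symmetry $w_2=-w_1$, $w_3=-w_0$ and the fact that $s_1^\R=0$ forces $w_0+w_1\equiv 0$ (this last point requires a short consistency check: it follows from uniqueness of the solution to the reduced RH problem, in which the relevant rays carry trivial jumps).

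The main obstacle, and the only step that is not essentially a direct computation, is controlling the error when passing from the full Neumann-series representation of $Y(0,x)$ to the single-integral approximation (\ref{integral}); this requires the exponential decay estimates on $G^\prime_2-I$ to hold uniformly along all rays of $\Ga_2^\prime$, and the constant $A$ to be independent of $x$. The rotation to $\Ga_2^\prime$ was arranged precisely so that every exponent $e^{-\alpha x(l+1/l)}$ has a genuinely negative real part along every ray, which is what makes the uniformity work.
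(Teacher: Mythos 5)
Your proposal is correct and follows essentially the same route as the paper: the small-norm argument via Theorem 8.1 of \cite{FIKN06} giving formula (\ref{integral}) with error $O(e^{-4\sqrt2 x})$, the ray-by-ray reduction to $\phi(\sqrt2 x)$ and $\phi(2x)$ with Laplace's method, and the comparison with $Y(0)=\Om e^{-2w}\Om^{-1}$ linearised in $w_0,w_1$. The only (harmless) embellishment is your explicit justification that $s_1^\R=0$ forces $w_0+w_1\equiv 0$, which the paper leaves implicit.
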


As an application of the theorem, we can remove the sign ambiguity from the formula 
$\pm s_1^\R = 2\cos \tfrac\pi4 {\scriptstyle (\ga_0+1)} +  2\cos \tfrac\pi4 {\scriptstyle(\ga_1+3)}$ (Theorem B of \cite{GuItLiXX}) which relates the asymptotic data at $x=0$ to the Stokes data, in the case of solutions which are smooth on $(0,\infty)$:

\begin{corollary}\label{ambiguity}  Let $w_0,w_1,w_2 \, (=\!\!-w_1),w_3 \, (=\!\!-w_0)$ be the solution of 
(\ref{ost}),(\ref{as})
on $(0,\infty)$ which satisfies\footnote{We use the convention of \cite{GuLiXX},\cite{GuItLiXX} that
$2w_i\sim \ga_i \log x$, the factor $2$ coming from the zero curvature representation of (\ref{ost}). This is consistent with $u(x)\sim\ga\log x$, $v(x)\sim\de\log x$ where $\ga=\ga_0$, $\de=\ga_1$
and $u=2w_0$, $v=2w_1$ in the present situation.
} 
$w_0(x)\sim \tfrac12\ga_0\log x$, $w_1(x)\sim \tfrac12\ga_1\log x$ as $x\to 0$, and $w_0(x)\to 0$, $w_1(x)\to 0$ as $x\to \infty$
 (see Theorem B of \cite{GuItLiXX}).   Then we have
\begin{align*}
-s_1^\R &= 2\cos \tfrac\pi4 {\scriptstyle (\ga_0+1)} +  2\cos \tfrac\pi4 {\scriptstyle(\ga_1+3)}
\\
-s_2^\R &= 2+4\cos \tfrac\pi4 {\scriptstyle(\ga_0+1)} \, \cos \tfrac\pi4 {\scriptstyle(\ga_1+3)}.
\end{align*}
\end{corollary}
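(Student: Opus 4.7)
The strategy is to combine Theorem~\ref{asymptoticsatinfinity} with Theorem~B of \cite{GuItLiXX}. The latter furnishes $|s_1^\R|$ and $|s_2^\R|$ (or equivalently their squares) as explicit functions of the asymptotic data $(\ga_0,\ga_1)$ at zero, but leaves the signs undetermined. The former, derived from the Riemann-Hilbert construction that parametrises solutions directly by $(s_1^\R, s_2^\R)$, expresses the leading asymptotic of $w_0+w_1$ (respectively $w_0-w_1$) at infinity as an explicit positive constant times $-s_1^\R$ (respectively $s_2^\R$). Thus the signs of $s_1^\R$ and $s_2^\R$ are determined, in principle, by the signs of these leading coefficients.

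First I fix $(\ga_0,\ga_1)$ in the admissible parameter region identified in \cite{GuItLiXX} and let $w_0,w_1$ be the unique smooth solution on $(0,\infty)$ with this asymptotic data. By Theorem~B, together with Theorem~\ref{asymptoticsatinfinity}, the quantity $s_1^\R$ and the cosine expression $2\cos\tfrac\pi4(\ga_0+1)+2\cos\tfrac\pi4(\ga_1+3)$ are continuous functions of $(\ga_0,\ga_1)$ whose absolute values coincide. Hence their ratio is locally constant, equal to $+1$ or $-1$, on each connected component of the region where the cosine sum is nonzero; it suffices to pin down this sign at a single reference point in each component.

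A convenient reference is a small perturbation of the trivial solution $(\ga_0,\ga_1)=(0,0)$, where both sides vanish. For such perturbations one linearises (\ref{ost}) at the trivial solution so that $w_0+w_1$ decouples at leading order and its asymptotic as $x\to\infty$ can be read off from the linear theory (the decoupled modes are governed by equations analogous to (\ref{ost2linear})). Matching the linearised prediction against Theorem~\ref{asymptoticsatinfinity} fixes the sign to $-$ for $s_1^\R$ in the component containing the origin, and propagation by continuity gives $-s_1^\R = 2\cos\tfrac\pi4(\ga_0+1)+2\cos\tfrac\pi4(\ga_1+3)$ throughout. The sign of $s_2^\R$ is cleanest to extract on the subvariety $\{s_1^\R=0\}$, where Theorem~\ref{asymptoticsatinfinity} reduces to $w_0-w_1\sim s_2^\R\,2^{-5/2}(\pi x)^{-1/2}e^{-4x}$ and a direct PDE/linearisation argument determines the sign; continuity again extends the result to the full parameter region.

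The main obstacle is verifying that the sign propagates correctly across the zero locus of the cosine expressions, where the absolute-value comparison is degenerate. This requires either analyticity of the map $(\ga_0,\ga_1)\mapsto(s_1^\R,s_2^\R)$ (so that the sign choice cannot flip except along a measure-zero set, and hence not at all by continuity), or a component-by-component analysis of the admissible parameter region using the table of cases in \cite{GuItLiXX}. Granting this, the combined formulas of Theorem~\ref{asymptoticsatinfinity} and Theorem~B yield the stated relations in the corollary.
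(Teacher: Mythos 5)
Your overall architecture matches the paper's: Theorem \ref{asymptoticsatinfinity} converts the sign of $s_1^\R$ into the sign of the leading coefficient of $w_0+w_1$ at infinity, and the task is to determine that sign and compare it with the sign of the cosine sum from Theorem B. The genuine gap is in how you determine the sign. You propose to do it near the trivial solution $(\ga_0,\ga_1)=(0,0)$ by linearising the equations and ``reading off'' the asymptotics at infinity from the linear theory. This step is unjustified and is in fact the hard part: for $(\ga_0,\ga_1)\ne(0,0)$ the solution satisfies $w_i\sim\tfrac12\ga_i\log x$ as $x\to 0$, so it is not a uniformly small perturbation of the zero solution, and the linearisation $e^{4w_0}-e^{-4w_1}\approx 4(w_0+w_1)$ fails near the origin. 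Asserting that the connection coefficient of the nonlinear problem (the coefficient of $x^{-1/2}e^{-2\sqrt2 x}$ at infinity expressed through the data at $0$) converges to that of the linearised Bessel-type problem is itself a small-amplitude connection formula and would need its own proof. Moreover your chosen reference point is exactly where both sides of the identity vanish, which makes the sign extraction there maximally degenerate.

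The paper avoids all of this with an elementary global argument: for $\ga_0+\ga_1>0$ the maximum principle applied to the sum of the two equations, $2(w_0+w_1)_{\zzb}=e^{4w_0}-e^{-4w_1}$, shows that $w_0+w_1<0$ on all of $(0,\infty)$ (as in Proposition 3.3 of \cite{GuLiXX}); hence Theorem \ref{asymptoticsatinfinity} gives $s_1^\R>0$ on this whole open subregion, while writing the cosine sum as $4\cos\tfrac\pi8(\ga_0+\ga_1+4)\cos\tfrac\pi8(\ga_0-\ga_1-2)$ and checking the ranges of the two angles shows it is strictly negative there. This fixes the sign in Theorem B on an open set; since that theorem involves a single global sign choice, no propagation across the zero locus is needed, which disposes of the analyticity/component-counting issue you flag at the end. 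No separate argument is needed for $s_2^\R$: the residual ambiguity corresponds to $\mu\mapsto-\mu$ in the characteristic polynomial $p$, which changes $s_1^\R$ but not the $\mu^2$-coefficient, so the second formula is unambiguous once the roots are identified. To repair your proof, replace the linearisation step by this maximum-principle argument.
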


\begin{proof}  For $\ga_0+\ga_1>0$ we have $w_0(x)+w_1(x)<0$ for all $x\in(0,\infty)$.  
(The case $\ga_0,\ga_1>0$ is Proposition 3.3 of \cite{GuLiXX}. When $\ga_0+\ga_1>0$,
the statement can be proved in the same way, applying the maximum principle to the
the sum of equations (\ref{ost2}), which is 
$2(w_0+w_1)_{z\zbar}= e^{4w_0} - e^{-4w_1}$.)
From Theorem B of \cite{GuItLiXX} we have 
$\pm s_1^\R = 2\cos \tfrac\pi4 {\scriptstyle (\ga_0+1)} +  2\cos \tfrac\pi4 {\scriptstyle(\ga_1+3)}
=4\cos \tfrac\pi8 {\scriptstyle (\ga_0+\ga_1+4)} \cos \tfrac\pi8 {\scriptstyle(\ga_0-\ga_1-2)}$.
It suffices to consider the interior of the region of $(\ga_0,\ga_1)$. Since $\ga_0>-1$, $\ga_1< 1$, $\ga_0-\ga_1< 2$ here, imposing the condition 
$\ga_0+\ga_1>0$ gives
$\tfrac\pi8  (\ga_0+\ga_1+4)\in (\tfrac\pi2,\pi)$ and
$\tfrac\pi8 (\ga_0-\ga_1-2)\in (-\tfrac\pi2,0)$. Hence 
$4\cos \tfrac\pi8 {\scriptstyle (\ga_0+\ga_1+4)} \cos \tfrac\pi8 {\scriptstyle(\ga_0-\ga_1-2)}<0$.
From the previous theorem we have $s_1^\R>0$ in this case, so we must take the negative sign in
Theorem B of \cite{GuItLiXX}.
\end{proof}

Theorem \ref{asymptoticsatinfinity} and Corollary \ref{ambiguity} give the precise \ll connection formula\rr relating the asymptotics of the smooth solutions at zero and infinity. 

Note that if $s_1^\R=0$ then (\ref{ost}) reduces to the sinh-Gordon equation
\[
(w_0)_{\zzb}=\sinh 4w_0.  
\]
In this case Corollary \ref{ambiguity} gives $s_2^\R=-2\sin\tfrac\pi2\ga$.  Thus, for the solution $w_0$ which satisfies $w_0(x)\sim \tfrac\ga2\log x$ as $x\to 0$, we have
\[
w_0(x)\sim -\tfrac{1}{2\sqrt 2}\  \sin\tfrac\pi2\ga \  (\pi x)^{-\frac12} \ e^{-4 x}
\]
as $x\to\infty$. This agrees with the result of \cite{MTW77}
(cf. \cite{FIKN06}, formulae (15.0.1), (15.0.3), (15.0.11)). 

\section{Solution of the Riemann-Hilbert problem for all positive $x$}\label{vanishing}

The \ll Vanishing Lemma\rr (Corollary 3.2 of \cite{FIKN06}) gives a criterion for solvability of the Riemann-Hilbert problem:  {\em If the \ll homogeneous problem\rr  
(in which the condition  $Y\vert_{\ze=\infty}=I$ is replaced by the condition $Y\vert_{\ze=\infty}=0$)
has only the trivial solution $Y=0$, then the original problem is solvable.}

To apply this, it is convenient to simplify the contour $\Ga_2$ in a different way.  Let $\Ga_3$ be the contour obtained by deleting all rays in $\Ga_2$ except for those with arguments $\pi/8,9\pi/8$.  We obtain a simplified Riemann-Hilbert problem on this contour by extending the sectionally-holomorphic functions $\fPsii_{1\scriptstyle\frac12}$, 
$\fPsii_{2\scriptstyle\frac12}$ to the half-planes indicated in 
Fig.\ \ref{simplifiedfolded}.  (This is analogous to the simplified Riemann-Hilbert problem on the contour in Fig.\ \ref{simplified}, which was obtained by extending the sectionally-holomorphic functions $\fPsii_k$ to the interior of the circle.) Note that the original domain of definition of $\fPsii_{1\scriptstyle\frac12}$, 
$\fPsii_{2\scriptstyle\frac12}$ includes these half-planes.
\begin{figure}[h]
\begin{center}
\includegraphics[scale=0.6, trim= 40  260  0  260]{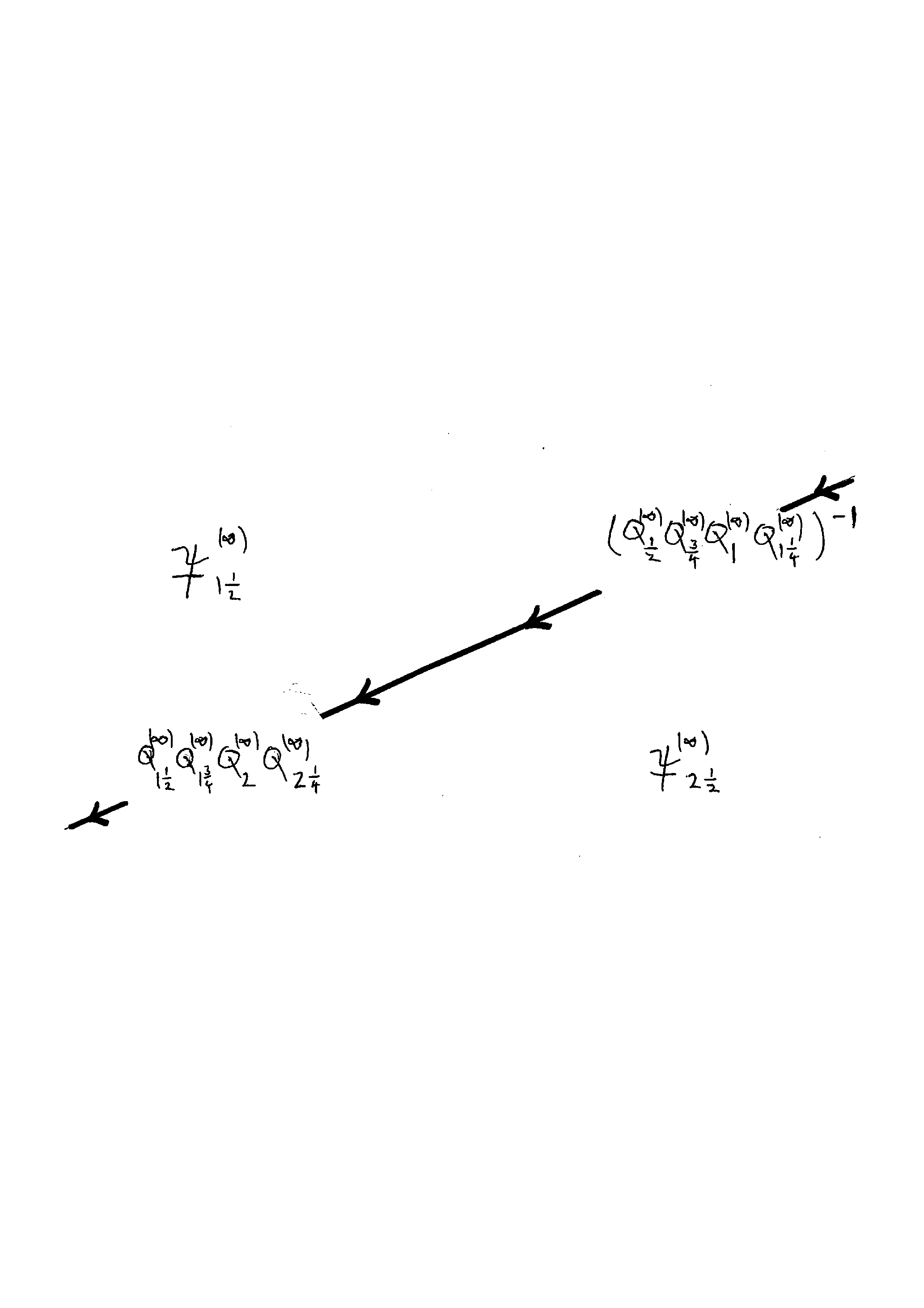}
\end{center}
\caption{The simplified and folded contour $\Ga_3$}\label{simplifiedfolded}
\end{figure}

The jumps $\fPsii_{1\frac12})^{-1} \fPsii_{2\frac12}$ in Fig.\ \ref{simplifiedfolded} can be obtained as follows.  On the $9\pi/8$ ray we have 
$\fPsii_{2\frac12}=\Psii_{2\frac12}$ and $\fPsii_{1\frac12}=\Psii_{1\frac12}$, and $\Psii_{2\frac12} =
\Psii_{1\frac12}\Qi_{1\frac12}\Qi_{1\frac34}\Qi_{2}\Qi_{2\frac14}$. Thus
the jump is $\Qi_{1\frac12}\Qi_{1\frac34}\Qi_{2}\Qi_{2\frac14}$.
On the $\pi/8$ ray we have $\fPsii_{1\frac12}=\Psii_{1\frac12}$, but
$\fPsii_{2\frac12}(\ze)=\Psii_{2\frac12}(e^{2\pi \i}\ze) = 
\Psii_{\frac12}(\ze)$. Now, $\Psii_{1\frac12}=\Psii_{\frac12}
\Qi_{\frac12}\Qi_{\frac34}\Qi_{1}\Qi_{1\frac14}$, so we
conclude that the jump on this ray is
$(\Qi_{\frac12}\Qi_{\frac34}\Qi_{1}\Qi_{1\frac14})^{-1}$.

To apply the theory of \cite{FIKN06}, let us introduce
\begin{gather*}
Y_{-}(\ze,x)=P_\infty^{-1}\ \fPsii_{1\scriptstyle\frac12} (\ze,x)\,  e^{-x^2 \ze  d_4 -\frac1\ze  d_4^{-1}}
\\
Y_{+}(\ze,x)=P_\infty^{-1}\ \fPsii_{2\scriptstyle\frac12} (\ze,x)\,  e^{-x^2 \ze  d_4 -\frac1\ze  d_4^{-1}}.
\end{gather*}
The new jump function is $G_3=e\,\Xi_3 \, e^{-1}$, where $e(\ze,x)=e^{x^2 \ze  d_4 +\frac1\ze  d_4^{-1}}$ and
the matrices $\Xi_3$ are those in
Fig.\ \ref{simplifiedfolded}.

\no{\bf Riemann-Hilbert problem (3):}  {\em 
Let $s_1^\R,s_2^\R\in\R$.  Define
matrices $\Qi_k$ as before.
For these matrices,  find a sectionally-holomorphic function $Y=\{Y_\pm\}$,
such that $Y\to I$ as $\ze\to\infty$, 
whose jumps on the contour $\Ga_3$ are given by $G_3=e\, \Xi_3\, e^{-1}$,
where 
$e(\ze,x)=e^{x^2 \ze  d_4 +\frac1\ze  d_4^{-1}}$ and
$\Xi_3$ is as shown in Fig.\ \ref{simplifiedfolded}.  
}

A solution of the Riemann-Hilbert problem (2) or (2)${\,}^\prime$ gives a solution of (3); conversely, given the matrices $\Qi_k$, a solution of the Riemann-Hilbert problem (3)
gives a solution of (2) or (2)${\,}^\prime$.

The properties\footnote{It is tempting to rotate the contour by $-\pi/8$, to obtain the real line as the new contour $\Ga_3^\prime$.  However the new jump function $G_3^\prime$ would {\em not} have these properties, so we cannot do this.}
$\lim_{\ze\to\infty} G_3(\ze,x)=I$ and 
$\lim_{\ze\to 0} G_3(\ze,x)=I$ can be established in exactly the same way as for $G_2, G_2^\prime$.  We shall give this calculation as we will use the explicit formula for $G_3$ later, for the Vanishing Lemma.   On the ray $\ze=ke^{\i\th}$ (where $\th =\pi/8$ or $9\pi/8$, and $k>0$), we have
\[
(G_3(\ze,x))_{ij}=
\begin{cases}
(\Xi_3)_{ij}\ e^{
\left( 
\frac{2}k e^{\i \boxe}  \ +\    2 k x^2 e^{-\i \boxe}
\right)
}
\ \ \text{if $i-j$ is even}
\\
(\Xi_3)_{ij}\ e^{
\left( 
\frac{\sqrt 2}k e^{\i \boxe}  \ +\    \sqrt 2 k x^2 e^{-\i \boxe}
\right)
}
\ \ \text{if $i-j$ is odd}
\end{cases}
\]
where the boxed angle is indicated in the diagram below:
\[
\begin{array}{|c|c|c|c|}
\hline
\vphantom{\dfrac12}
0 & -\th+\tfrac\pi4 &  -\th+0&   -\th-\tfrac\pi4
\\
\hline
\vphantom{\dfrac12}
 -\th+\tfrac{5\pi}4  & 0 &  -\th-\tfrac{\pi}4 &  -\th-\tfrac{\pi}2
  \\
\hline
\vphantom{\dfrac12}
 -\th+\pi &  -\th+\tfrac{3\pi}4 & 0 &    -\th-\tfrac{3\pi}4
\\
\hline
\vphantom{\dfrac12}
 -\th+\tfrac{3\pi}4 & -\th+\tfrac{\pi}2  &  -\th+\tfrac{\pi}4&  0
\\
\hline
\end{array}
\]
For $\th=9\pi/8$, we have $\Xi_3=
\Qi_{1\frac12}\Qi_{1\frac34}\Qi_{2}\Qi_{2\frac14}$.  Using Appendix A and the above formula for $G_3$, we find
\begin{equation*}
G_3(\ze,x)=
\left(
\begin{array}{c|c|c|c}
\vphantom{\dfrac{A}{A_A}}
 \ \ \ \ 1 \ \ \ \ & \om^{\frac12} (s^\R_1 + s^\R_1s^\R_2)f_1
&  \om^3  ((s^\R_1)^2 + s^\R_2)f_2 &  \ \om^{\frac32} s^\R_1 f_3\ 
\\
\hline
\vphantom{\dfrac{A}{A_A}}
 & 1 &  \om^{\frac12} s^\R_1 f_3 & 
  \\
\hline
\vphantom{\dfrac{A}{A_A}}
 &  &  1 &  
\\
\hline
\vphantom{\dfrac{A}{A_A}}
 &  \om^3 s^\R_2 f_4 &  \om^{\frac32} s^\R_1 f_1 &  1
\\
\end{array}
\right)
\end{equation*}
where 
\begin{align*}
f_1&=e^{
\sqrt2\left(
\frac1k e^{(-9+2)\pi\i/8}  +  x^2ke^{(9-2)\pi\i/8} 
\right)
}
\\
f_2&=e^{
2\left(
\frac1k e^{(-9+0)\pi\i/8}  +  x^2ke^{(9-0)\pi\i/8} 
\right)
}
\\
f_3&=e^{
\sqrt2\left(
\frac1k e^{(-9-2)\pi\i/8}  +  x^2ke^{(9+2)\pi\i/8} 
\right)
}
\\
f_4&=e^{
2\left(
\frac1k e^{(-9+4)\pi\i/8}  +  x^2ke^{(9-4)\pi\i/8} 
\right).
}
\end{align*}
Similarly, for $\th=\pi/8$, we have
\begin{equation*}
G_3(\ze,x)=
\left(
\begin{array}{c|c|c|c}
\vphantom{\dfrac{A}{A_A}}
 1 & 
&   &  
\\
\hline
\vphantom{\dfrac{A}{A_A}}
\om^{-\frac12} (s^\R_1 + s^\R_1s^\R_2)g_1  & 1 &  & 
 \om s^\R_2 g_4
\\
\hline
\vphantom{\dfrac{A}{A_A}}
\om  ((s^\R_1)^2 + s^\R_2)g_2 &  \om^{-\frac12} s^\R_1 g_3 &   \ \ \ \ 1  \ \ \ \ &  
 \om^{-\frac32} s^\R_1 g_1
\\
\hline
\vphantom{\dfrac{A}{A_A}}
\ \om^{-\frac32} s^\R_1 g_3\  &   &   &  1
\\
\end{array}
\right)
\end{equation*}
where 
\begin{align*}
g_1&=e^{
\sqrt2\left(
\frac1k e^{(-1+10)\pi\i/8}  +  x^2ke^{(1-10)\pi\i/8} 
\right)
}
=f_1
\\
g_2&=e^{
2\left(
\frac1k e^{(-1+8)\pi\i/8}  +  x^2ke^{(1-8)\pi\i/8} 
\right)
}
=f_2
\\
g_3&=e^{
\sqrt2\left(
\frac1k e^{(-1+6)\pi\i/8}  +  x^2ke^{(1-6)\pi\i/8} 
\right)
}
=f_3
\\
g_4&=e^{
2\left(
\frac1k e^{(-1-4)\pi\i/8}  +  x^2ke^{(1+4)\pi\i/8} 
\right)
}
=f_4.
\end{align*}
Since $f_i\to 0$ as $k\to 0$ or $\infty$, we have 
$\lim_{\ze\to\infty} G_3(\ze,x)=I$ and 
$\lim_{\ze\to 0} G_3(\ze,x)=I$, as required.

To apply the Vanishing Lemma, we need:

\begin{proposition}  Let $Y_\pm$ be a solution of the homogeneous problem for the contour $\Ga_3$. Then:

\no(a) $\displaystyle \int_{\Ga_3} Y_{+}(\ze,x) \ 
\overline{
Y_{-}(\bar\ze e^{2\pi\i/8},x)
}
^{\, t}\  d\ze=0$

\no(b) $\displaystyle \int_{\Ga_3} Y_{-}(\ze,x) \ 
\overline{
Y_{+}(\bar\ze e^{2\pi\i/8},x)
}
^{\, t}\  d\ze=0$
\end{proposition}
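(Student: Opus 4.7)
The plan is to apply Cauchy's theorem on a half-plane to an auxiliary holomorphic function. The key geometric observation I will exploit is that the map $\sigma(\ze):=\bar\ze\,e^{2\pi\i/8}$ is an involutive anti-holomorphic reflection whose fixed-point set is precisely $\Ga_3$ (the line through $0$ at argument $\pi/8$); it interchanges the two open half-planes $H_\pm$ into which $\Ga_3$ divides $\C$. I label $H_\pm$ so that $Y_\pm$ is holomorphic on $H_\pm$, with jump $Y_+ = Y_- G_3$ on $\Ga_3$.

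For identity (a), I will set
\[
F(\ze) := Y_+(\ze)\, \overline{Y_-(\sigma(\ze))}^{\,t}, \qquad \ze \in H_+.
\]
When $\ze \in H_+$ one has $\sigma(\ze)\in H_-$, so $Y_-(\sigma(\ze))$ is defined. Although $\sigma$ is anti-holomorphic in $\ze$, the outer complex conjugation compensates: setting $\tilde Y_-(w):=\overline{Y_-(\bar w)}$ (which is holomorphic when $Y_-$ is), one checks $\overline{Y_-(\sigma(\ze))} = \tilde Y_-(e^{-\i\pi/4}\ze)$, a holomorphic function of $\ze$. Hence $F$ is holomorphic on $H_+$ and extends continuously to $\Ga_3$; on $\Ga_3$ we have $\sigma(\ze)=\ze$, so the boundary value is $F|_{\Ga_3}= Y_+(\ze)\,\overline{Y_-(\ze)}^{\,t}$, precisely the integrand of (a).

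Next, I will use the homogeneous normalization $Y|_{\ze=\infty}=0$ together with the standard Cauchy-integral representation for RHP solutions (\cite{FIKN06}, Ch.\ 8) to conclude the decay $Y_\pm(\ze)=O(|\ze|^{-1})$ at infinity, and therefore $F(\ze)=O(|\ze|^{-2})$ uniformly in $H_+$. At $\ze=0$, the exponential decay of $G_3-I$ established above (via the quantities $f_i,g_i$) shows that $Y_\pm$ extends continuously to a common value, so $F$ is bounded near the origin. Applying Cauchy's theorem to $H_+\cap\{|\ze|<R\}$ and letting $R\to\infty$, the semicircular contribution vanishes by the $|\ze|^{-2}$ decay, and I obtain $\int_{\Ga_3} F(\ze)\,d\ze=0$, which is (a). Identity (b) follows from the same argument applied in $H_-$ to $\tilde F(\ze):=Y_-(\ze)\,\overline{Y_+(\sigma(\ze))}^{\,t}$.

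I expect the main obstacle to be rigorously establishing the decay $Y_\pm(\ze) = O(|\ze|^{-1})$ at infinity and the continuity of $Y_\pm$ at $\ze=0$, since $0$ is the self-intersection point of the contour and $\infty$ is an accumulation of rays. Both conclusions are standard consequences of the fact that $G_3-I$ has exponential decay at both $0$ and $\infty$ along each ray of $\Ga_3$: the boundedness of the Cauchy singular integral operator on $L^2(\Ga_3)$ yields $L^2$ bounds on $Y_\pm$, whence the required pointwise decay follows from the Cauchy representation formula for $Y$.
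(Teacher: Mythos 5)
Your proposal is correct and follows essentially the same route as the paper: the product $Y_{+}(\ze)\,\overline{Y_{-}(\bar\ze e^{2\pi\i/8})}^{\,t}$ is holomorphic on one of the half-planes cut out by $\Ga_3$ (since the reflection $\ze\mapsto\bar\ze e^{2\pi\i/8}$ swaps the half-planes and the outer conjugation restores holomorphy), it restricts on $\Ga_3$ to the integrand, and Cauchy's theorem together with decay at infinity gives the vanishing of the integral. You merely spell out the decay rate $O(|\ze|^{-1})$ for the homogeneous solution and the behaviour at the node $\ze=0$ in more detail than the paper's one-line justification.
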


\begin{proof}   (a) The function 
$Y_{+}(\ze,x) \ 
\overline{
Y_{-}(\bar\ze e^{2\pi\i/8},x)
}
^{\, t}$ is holomorphic on the lower region (since $Y_{-}$, $Y_{+}$ are
holomorphic on the upper/lower regions, respectively).  Since $G_3\to I$ exponentially as $\ze\to\infty$, the same is true for $Y$, so the stated result follows from Cauchy's Theorem.   The proof of (b) is similar.
\end{proof}

\begin{corollary} Fix $x\in(0,\infty)$.   If $G_3(\ze,x)+\overline{
G_3(\bar\ze e^{2\pi\i/8},x)
}
^{\, t}$
is a positive definite matrix for every $\ze$ on the contour $\Ga_3$, then the homogeneous
Riemann-Hilbert problem on the contour $\Ga_3$ has only the trivial solution $Y=0$.  Hence, by the Vanishing Lemma,  the original 
Riemann-Hilbert problem on the contour $\Ga_3$ is solvable (for the given value of $x$).
\end{corollary}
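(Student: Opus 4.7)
The plan is to feed the two integral identities of the preceding proposition into the jump relation $Y_+=Y_-\,G_3$ on $\Ga_3$, thereby producing a non-negative scalar integrand whose zero integral forces $Y_-$ to vanish on the contour, and then to promote this to $Y_\pm\equiv 0$ in the two half-planes by a Schwarz-reflection-type argument.

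The first step is to observe that each of the two rays of $\Ga_3$ (at arguments $\pi/8$ and $9\pi/8$) is pointwise fixed by the involution $\ze\mapsto\bar\ze e^{2\pi\i/8}$, since it is the reflection across the common line carrying those rays. Consequently the identities (a) and (b) of the preceding proposition become, on $\Ga_3$,
\[
\int_{\Ga_3} Y_+\,\overline{Y_-}^{\,t}\,d\ze=0,\qquad
\int_{\Ga_3} Y_-\,\overline{Y_+}^{\,t}\,d\ze=0.
\]
Substituting $Y_+=Y_-\,G_3$ into the first and adding the resulting two identities yields
\[
\int_{\Ga_3} Y_-\,\bigl(G_3+\overline{G_3}^{\,t}\bigr)\,\overline{Y_-}^{\,t}\,d\ze=0,
\]
in which the bracketed factor coincides on $\Ga_3$ with the matrix assumed positive definite.

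The second step is to extract a scalar non-negative identity. Parametrising each ray as $\ze=k e^{\i\al}$ with $k\in(0,\infty)$ and reading off the orientations from Fig.\ \ref{simplifiedfolded}, I would check that both rays contribute with a common complex phase $e^{\i\pi/8}$ (using $e^{9\pi\i/8}=-e^{\i\pi/8}$ together with the opposite sense of traversal on the two rays). Dividing through by this constant and taking the trace of the matrix identity then reduces it to
\[
\int_0^\infty \mathrm{tr}\bigl(Y_-\,H\,\overline{Y_-}^{\,t}\bigr)\,dk=0,
\]
summed over the two rays, where $H=G_3+\overline{G_3}^{\,t}$. A routine computation gives $\mathrm{tr}(Y_-\,H\,\overline{Y_-}^{\,t})=\sum_i u_i^{\,*} H\,u_i$, where $u_i$ is the transpose of the $i$-th row of $\overline{Y_-}$; positive-definiteness of $H$ makes every term non-negative, so the integrand is a non-negative continuous function of $k$. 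Its integral vanishing then forces $Y_-(\ze,x)=0$ for every $\ze\in\Ga_3$.

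Finally, since $Y_-$ is holomorphic on the half-plane bounded by $\Ga_3$, continuous up to $\Ga_3$ with zero boundary value, and tends to $0$ at infinity, the extension by zero across $\Ga_3$ is holomorphic on $\C$ by Morera's theorem and bounded, hence identically zero by Liouville; then $Y_+=Y_-\,G_3\equiv 0$ as well, and the Vanishing Lemma delivers solvability of the inhomogeneous Riemann-Hilbert problem. The one step I expect to require care is the orientation bookkeeping for the two rays together with the coincidence $\bar\ze e^{2\pi\i/8}=\ze$ on $\Ga_3$; these must line up so that the two half-plane identities combine additively into a genuine positivity statement rather than cancelling.
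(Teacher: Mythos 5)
Your proof is correct and follows the paper's own argument exactly: the paper's proof is the one-line instruction to substitute $Y_+=Y_-G_3$ into identity (a), add identity (b), and use $\bar\ze e^{2\pi\i/8}=\ze$ on $\Ga_3$, which is precisely your first two steps. The additional detail you supply (the common phase $e^{\i\pi/8}$ from the orientations, the trace reduction to $\sum_i u_i^* H u_i\ge 0$, and the Morera--Liouville promotion of $Y_-\equiv 0$ on $\Ga_3$ to $Y\equiv 0$) is all sound and merely makes explicit what the paper leaves to the reader.
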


\begin{proof}  Substitute $Y_+=Y_-G_3$ and add formulae (a) and (b) of the proposition.
(Note that $\bar\ze e^{2\pi\i/8}=\ze$ on the contour $\Ga_3$.)
\end{proof}

We shall obtain a criterion for positive-definiteness which depends on the Stokes data $s_1^\R,s_2^\R$.  The criterion for the $9\pi/8$ ray turns out to be the same as the criterion for the $\pi/8$ ray, so we shall just give details for the latter.  

Let $X(\ze,x)=G_3(\ze,x)+\overline{G_3(\bar\ze e^{2\pi\i/8},x)}^{\, t}$, and write $X=(X_{ij})_{1\le i,j\le 4}$,  $X_k=\det (X_{ij})_{5-k\le i,j\le 4}$.  Since $X_1=2$, we have:
$X$ is positive definite if and only if $X_2,X_3,X_4>0$.

For $\ze=ke^{\i \pi/8}$ we have $X(\ze,x)=$
\begin{equation*}
\left(
\begin{array}{c|c|c|c}
\vphantom{\dfrac{A}{A_A}}
 2 & \om^{\frac12} (s^\R_1 + s^\R_1s^\R_2)\bar g_1
&  \om^{-1}  ((s^\R_1)^2 + s^\R_2)\bar g_2 &  \om^{\frac32} s^\R_1 \bar g_3
\\
\hline
\vphantom{\dfrac{A}{A_A}}
\!\!\om^{-\frac12} (s^\R_1 + s^\R_1s^\R_2)g_1\!\!  & 2 & \om^{\frac12} s^\R_1 \bar g_3  & 
 \om s^\R_2 g_4
\\
\hline
\vphantom{\dfrac{A}{A_A}}
\om  ((s^\R_1)^2 + s^\R_2)g_2 &  \om^{-\frac12} s^\R_1 g_3 &   \ \ \ \ 2  \ \ \ \ &  
\  \om^{-\frac32} s^\R_1 g_1 \ 
\\
\hline
\vphantom{\dfrac{A}{A_A}}
\om^{-\frac32} s^\R_1 g_3  &  \om^{-1} s^\R_2 \bar g_4  &   \om^{\frac32} s^\R_1 \bar g_1 &  2
\\
\end{array}
\right)
\end{equation*}
and this gives
\begin{align*}
X_2&=
4 - \vert g_1\vert^2 (s_1^\R)^2
\\
X_3&=
8-2(s_1^\R)^2 s_2^\R \vert g_1\vert^2 -2(s_1^\R)^2 ( \vert g_1\vert^2 + \vert g_3\vert^2 ) - 2(s_2^\R)^2 \vert g_4\vert^2
\\
X_4&=16  -  8(s_1^\R)^2 ( \vert g_1\vert^2 + \vert g_3\vert^2 )
-  8(s_1^\R)^2 s_2^\R ( \vert g_1\vert^2 + \vert g_2\vert^2 )
- 4 (s_2^\R)^2 ( \vert g_2\vert^2 + \vert g_4\vert^2 )
\\
&
-  2(s_1^\R)^2 (s_2^\R)^2 ( \vert g_2\vert^2 + \vert g_1\vert^4 )
+  (s_1^\R)^4 ( \vert g_1\vert^4 + \vert g_3\vert^4 - 2\vert g_2\vert^2 )
+  (s_2^\R)^4 \vert g_1\vert^4.
\end{align*}
Now, we have
\begin{align*}
\vert g_1\vert^2 &= e^{ -2x(l+\frac1l)(\cos\pi/8+\sin\pi/8)}
= e^{ -2\sqrt2 x(l+\frac1l)\cos\pi/8}
\\
\vert g_2\vert^2 &= e^{ -4x(l+\frac1l)\cos\pi/8} 
\\
\vert g_3\vert^2 &= e^{ -2x(l+\frac1l)(\cos\pi/8-\sin\pi/8)} 
= e^{ -2\sqrt2 x(l+\frac1l)\sin\pi/8} 
\\
\vert g_4\vert^2 &= e^{ -4x(l+\frac1l)\sin\pi/8}.
\end{align*}
where $l=xk$.  The region 
\[
\{ (s_1^\R,s_2^\R)\in\R^2 \st 
X_2,X_3,X_4>0 \}
\]
shrinks as $x(l+\frac1l)$ decreases.  
Since $\{ l+\frac1l \st l>0 \} = [2,\infty)$,  we obtain:

\begin{theorem}\label{xregion}  Let $w_0,w_1,w_2 \, (=\!\!-w_1),w_3 \, (=\!\!-w_0)$ be the solution of 
(\ref{ost}),(\ref{as})
obtained from the Riemann-Hilbert problem (3) (or (2), (2)${\,}^\prime$).  This solution is 
smooth on $(x,\infty)$  if $(s_1^\R,s_2^\R)$ satisfies the conditions
\begin{align*}
0&<4-cs(s_1^\R)^2
\\
0&<8-2(cs+c/s)(s_1^\R)^2-2s^2(s_2^\R)^2-2cs(s_1^\R)^2(s_2^\R)
\\
0&<16-8(cs+c/s)(s_1^\R)^2-4(c^2+s^2)(s_2^\R)^2-8(cs+c^2)(s_1^\R)^2(s_2^\R)
\\
&+(c^2/s^2+c^2s^2-2c^2)(s_1^\R)^4
-2(c^2+c^2s^2)(s_1^\R)^2(s_2^\R)^2
+c^2s^2(s_2^\R)^4
\end{align*}
where $c=e^{-4x\cos\pi/8}$, $s=e^{-4x\sin\pi/8}$.    
\end{theorem}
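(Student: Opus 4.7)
My plan is to apply the corollary of the Vanishing Lemma stated just above the theorem: it suffices to verify that the Hermitian matrix $X(\ze,x) = G_3(\ze,x) + \overline{G_3(\bar\ze e^{2\pi\i/8},x)}^{\,t}$ is positive-definite at every point $\ze$ on the contour $\Ga_3$ for every $x$ in the claimed interval, after which Proposition \ref{abpositive} promotes the resulting sectionally-holomorphic function $Y$ to a smooth solution of (\ref{ost}),(\ref{as}) on $(x,\infty)$. Since $X_1=2>0$ is automatic, Sylvester's criterion reduces positivity of $X$ to $X_2,X_3,X_4>0$; the excerpt has already computed these three leading principal minors explicitly on the $\pi/8$ ray, and the parallel computation on the $9\pi/8$ ray yields the same system of inequalities, so I would focus on the $\pi/8$ ray.

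The quantitative heart of the argument is a substitution. Each $\vert g_i\vert^2$ in $X_2,X_3,X_4$ has the form $e^{-\tau\be_i}$ with $\be_i>0$, where $\tau=l+\tfrac1l$ and $l=xk$. Since $\tau\ge 2$ with equality only at $l=1$, the maxima of $\vert g_1\vert^2,\vert g_2\vert^2,\vert g_3\vert^2,\vert g_4\vert^2$ over $\Ga_3$ are precisely $cs$, $c^2$, $c/s$, $s^2$, using the identities $\cos\tfrac\pi8+\sin\tfrac\pi8=\sqrt2\cos\tfrac\pi8$ and $\cos\tfrac\pi8-\sin\tfrac\pi8=\sqrt2\sin\tfrac\pi8$; substituting these extremal values into $X_2,X_3,X_4$ reproduces exactly the three inequalities in the statement of the theorem. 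Moreover, since $c$ and $s$ both decrease as $x$ grows, the conditions persist for all larger $x$, so establishing positive-definiteness at the given $x$ suffices for solvability of the Riemann-Hilbert problem throughout $(x,\infty)$.

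The main technical obstacle is to show that positivity at the extremal value $l=1$ actually forces positivity for every $l>0$, i.e.\ that the admissible region in the $(s_1^\R,s_2^\R)$-plane shrinks monotonically as $\tau\downarrow 2$. For $X_2=4-\vert g_1\vert^2(s_1^\R)^2$ this is immediate, but $X_3$ and $X_4$ are polynomials in the $\vert g_i\vert^2$ with coefficients of mixed sign---for example the term $(s_2^\R)^4\vert g_1\vert^4$ in $X_4$ has positive coefficient---so naive term-by-term bounds fail. My approach would be to observe that, via the half-angle identities above, all four $\vert g_i\vert^2$ are explicit positive powers of the single variable $q=\vert g_1\vert^2$, then to differentiate $X_3$ and $X_4$ with respect to $q$ and show that the resulting polynomial expressions do not change sign within the region already cut out by the preceding inequalities. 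This reduces the monotonicity verification to a one-variable calculus check, which I expect can be carried out by bounding the positive-coefficient terms by the (dominant, near the upper bound of $q$) negative-coefficient ones.
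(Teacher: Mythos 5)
Your proposal follows the paper's argument essentially verbatim: the paper likewise reduces to Sylvester's criterion for $X=G_3(\ze,x)+\overline{G_3(\bar\ze e^{2\pi\i/8},x)}^{\,t}$ on the $\pi/8$ ray (noting the $9\pi/8$ ray gives the same conditions), observes that each $\vert g_i\vert^2$ depends only on $x(l+\tfrac1l)\ge 2x$, and obtains the stated inequalities by evaluating $X_2,X_3,X_4$ at the extremal point $l=1$, where $(\vert g_1\vert^2,\vert g_2\vert^2,\vert g_3\vert^2,\vert g_4\vert^2)=(cs,c^2,c/s,s^2)$. The monotonicity step you flag as the main technical obstacle is precisely the point the paper dispatches with the one-line assertion that the region $\{X_2,X_3,X_4>0\}$ shrinks as $x(l+\tfrac1l)$ decreases, so your plan to verify it via a one-variable analysis in $q=\vert g_1\vert^2$ is, if anything, more explicit than the printed proof.
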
 

We recall that $w_0,w_1,w_2,w_3$ were introduced in Proposition \ref{abpositive}.  The fact that $a,b>0$ on the interval $(x,\infty)$ follows from the fact that $a,b>0$ near $\infty$
and, if there were some $x_0\in(x,\infty)$ with $a(x_0)=0$ or $b(x_0)=0$ then $Y(x_0)$ would fail to be invertible, a contradiction.  

The case $x=0$ gives our main conclusion:

\begin{theorem}\label{positivity}   Let $w_0,w_1,w_2 \, (=\!\!-w_1),w_3 \, (=\!\!-w_0)$ be the solution of 
(\ref{ost}),(\ref{as})
obtained from the Riemann-Hilbert problem (3) (or (2),  (2)${\,}^\prime$).  This solution is 
smooth on $(0,\infty)$  if $(s_1^\R,s_2^\R)$ satisfies the conditions
\begin{equation}\label{triangle}
\text{$2+s_2^\R>0$, $2+2s_1^\R-s_2^\R>0$, $2-2s_1^\R-s_2^\R>0$.}
\end{equation}
\end{theorem}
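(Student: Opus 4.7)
My approach is to specialize Theorem \ref{xregion} to the limit $x \to 0^+$. Since Theorem \ref{xregion} gives smoothness on $(x_0,\infty)$ whenever its polynomial inequalities are satisfied at that $x_0$, producing smoothness on all of $(0,\infty)$ amounts to verifying those inequalities for every $x_0>0$. The crucial observation, already used in the proof of Theorem \ref{xregion}, is that the region of admissible $(s_1^\R,s_2^\R)$ shrinks as $x_0(l+1/l)$ decreases, so the tightest case is $x_0\to 0^+$, where $c=s=1$. I will therefore substitute $c=s=1$, factor the resulting inequalities, and match them against the triangle conditions (\ref{triangle}).

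Substituting $c=s=1$, the first inequality collapses to $(s_1^\R)^2<4$. For the second, collecting the $(2+s_2^\R)$-terms gives
\[
8-4(s_1^\R)^2-2(s_2^\R)^2-2(s_1^\R)^2 s_2^\R
= 2(2+s_2^\R)\bigl(2-s_2^\R-(s_1^\R)^2\bigr).
\]
For the third, note that at $c=s=1$ the $(s_1^\R)^4$ coefficient vanishes; writing $u=(s_1^\R)^2$, $v=s_2^\R$ the remaining quartic rearranges as
\[
(v^2-4)^2-4u(v+2)^2 \;=\; (v+2)^2\bigl((v-2)^2-4u\bigr),
\]
so the third inequality factors as $(s_2^\R+2)^2\,(2+2s_1^\R-s_2^\R)(2-2s_1^\R-s_2^\R)>0$.

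Under the triangle conditions (\ref{triangle}), the factor $(s_2^\R+2)^2$ is strictly positive and both $2\pm 2s_1^\R-s_2^\R$ are strictly positive, so the factored third inequality holds. These same conditions force $|s_1^\R|<(2-s_2^\R)/2\le 2$, which gives the first inequality, and $(s_1^\R)^2<(2-s_2^\R)^2/4\le 2-s_2^\R$ (the last step using $s_2^\R>-2$), which gives the second factored inequality. By the monotonicity already noted in the proof of Theorem \ref{xregion}, validity at $c=s=1$ propagates to every $c,s\in(0,1]$, so Theorem \ref{xregion} delivers smoothness on $(x_0,\infty)$ for every $x_0>0$, hence on $(0,\infty)$. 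Positivity of $a,b$ throughout $(0,\infty)$ follows as indicated after Theorem \ref{xregion}: $a,b\to 1$ at infinity by Theorem \ref{asymptoticsatinfinity}, and $Y(0,x)$ remains invertible wherever the Riemann-Hilbert problem is solvable, so neither $a$ nor $b$ can vanish.

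The only non-routine step is the factorization of the quartic in the third inequality; the monotonicity in $x$ is essentially already in hand from the preceding section, and the remaining arguments are elementary sign checks on linear expressions.
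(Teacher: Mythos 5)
Your proposal is correct and follows essentially the same route as the paper: substitute $c=s=1$ into the inequalities of Theorem \ref{xregion}, factor them (your factorizations agree with the paper's, namely $(2-s_1^\R)(2+s_1^\R)$, $(2+s_2^\R)(2-(s_1^\R)^2-s_2^\R)$, and $(2+s_2^\R)^2(2+2s_1^\R-s_2^\R)(2-2s_1^\R-s_2^\R)$ up to positive constants), and invoke the monotonicity in $x(l+\frac1l)$ together with the invertibility of $Y$ to get positivity of $a,b$ on all of $(0,\infty)$. The only difference is cosmetic: you spell out the elementary sign checks showing the triangle conditions imply all three factored inequalities, which the paper dismisses as "direct calculation" and defers to the more conceptual argument in Theorem \ref{linalg}.
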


\begin{proof} When $x=0$ we have $c=s=1$.  In this case, the right hand sides of the inequalities in Theorem \ref{xregion} factor as follows:
\begin{align*}
&(2-s_1^\R)(2+s_1^\R)
\\
&(2+s_2^\R)(2-(s_1^\R)^2-s_2^\R)
\\
&(2+s_2^\R)^2(2+2s_1^\R-s_2^\R)(2-2s_1^\R-s_2^\R).
\end{align*}  
The region where these are simultaneously positive is the region given by 
$2+s_2^\R>0$, $2+2s_1^\R-s_2^\R>0$, $2-2s_1^\R-s_2^\R>0$.
This can be seen by direct calculation; we shall give a more conceptual argument later,  in the
proof of Theorem \ref{linalg}.
\end{proof}

This region is the (interior of the) shaded region in Fig.\ \ref{results4}.  The dots represent the solutions with integer Stokes data, which were discussed in detail in \cite{GuLi2XX}.
The region for which smooth solutions exist was determined in \cite{GuLiXX},\cite{GuItLiXX}; it is the closed region bounded by the heavy lines and curve, i.e.\
\begin{equation}\label{all}
\text{
$(s_1^\R)^2  + 4s_2^\R + 8\ge 0$, $2+2s_1^\R-s_2^\R\ge 0$, $2-2s_1^\R-s_2^\R\ge 0$.
}
\end{equation}
Thus, the above calculation gives a proper open subset of this region.  For this open subset, the Riemann-Hilbert method gives an alternative proof\footnote{The solutions constructed by the Riemann-Hilbert are radial solutions.  
By Appendix C, 
such solutions necessarily satisfy asymptotic boundary conditions $w_i(x)/\log x\to\text{constant}$, as $x\to 0$, and $w_i(x)\to 0$, as $x\to\infty$.} 
(to the p.d.e.\ proof in 
\cite{GuLiXX},\cite{GuItLiXX}) of the existence and uniqueness of solutions of 
(\ref{ost}),(\ref{as}) parametrized by Stokes data $(s_1^\R,s_2^\R)$
or asymptotic data $(\ga_0,\ga_1)$.  
\begin{figure}[h]
\begin{center}
\includegraphics[scale=0.4, trim= 40  120  0  120]{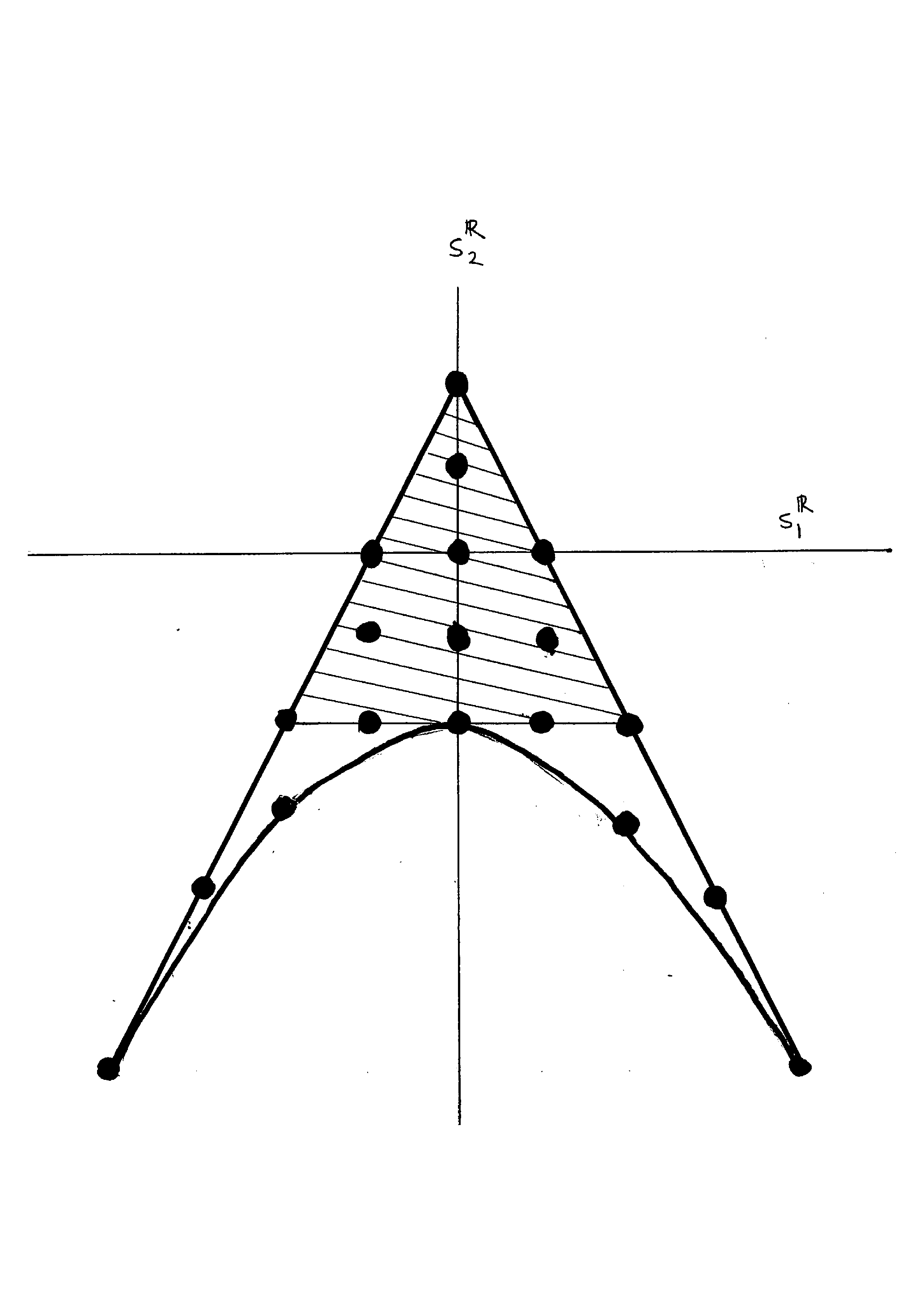}
\end{center}
\caption{Solutions of the tt*-Toda equations (case 4a).}\label{results4}
\end{figure}
As the monodromy data depends analytically on the coefficients of the meromorphic differential equation, we can deduce that the connection matrix $E_1$ (which we have computed in this article only for the above open subset of $w_0,w_1$) is in fact given by
the same formula $E_1=\tfrac14 C \Qi_{\frac34}$ for all smooth solutions. Combining this fact with Corollary} \ref{ambiguity}, we can now give a complete statement of the monodromy data:

\begin{theorem}\label{alldata}
Let $w_0,w_1,w_2 \, (=\!\!-w_1),w_3 \, (=\!\!-w_0)$ be a solution
of the tt*-Toda equations which is smooth  
on $(0,\infty)$.  Then the Stokes data $(s_1^\R,s_2^\R)$  of the associated meromorphic o.d.e.\  (\ref{ode}) is given by
\begin{align*}
s_1^\R &= -2\cos \tfrac\pi4 {\scriptstyle (\ga_0+1)} -  2\cos \tfrac\pi4 {\scriptstyle(\ga_1+3)}
\\
s_2^\R &= -2-4\cos \tfrac\pi4 {\scriptstyle(\ga_0+1)} \, \cos \tfrac\pi4 {\scriptstyle(\ga_1+3)}
\end{align*}
and the connection matrix is
$E_1=\tfrac14 C \Qi_{\frac34}$ where 
$C=
\bsp
1\! & & & \\
 & & & \!1\\
 & & \!1\! & \\
 & \!1\! & &
\esp.
$
We recall that the correspondence between $(w_0,w_1)$ and $(\ga_0,\ga_1)$ is given by
 $w_0(x)\sim \tfrac12\ga_0\log x$, $w_1(x)\sim \tfrac12\ga_1\log x$ as $x\to 0$.  
\end{theorem}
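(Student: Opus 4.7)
The theorem makes two assertions. The Stokes formulas are essentially a restatement of Corollary \ref{ambiguity}, which, using Theorem \ref{asymptoticsatinfinity}, already resolves the sign ambiguity in the relation between $(s_1^\R,s_2^\R)$ and the asymptotic exponents $(\ga_0,\ga_1)$ for every smooth solution on $(0,\infty)$. So my plan is to dispatch the Stokes part in one sentence by citing Corollary \ref{ambiguity}. The substantive new content is the connection-matrix identity $E_1 = \tfrac14 C \Qi_{\frac34}$.

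For the connection matrix, my plan is analytic continuation. By Theorem \ref{positivity}, for every $(s_1^\R,s_2^\R)$ in the open region (\ref{triangle}) the Riemann-Hilbert problem (3) is solvable and, through Proposition \ref{abpositive}, produces a smooth radial solution of (\ref{ost}),(\ref{as}) on $(0,\infty)$. The defining data of this problem was set up in section \ref{rh1} by postulating a priori the value $E_1 = \tfrac14 C \Qi_{\frac34}$ and then verifying (via Proposition \ref{jumponcircle} and the symmetries of Lemma \ref{connectionsymmetries}) that this is a legitimate choice; hence by construction the connection matrix of the resulting solution is exactly $\tfrac14 C \Qi_{\frac34}$. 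Uniqueness of smooth solutions with prescribed Stokes data (from \cite{GuLiXX},\cite{GuItLiXX}) then identifies these with the solutions of the p.d.e.\ approach on the open triangle (\ref{triangle}), so the theorem is proved there.

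To extend the identity to the entire closed region (\ref{all}) of smooth solutions, I would invoke two standard facts. First, the monodromy data (Stokes matrices together with connection matrix) of a meromorphic ODE with fixed pole structure and Stokes directions depends real-analytically on the coefficients of the equation (chapter 3 of \cite{FIKN06}). Second, by the p.d.e.\ parametrization from \cite{GuLiXX},\cite{GuItLiXX}, the coefficients of (\ref{ode})---which are built from $w_0,w_1,xw_x$---depend real-analytically on $(s_1^\R,s_2^\R)$ throughout the interior of (\ref{all}). Composing, the map
\[
(s_1^\R,s_2^\R) \ \longmapsto \ E_1 - \tfrac14 C \Qi_{\frac34}
\]
is a real-analytic matrix-valued function on the connected interior of (\ref{all}) which vanishes on the nonempty open set (\ref{triangle}); by the identity theorem it vanishes on the whole interior, and by continuity on the boundary as well. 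Combined with Corollary \ref{ambiguity}, this yields the full statement.

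The main obstacle, in my view, is the analytic (not merely continuous) dependence of $E_1$ on the parameters $(s_1^\R,s_2^\R)$. This requires combining the classical analytic dependence of monodromy data on ODE coefficients with the real-analytic dependence of the p.d.e.\ solution $(w_0,w_1)$ on its Stokes/asymptotic parameters. Each ingredient is standard, but in the tt*-Toda setting one should check that the interior of (\ref{all}) is connected---which it is, being the intersection of two open half-planes with the region above a parabola---so that the identity theorem actually delivers the global conclusion from the local one.
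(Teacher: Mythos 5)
Your proposal is correct and follows essentially the same route as the paper: the Stokes formulas are taken from Corollary \ref{ambiguity}, the identity $E_1=\tfrac14 C \Qi_{\frac34}$ holds by construction on the open region (\ref{triangle}) where the Riemann--Hilbert problem is solved, and the extension to all smooth solutions is obtained from the analytic dependence of the monodromy data on the coefficients of the meromorphic o.d.e. Your added remarks on the connectedness of the interior of (\ref{all}) and the identification via uniqueness of solutions with prescribed Stokes data simply make explicit what the paper leaves implicit.
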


The special nature of the connection matrix (essentially just the constant matrix $C$) is to be expected, as our solutions are generalizations of the smooth solutions of the sinh-Gordon equation obtained by the Riemann-Hilbert method in \cite{FIKN06}.  In general, four real
parameters would be needed to describe arbitrary (locally defined) solutions, the two Stokes parameters $(s_1^\R,s_2^\R)$ and two additional parameters in the connection matrix.  Our calculation shows that the Stokes parameters determine these additional parameters, for the solutions which are globally smooth on $\C^\ast$.

The geometry of the moduli space of (locally defined) solutions is complicated.  It has been investigated for the sinh-Gordon equation in \cite{FIKN06},\cite{ItNi11} by Riemann-Hilbert methods and by other authors using different methods.  Little is known for the tt* equations in general.  Even if attention is restricted to the \ll globally smooth\rr solutions, it is a subtle matter to describe these in terms of the monodromy data.  The work of Cecotti and Vafa has inspired a number of conjectures and results in this direction, notably in  \cite{Sa08},\cite{HeSe07},\cite{HeSa11}.  In particular, 
Conjecture 10.2 of \cite{HeSe07}, namely 
the smoothness criterion \ll $S+S^t>0$\rr  (where $S$ is the Stokes matrix), was
established in \cite{HeSa11}.   This general result implies our Theorem \ref{positivity}.  
However, our approach (combined with our previous work \cite{GuLiXX},\cite{GuItLiXX})
gives a necessary and sufficient smoothness condition in our situation (and in fact for the tt*-Toda equations in general).  We shall explain this next.

At this point it will be convenient to adjust our conventions so that our Stokes matrices are real.  This can be achieved by using
\[
\tilde P_0=P_0 \,\dz,\quad 
\tilde P_\infty=P_\infty\,\di
\]
instead of $P_0,P_\infty$, where
$\dz=\diag(1,\om^{\frac12},\om,\om^{\frac32})=\di^{-1}$.  We obtain formal solutions
$\tPsiz=\Psiz \dz, \tPsii=\Psii\di$, i.e.\ 
\begin{gather*}
\tPsiz_f = P_0\left( I + \sum_{k\ge 1} \psiz_k \ze^k \right) e^{\frac1\ze  d_4}\dz=
\tilde P_0\left( I + \sum_{k\ge 1} \tpsiz_k \ze^k \right) e^{\frac1\ze  d_4}
\\
\tPsii_f = P_\infty\left( I + \sum_{k\ge 1} \psii_k \ze^{-k} \right) e^{x^2 \ze  d_4}\di
=\tilde P_\infty\left( I + \sum_{k\ge 1} \tpsii_k \ze^{-k} \right) e^{x^2 \ze  d_4}.
\end{gather*}
The matrices
\[
\tQz_k=\dz^{-1} \Qz_k \dz, \quad \tQi_k=\di^{-1} \Qi_k \di
\]
are now real.  Furthermore, they satisfy $\tQz_k=\tQi_k$ (by Lemma \ref{zandi}), and have the following symmetries (by  Lemma \ref{stokessymmetries}):
 
\no
(1) $\tQi_{k+\scriptstyle\frac12} = \tPi\, \tQi_k \, \tPi^{-1}$,\ 
$\tPi=\di^{-1} \Pi\di=
\om^{-\frac12}
\bsp
  &  1  & & \\
 & &  1  & \\
  & & &  1\\
-1    & & &
\esp$

\no
(2) $\tQi_{k+1} = \tQi_k{}^{-t}$

\no
(3) $\tQi_k= \tilde C \overline{\tQi_{ {\scriptstyle\frac32} - k}  } {}^{-1} \tilde C$,\ 
$\tilde C=\di^{-1}  C \di=
\bsp
1\! & & & \\
 & & & \!-1\!\\
 & & \!-1\! & \\
 & \!-1\! & &
\esp
$

\no The connection matrix
$\tilde E_1=\tfrac14 \tQi_{\frac34} \tilde C$ is also real.  The Stokes matrices 
$\tSi_k=\di^{-1} \Si_k \di$ satisfy
\[
\tSi_1=(\tQi_1 \tQi_{1\frac14}\tPi)^2 \tPi^2,  \quad
\tSi_2= \tPi^2 (\tQi_1 \tQi_{1\frac14}\tPi)^2 = (\tSi_1)^{-t}.
\]
To simplify notation, let us write
\[
S=\tSi_1,\quad \hat \Pi=\bsp
  &  1  & & \\
 & &  1  & \\
  & & &  1\\
-1    & & &
\esp
\]
from now on.  Then the monodromy is
\[
S S^{-t} =  (\tQi_1 \tQi_{1\frac14}\tPi)^4
= - (\tQi_1 \tQi_{1\frac14} \hat \Pi)^4.
\]  
The characteristic polynomial of $\tQi_1 \tQi_{1\frac14} \hat \Pi$
is 
\[
p(\mu)=\mu^4 + s_1^\R \mu^3 - s_2^\R \mu^2 + s_1^\R \mu + 1.  
\]
In the Riemann-Hilbert problem (3) we just replace the jumps by their tilde versions. These are:  $\tPi^{-1} S^{-1} \tPi$ on the $\pi/8$ ray, and 
$\tPi\, S\, \tPi^{-1}$ on the $9 \pi/8$ ray.  
The tilde version of Theorem \ref{positivity} is that we have a solution for all $x$ in $(0,\infty)$ if the Hermitian matrices
$\tPi^{-1} (S^{-1}+S^{-t})\tPi$ and $\tPi (S + S^t)\tPi^{-1}$
are positive definite.  Since $\tPi$ is unitary, this is equivalent to the real symmetric matrices
$S^{-1}\!+\!S^{-t}$ and $S + S^t$ being positive definite.  Furthermore, because of the identities
\[
S^{-1}\!+\!S^{-t}=S^{-1}(I+SS^{-t}),\quad
S+S^t = (SS^{-t} + I)S^t,
\]
$S^{-1}\!+\!S^{-t}$ is positive definite if and only if
$S + S^t$ is positive definite (both are equivalent to all principal minors of $SS^{-t}+I$ being positive definite).  Thus our criterion (Theorem \ref{positivity}) coincides with the criterion of \cite{HeSa11}, namely
$S + S^t>0$.

It was pointed out already by Cecotti and Vafa that, if $S^{-1} + S^{-t}>0$, then the monodromy $SS^{-t}$ preserves the positive definite inner product defined by $S^{-1} + S^{-t}$, hence the
eigenvalues of $SS^{-t}$ must have unit length.  As we have shown in \cite{GuLiXX},\cite{GuItLiXX}, this condition on the eigenvalues is satisfied whenever the solution of 
(\ref{ost}),(\ref{as}) is smooth near $x=0$.  In particular, this is a necessary condition for smoothness of the solution on $(0,\infty)$. In our case, it turns out to be a sufficient condition. We summarize all this in the following theorem, which also provides a conceptual explanation for the explicit formulae 
(\ref{triangle}),(\ref{all}) for the regions illustrated in Fig.\ \ref{results4}. 

\begin{theorem}\label{linalg}   Let $w_0,w_1,w_2 \, (=\!\!-w_1),w_3 \, (=\!\!-w_0)$ be the solution of 
(\ref{ost}),(\ref{as})
obtained from the Riemann-Hilbert problem (3) (or (2),  (2)${\,}^\prime$).  Then we have:

\no (a) The following are equivalent:

(i) $w_0,w_1,w_2, w_3$ are smooth on $(0,\infty)$,

(ii) all roots of $p$ lie in the unit circle,

(iii) all eigenvalues of $SS^{-t}$ lie in the unit circle,

(iv) $(s_1^\R)^2  + 4s_2^\R + 8\ge 0$, $2+2s_1^\R-s_2^\R\ge 0$, $2-2s_1^\R-s_2^\R\ge 0$.

\no Moreover, all radial solutions of (\ref{ost}),(\ref{as}) (in case 4a) which are smooth on $\C^\ast$ are of this form.

\no (b) The following are equivalent:

(i) $S^{-1}\!+\!S^{-t}>0$,

(ii) $p(\om^i)>0$ for $i=0,1,2,3$,

(iii) $2+s_2^\R>0$, $2+2s_1^\R-s_2^\R>0$, $2-2s_1^\R-s_2^\R>0$.

\no  In Fig.\ \ref{results4}, conditions (a) give the closed region bounded by the heavy lines and curve, and
conditions (b) give the (interior of the) shaded region.
\end{theorem}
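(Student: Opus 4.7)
The plan is to prove parts (a) and (b) separately, using a spectral/polynomial reduction for (a) and a direct Sylvester computation for (b).

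For part (a), the equivalence (i)$\Leftrightarrow$(iv) is the p.d.e.\ existence result from \cite{GuLiXX,GuItLiXX}, already invoked in the paragraph preceding the theorem. The equivalence (ii)$\Leftrightarrow$(iii) is immediate from the identity $SS^{-t}= -(\tQi_1 \tQi_{1\frac14}\hat\Pi)^4 = -A^4$ recorded above: the eigenvalues of $SS^{-t}$ are $\{-\mu^4\}$ as $\mu$ ranges over the roots of $p$, and $|\mu|=1$ iff $|{-\mu^4}|=1$. For (ii)$\Leftrightarrow$(iv) I exploit the palindromic structure of $p$: the factorization $p(\mu) = \mu^2 q(\mu+\mu^{-1})$ with $q(u) = u^2 + s_1^\R u - (2+s_2^\R)$ reduces the question to whether both roots of the real quadratic $q$ lie in $[-2,2]$. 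The classical criterion for this (nonnegative discriminant together with $q(\pm 2)\geq 0$) unwinds exactly to the three inequalities of (iv), the ``vertex in the interval'' condition being automatic under the other two.

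For part (b), the equivalence (ii)$\Leftrightarrow$(iii) is a direct evaluation: $p(1) = 2+2s_1^\R-s_2^\R$, $p(-1) = 2-2s_1^\R-s_2^\R$, $p(i) = p(-i) = 2+s_2^\R$. For (i)$\Leftrightarrow$(ii) I form $A = \tQi_1 \tQi_{1\frac14}\hat\Pi$ and $S = -A^2 \hat\Pi^2$ explicitly, using the formulas for $\tQi_k$ and $\hat\Pi$ spelled out earlier in the paper; the resulting $S$ is upper triangular with unit diagonal, so $\det S = 1$. Next I compute the leading principal minors of the real symmetric matrix $S+S^t$ and obtain
\[
M_1 = 2,\qquad M_2 = 4-(s_1^\R)^2,\qquad M_3 = 2(2+s_2^\R)\bigl(2-s_2^\R-(s_1^\R)^2\bigr),
\]
together with $M_4 = \det(S+S^t) = p(1)\,p(-1)\,p(i)\,p(-i)$. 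The identity for $M_4$ comes from $\det(S+S^t) = \det S\cdot\det(I+SS^{-t})$ combined with $\det S = 1$ and the factorization $\prod_i(1-\mu_i^4) = \prod_{k=0}^{3} p(\omega^k)$. Since the paper has already observed $S+S^t>0\Leftrightarrow S^{-1}+S^{-t}>0$, Sylvester's criterion reduces (i) to the simultaneous positivity of $M_1,\dots,M_4$.

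The main obstacle will be the short case analysis identifying the region $\{M_1,\dots,M_4>0\}$ with the triangle (iii). A priori, the factor $2+s_2^\R$ in $M_3$ and the $q(\pm 2)$ factors in $M_4$ could all be negative simultaneously, so I must verify that $M_2>0$ (equivalently $|s_1^\R|<2$) rules out the ``spurious branch'' $s_2^\R<-2$, leaving exactly $-2<s_2^\R<2-2|s_1^\R|$. Once this is handled, the ``moreover'' assertion in part (a) follows from p.d.e.\ uniqueness in \cite{GuLiXX,GuItLiXX} together with the analytic-continuation argument already invoked: the Riemann--Hilbert family, injectively parametrized by $(s_1^\R,s_2^\R)$ on the interior of (iv), coincides with the full set of smooth radial solutions by p.d.e.\ uniqueness of those solutions.
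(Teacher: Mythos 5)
Your part (a) follows the paper's reduction essentially verbatim (the palindromic substitution $u=\mu+\mu^{-1}$, the identity $SS^{-t}=-(\tQi_1 \tQi_{1\frac14}\hat\Pi)^4$, the quadratic criterion), but in part (b) your proof of (i)$\Leftrightarrow$(ii) is genuinely different. The paper computes only the determinant $\det(S^{-1}\!+\!S^{-t})=p(1)p(\om)p(\om^2)p(\om^3)$ and then argues by deformation: positive definiteness forces unimodular eigenvalues of $SS^{-t}$, the possibility $p(\om)<0$ is excluded by moving the roots continuously to a point where positivity visibly fails without crossing $\det=0$, and conversely positivity is propagated from $(s_1^\R,s_2^\R)=(0,0)$ (where $S=I$) over the convex region $\{p(\om^i)>0\}$. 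Your explicit Sylvester computation is precisely the ``direct calculation'' that the paper mentions and defers in the proof of Theorem \ref{positivity}: your $M_2,M_3,M_4$ agree (up to positive constants) with the factorizations of $X_2,X_3,X_4$ at $x=0$ given there, and your case analysis closes correctly ($M_2>0$ excludes the branch $s_2^\R<-2$, while $p(1)<0$ and $p(-1)<0$ simultaneously force $s_2^\R>2$ and then contradict $M_3>0$). The trade-off is that your argument is elementary and self-contained, whereas the paper's deformation argument is the one that survives for general $n$ (cf.\ Remark \ref{remarks}(5)), where computing all principal minors is impractical. One slip: $S=-(\tQi_1 \tQi_{1\frac14}\hat\Pi)^2\hat\Pi^2$ is \emph{not} upper triangular ($\tQi_1$ has a nonzero $(2,1)$ entry), though $\det S=1$ still holds because each Stokes factor is unipotent and $(\det\tPi)^4=1$.

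There is, however, one step that fails as written. In (a) you assert that the ``vertex in the interval'' condition for $q(u)=u^2+s_1^\R u-(2+s_2^\R)$ is automatic given the nonnegative discriminant and $q(\pm2)\ge0$. It is not: at $(s_1^\R,s_2^\R)=(10,-20)$ all three inequalities of (iv) hold ($28\ge0$, $42\ge0$, $2\ge0$), yet $q(u)=u^2+10u+18$ has roots $-5\pm\sqrt7$, both outside $[-2,2]$, so the roots of $p$ are not unimodular. The three inequalities of (iv) cut out an unbounded set with two ``wings'' beyond the points $s_1^\R=\pm4$ where the lines are tangent to the parabola; the correct region (the compact one of Fig.\ \ref{results4}, i.e.\ the image of the $(\ga_0,\ga_1)$-rectangle) requires the additional constraint $|s_1^\R|\le4$, equivalently $-2\le -s_1^\R/2\le 2$. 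To be fair, the paper's own proof elides exactly the same point, but you make the false claim explicit, so your argument needs the extra inequality. Relatedly, quoting (i)$\Leftrightarrow$(iv) as ``the p.d.e.\ result'' skips the one substantive step in the paper's proof of (a): the p.d.e.\ result lives in $(\ga_0,\ga_1)$-coordinates, and identifying its image in the $(s_1^\R,s_2^\R)$-plane with the unimodularity locus is the fundamental-domain/covering-map argument — which is also what supplies $|s_1^\R|=|2\cos\th_1+2\cos\th_2|\le4$ and repairs the gap above.
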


\begin{proof} (a) The region of the $(\ga_0,\ga_1)$-plane given by Theorem A of \cite{GuItLiXX} is
\begin{equation}\label{pderegion}
-1\le \ga_0\le 3,\ -3\le \ga_1\le 1,\ \ga_0-\ga_1\le 2.
\end{equation}
Theorem \ref{alldata} gives the corresponding region (i) of the $(s_1^\R,s_2^\R)$-plane.  It follows from Appendix C that these (globally smooth) solutions are a subset of the (smooth near infinity) solutions which arise from the Riemann-Hilbert problem (3) (or (2),  (2)${\,}^\prime$.

To prove the equivalence of (i) and (ii),   observe that
\[
\mu^{-2} p(\mu)=(\mu +\mu^{-1})^2 + s_1^\R(\mu +\mu^{-1}) - (2+s_2^\R).
\] 
It follows that all roots $\mu$ of $p$ lie in the unit circle if and only if all roots $x$ of the quadratic
\[
P(x)=x^2 + s_1^\R x - (2+s_2^\R)
\]
lie in the interval $[-2,2]$, i.e.\ are of the form $2\cos\th_1,2\cos\th_2$ for
some $\th_1,\th_2\in\R$.  By Theorem \ref{alldata},  the points of (i) are indeed of this form,
with $\th_1=\pm\tfrac\pi4 { (\ga_0+1)}$, 
$\th_2=\pm\tfrac\pi4 { (\ga_1+3)}$.  Thus, (i) implies (ii). On the other hand, 
it is easy to verify that the region
\[
\{(\th_1,\th_2)\in\R^2 \st 0\le \th_1,\th_2 \le \pi \text{ and } \th_1\le \th_2 \}
\]
is a fundamental domain for the (branched) covering map
\[
(\th_1,\th_2)\mapsto (\cos\th_1+\cos\th_2,\cos\th_1\cos\th_2).  
\]
Our region (\ref{pderegion}) is exactly this region. Thus (i) and (ii) are equivalent.

Next, from $S S^{-t} = - (\tQi_1 \tQi_{1\frac14} \hat \Pi)^4$,
it is clear that (ii) is equivalent to (iii).   It remains to establish the explicit description (iv).  Let us use the criterion (above) that all roots of $P(x)=x^2 + s_1^\R x - (2+s_2^\R)$ lie in the interval $[-2,2]$.  This is equivalent to  (1) the condition that $P$ has only real roots, i.e.\ 
$(s_1^\R)^2+4(2+s_2^\R)\ge 0$, together with (2) the condition that these roots lie in the interval $[-2,2]$, which (as $P$ is quadratic) means $P(-2)\ge0$, $P(2)\ge0$, i.e.\ 
$2-2s_1^\R-s_2^\R\ge 0$, $2+2s_1^\R-s_2^\R\ge 0$.   Conditions (1),(2) together give the region (iv).

(b) First we note that 
$
\det(S^{-1}+S^{-t}) = \det S^{-1} \det(I+ SS^{-t})
= \det( SS^{-t} + I)= 
 \det(I- (\tQi_1 \tQi_{1\frac14} \hat \Pi)^4).
 $
The identity
\[
X^4 - I  =  (X-1)(X-\om)(X-\om^2)(X-\om^3),
\]
with $X=\tQi_1 \tQi_{1\frac14} \hat \Pi$,  shows that
\[
\det(   (\tQi_1 \tQi_{1\frac14} \hat \Pi)^4 - I )
=
p(1)p(\om)p(\om^2)p(\om^3).
\]
Thus
\[
\det(S^{-1}+S^{-t})=
(2+2s_1^\R-s_2^\R) (2+s_2^\R)(2-2s_1^\R-s_2^\R) (2+s_2^\R).
\]
Let us assume that (i) holds.  Then the monodromy $SS^{-t}$ preserves the positive definite inner product defined by $S^{-1}\!+\!S^{-t}$, hence the
eigenvalues of $SS^{-t}$ must have unit length, and we are in the situation of (a).  From the proof of (a), the roots of $p$ are of the form $e^{\pm\i \th_1}, e^{\pm\i \th_2}$ with $0\le \th_1\le \th_2 \le\pi$.  
Since $\det(S^{-1}\!+\!S^{-t})>0$, we have $p(\om^i)\ne 0$ for all $i$.  Explicitly, 
\begin{align*}
p(\om^i)
&=(\om^i - e^{\i \th_1})(\om^i - e^{-\i \th_1})(\om^i - e^{\i \th_2})(\om^i - e^{-\i \th_2})
\\
&=
\begin{cases}
-4\cos\th_1 \cos\th_2 \quad\text{ if $i=1,3$}
\\
(2\pm 2\cos\th_1)(2\pm 2\cos\th_2) \quad\text{ if $i=0,2$.}
\end{cases}
\end{align*}
Thus $p(\om^0),p(\om^2)>0$ and it suffices to examine  $p(\om^1) =p(\om^3)$.
If $p(\om^1)<0$ then $\cos\th_1$ and $\cos\th_2$ have the same sign, so
$0<\th_1\le\th_2<\tfrac\pi2$ or $\tfrac\pi2<\th_1\le\th_2<\pi$.  Then
we may move both of $e^{\i \th_1}, e^{\i \th_2}$ continuously to  $e^{\pi \i/ 4 }$, or
to $e^{ 3\pi \i/ 4}$, without violating the condition $\det(S^{-1}\!+\!S^{-t})>0$.
It is easily checked that $S^{-1}\!+\!S^{-t}$ is not positive definite for these values.
We conclude that (ii) holds.

Conversely, if (ii) holds, then the linear inequalities $p(\om^i)>0$ define a (convex) connected region of the $(s_1^\R,s_2^\R)$-plane, and it suffices to check that $S^{-1}\!+\!S^{-t}$ is positive definite for at least one point of this region.  The region contains $(s_1^\R,s_2^\R)=(0,0)$,  and for this point we have $S=I$, so $S^{-1}\!+\!S^{-t}$ is positive definite on the entire region, i.e.\ (i) holds.

This completes the proof of the equivalence of (i) and (ii).  From the formula $p(\mu)=\mu^4 + s_1^\R \mu^3 - s_2^\R \mu^2 + s_1^\R \mu + 1$ we obtain (iii).
\end{proof}

\begin{remark}\label{remarks}
(1) 
It follows from the above description of the roots of $p$ that (the bounding curves of) region (a) can be obtained from the discriminant
\[
\text{disc}(p)=
((s_1^\R)^2  + 4s_2^\R + 8)^2(2+2s_1^\R-s_2^\R)(2-2s_1^\R-s_2^\R).
\]

\no(2) The proof of Theorem \ref{linalg} shows that the region (b) may be characterized as the subregion of (a) for which the roots $e^{\pm\i \th_1}, e^{\pm\i \th_2}$ of $p$ interlace with the roots of unity $1,\om,\om^2,\om^3$ (cf.\ \cite{BeHe89}, Corollary 4.7, where a similar criterion is given by Beukers and Heckman in the context of the hypergeometric equation). 

\no(3) When the roots of $p$ are in the unit circle, i.e. $(s_1^\R,s_2^\R)$ is in the region (a), and the $s_1^\R,s_2^\R$ are integers, an observation of Kronecker (\cite{Kr1857}) implies that $p$ must be a product of cyclotomic polynomials.  Conversely, any product of cyclotomic polynomials which is
palindromic and of degree $4$ has the form of $p$ with $s_1^\R,s_2^\R$ integers.  There are exactly 19 such polynomials, and these correspond to the 19 points of the region (a) which
correspond to \ll physical solutions\rr of the tt*-Toda equations (cf.\ \cite{GuLi2XX}).  This is, in fact, the original approach suggested by Cecotti and Vafa for the classification of physical solutions of the tt* equations.  Theorem \ref{linalg} justifies this approach in the case of the tt*-Toda equations.
 
\no(4) Theorem \ref{linalg} (a) links the tt*-Toda equations with the Poisson geometry of spaces of meromorphic connections (cf.\ \cite{Bo01}).  In particular, the convexity of the region in the $(\ga_0,\ga_1)$-plane can be understood in these terms.  This will be discussed in \cite{GuHoXX}.

\no(5) The method of proof of Theorem \ref{linalg} applies to the tt*-Toda equations in general.    For (a) this follows from the fact that the characteristic polynomial of $\tQi_1 \tQi_{1\scriptstyle\frac1{n+1}} \Pi$ is palindromic (or anti-palindromic), because of the anti-symmetry condition.  Namely, any (real) palindromic polynomial factors into quartic (and possibly quadratic or linear) palindromic factors, and the arguments above apply.  This description of the roots of $p$ allows one to deduce (b), as in the proof above.
Remarks (1)-(4) extend also to the general case, thanks to this description.
\end{remark}

In Appendix B we summarize the formulae for the remaining cases covered by Theorem A of \cite{GuItLiXX}, which are very similar.

\section{The Fredholm determinant approach of Tracy-Widom}\label{tracywidom}

When $w_0,\dots,w_n$ reduce to one unknown function, the tt*-Toda equations
are the radial sinh-Gordon equation $w_{z\zbar}=\sinh w$ or the radial 
Tzitzeica (Bullough-Dodd) equation $w_{z\zbar}=e^{w}-e^{-2w}$.   Cecotti and Vafa were able to
analyze the \ll physical solutions\rr in this situation by appealing to the
pioneering work of McCoy, Tracy, and Wu \cite{MTW77} and Kitaev \cite{Ki89}.
A simpler and more general approach to equations of this type was given subsequently by Tracy and Widom,
and in
\cite{TrWi98} they studied a class of solutions to (\ref{ost}).  Although they did not identify this
with the class of solutions with are smooth on $(0,\infty)$,
they gave explicit formulae for the asymptotics of these solutions at $0$ and $\infty$.  
In this section we explain briefly the relation with our approach.  We shall discuss the asymptotics more thoroughly in a separate article.

The equations of \cite{TrWi98} are
\begin{equation}\label{twost}
\tfrac14(q_k^\prr(t)+t^{-1}q_k^\pr(t))=-e^{q_{k+1}-q_k} + e^{q_{k}-q_{k-1}}
\end{equation}
with $q_k=q_{k+N}$ and $q_k=q_k(t)$, where $t\in(0,\infty)$. 
The solutions in the above class are expressed in terms of Fredholm determinants
\[
q_k=\log\det (I-\la K_k) - \log\det (I-\la K_{k-1})
\]
where the operators $K_k$ are defined by
\[
K_k(f)(u)=\int_0^\infty K_k(u,v) f(v) dv
\]
and the kernel $K_k(u,v)$ is
\[
K_k(u,v)=\sum_{j=1}^N \om_j^k c_j \frac{e^{-t[(1-\om_j)u+(1-w_j^{-1})u^{-1}]}}{-\om_j u + v}.
\]
Here, $\om_j=(e^{2\pi\i/N})^j$ ($1\le j\le N$)  and $c_1,\dots,c_N$ are complex parameters.
The condition $c_{\omega} = -\omega^{3}c_{\omega^{-1}}$  corresponds to the condition
$q_k+q_{N-k-1}=0$, so let us impose this.  

According to \cite{TrWi98}, these solutions have the asymptotics
\begin{equation}\label{twasymp}
q_k(t)\sim 2(\al_k - k)\log t \quad \text{ as $t\to 0$}
\end{equation}
where $\al_1,\dots,\al_N$ are described in terms of $c_1,\dots,c_N$ as follows:
the real numbers
$\al_1(\la),\dots,\al_N(\la)$ are the zeros of the function
\[
h(s)=\sin\pi s -\la \pi \sum_{j=1}^N c_j (-\om_j)^{\al-1},
\]
where it is {\em assumed} that there exists a continuous path 
$\{ \la_t \st 0\le t\le 1\}$ from $\la_0=0$ to $\la_1=1$ with
the properties

\no(P1) $\al_k(\la_t) < \al_{k+1}(\la_t)$

\no(P2) $k-1 < \al_k(\la_t) < k+1$

\no and $\al_k(0)=k$, $\al_k(1)=\al_k$.   (It was conjectured that (P2) is redundant.)
Equivalently (see section 4 of \cite{TrWi98}), 
$z_1=e^{\pi\i\al_1/N},\dots,z_N=e^{\pi\i\al_N/N}$ are the roots of
the polynomial
\begin{equation}\label{twpoly}
z^{2N} + 2\pi\ii\la \sum_{j=1}^{N-1} c_j \om_j^{-1} z^{2j-N}  - 1,
\end{equation}
subject again to (P1),(P2).  

Now let us consider the case $N=4$.  We have $q_1+q_2=0$, $q_3+q_4=0$,
and also $c_1 \om_1^{-1}=-c_1\ii$,   $c_2 \om_2^{-1}=-c_2$,   $c_3 \om_3^{-1}=c_1$.
The system (\ref{twost}) coincides with our system (\ref{ost}) if we take
$x=t$, $n=3$, and either

(I) $2w_0=q_2$, $2w_1=q_3$\  \ i.e.\ $\ga_0=2(\al_2-2)$, $\ga_1=2(\al_3-3)$

\no or

(II) $2w_0=q_4$, $2w_1=q_1$\  \  i.e.\ $\ga_0=2(\al_4-4)$, $\ga_1=2(\al_1-1)$.

\no It follows from this that the roots 
$\mu=  
e^{\pm\tfrac\pi4 {\scriptstyle (\ga_0+1)} },
e^{\pm\tfrac\pi4 {\scriptstyle(\ga_1+3)} }
$
of our polynomial 
\[
p(\mu)=\mu^4 + s_1^\R \mu^3 - s_2^\R \mu^2 + s_1^\R \mu + 1
\]
from section \ref{vanishing} are related to the roots 
$z^2=e^{\pi\i\al_k/2}$ ($1\le k\le 4$)
 of the polynomial (\ref{twpoly}) by
\[
\text{ (I) $\mu=z^2e^{-3\pi\i/4}$ or (II) $\mu=z^2e^{\pi\i/4}$.}
\]
Comparing $p$ with (\ref{twpoly}), we obtain

(I) $s_1^\R=-2\pi\ii \la c_1 e^{\pi\i/4}$,\quad $s_2^\R=2\pi\ii \la c_2 e^{\pi\i/2}$

\no or

(II) $s_1^\R=2\pi\ii \la c_1 e^{\pi\i/4}$,\quad $s_2^\R=2\pi\ii \la c_2 e^{\pi\i/2}$.

\no i.e.\ the parameters $c_1,c_2$ are essentially our Stokes parameters.  
The involution $(w_0,w_1)\mapsto (-w_1,-w_0)$ relates (I) and (II); it reverses the sign of $s_1^\R$.

Our Theorem \ref{linalg} (which relies on Theorem A of \cite{GuItLiXX}) allows us to verify the above assumption concerning the existence of the continuous path $\la_t$.  Namely, the region of smooth solutions is given by the conditions $\ga_{i+1}-\ga_i\ge -2$, which are equivalent to 
the conditions $\al_{i+1}-\al_i\ge 0$. The linear path $\la_t=t\la$ has the required properties:  this corresponds to a path in the $(s_1^\R,s_2^\R)$-plane from $(0,0)$ to the point $(s_1^\R,s_2^\R)$, and it lies entirely within the region (a) of smooth solutions, as this region is star-shaped (Fig.\ \ref{results4}).

We remark that the path $\la_t$ is needed only to explain how to recover $\ga_0,\ga_1$ from $s_1^\R,s_2^\R$.  Theorem \ref{alldata} expresses $s_1^\R,s_2^\R$ explicitly in terms of $\ga_0,\ga_1$, but to go in the opposite direction it is necessary to solve the polynomial equation $p(\mu)=0$ to obtain 
$e^{\pm\tfrac\pi4 {\scriptstyle (\ga_0+1)} },
e^{\pm\tfrac\pi4 {\scriptstyle(\ga_1+3)} }$, then specify the correct point in the inverse image of the covering map --- for example by specifying a path to be lifted.

Of course (P1) gives exactly the interior of the region.  Condition (P2) should be deleted, as it corresponds to the region $-2<\ga_0,\ga_1<2$, which (if imposed) would give only a proper subset of the interior.

\section{Appendix A: Various matrices}

\no{\em Frequently used constants:}

\medskip

\[
\Pi=
\bp
  & \!1\! & & \\
 & & \!1\! & \\
  & & & \!1\\
1\!   & & &
   \ep
\ \ \ 
\Om=
\bp
1 & 1& 1 & 1\\
1 & \om & \om^2 & \om^3 \\
1 & \om^2 & \om^4 & \om^6 \\
1 & \om^3 & \om^6 & \om^9 
\ep
\ \ \ 
d_4=
\bp
1 & & &\\
 & \!\om\! & & \\
  & & \!\om^2\! & \\
  & & & \!\!\om^3
\ep
\]

\[
\De =
\bp
 & & & \!1\\
 & & \!1\! & \\
 & \!1\! & &\\
 1\! & & &
\ep
\ \ \ 
C=\bp
1\! & & & \\
 & & & \!1\\
 & & \!1\! & \\
 & \!1\! & &
\ep
\]

\no{\em Useful identities:}

\medskip

(F1) $\bar\Om \Om=4I$ 
\quad 
(F2) $\Pi\Om=\Om d_4$
\quad
(F3) $\Pi C =\De = C\Pi^{-1}$

(F4) $d_4 C  d_4 = C$
\quad
(F5) $\Om d_4 \Om=4\De$, $\Om\De\Om=4d_4^{-1}$

(F6) $C=\Om\bar\Om^{-1}=\tfrac14\Om^2=4\Om^{-2}$
\quad
(F7) $\Pi d_4=\om d_4\Pi$, $\Pi^2 d_4=-d_4\Pi^2$.

\no{\em The matrices $\Qi_k$:}

\[
\Qi_{1}=
\bp
1 & & & \\
\om^{\scriptstyle\frac32} s_1^{\R} & 1 & & \\
 & & 1 & \om^{\scriptstyle\frac12} s_1^{\R} \\
  & & & 1
\ep
\ \ \ \ 
\Qi_{1\scriptstyle\frac14}=
\bp
1 & & & \\
 & 1 & &\om^{3} s_2^{\R} \\
 & & 1 &  \\
  & & & 1
\ep
\]

\[
\Qi_{1\scriptstyle\frac12}=
\bp
1 & & & \om^{\scriptstyle\frac32} s_1^{\R}\\
 & 1 &\om^{\scriptstyle\frac12} s_1^{\R} & \\
 & & 1 &  \\
  & & & 1
\ep
\ \ \ \ 
\Qi_{1\scriptstyle\frac34}=
\bp
1 & &\om^{3} s_2^{\R} & \\
 & 1 & & \\
 & & 1 &  \\
  & & & 1
\ep
\]

\[
\Qi_{2}=
\bp
1 &\om^{\scriptstyle\frac12} s_1^{\R} & & \\
 & 1 & & \\
 & & 1 &  \\
  & &\om^{\scriptstyle\frac32} s_1^{\R} & 1
\ep
\ \ \ \ 
\Qi_{2\scriptstyle\frac14}=
\bp
1 & & & \\
 & 1 & & \\
 & & 1 &  \\
  & \om^{3} s_2^{\R}& & 1
\ep
\]

\[
\Qi_{2\scriptstyle\frac12}=
\bp
1 & & & \\
 & 1 & & \\
 &\om^{\scriptstyle\frac32} s_1^{\R} & 1 &  \\
\om^{\scriptstyle\frac12} s_1^{\R}  & & & 1
\ep
\ \ \ \ 
\Qi_{2\scriptstyle\frac34}=
\bp
1 & & & \\
 & 1 & & \\
\om^{3} s_2^{\R} & & 1 &  \\
  & & & 1
\ep
\]

\section{Appendix B: Summary of results for other cases}

Equations (\ref{ost}),(\ref{as}) depend on two integers $n,l$.  There are precisely ten cases where  $w_0,\dots,w_n$ reduce to two unknown functions 
(see section 2 of \cite{GuItLiXX}).  Then (\ref{ost}),(\ref{as}) reduce to a system of the form
\begin{equation*}
\begin{cases}
u_{z\zbar}&= \ e^{au} - e^{v-u} 
\\
v_{z\zbar}&= \ e^{v-u} - e^{-bv}
\end{cases}
\end{equation*}
where $a,b\in\{1,2\}$.   In this article we have investigated only case 4a.  To extend these results to the remaining nine cases, it suffices to treat case 5a and case 6a, as the other cases are easily related to these.  In this appendix we summarize the results for these two cases. Notation not explained here can be found in \cite{GuItLiXX}.

\no{\em Case 5a:}

$\om=e^{2\pi\i/5}$

$\dz=d_5^3=\di^{-1}$

$\tPi=\om^2 \Pi$

$S S^{-t} =  (\tQi_1 \tQi_{1\frac15}\tPi)^5
=  (\tQi_1 \tQi_{1\frac15} \Pi)^5$

\no The characteristic polynomial of $\tQi_1 \tQi_{1\frac15} \Pi$
is 
\begin{align*}
p(\mu)
&=-(\mu^5 - s_1^\R \mu^4 - s_2^\R \mu^3 + s_2^\R \mu^2 
+ s_1^\R \mu - 1)
\\
&=(1-\mu)(\mu^4 -(s_1^\R - 1)\mu^3 + (1-s_1^\R-s_2^\R)\mu^2 - (s_1^\R-1)\mu + 1).
\end{align*}
The identity
 \[
    X^5 + I    =
    (X-\om^{\frac12}) (X-\om^{\frac32}) (X-\om^{\frac52}) 
    (X-\om^{\frac72}) (X-\om^{\frac92}   )
 \]
 shows that
 \begin{align*}
 \det\left(   (\tQi_1 \tQi_{1\frac15} \Pi)^5 + I   \right)
 &=
 p(\om^{\frac12})p(\om^{\frac32})p(\om^{\frac52})p(\om^{\frac72})
 p(\om^{\frac92})
 \\
 &=
 p(\om^{\frac12})^2p(\om^{\frac32})^2p(\om^{\frac52}).
\end{align*}
As in Theorem \ref{linalg}, we see that the region 
where $S^{-1}\!+\!S^{-t}>0$ is given by 
$p(\om^{\frac12})>0, p(\om^{\frac32})>0, p(\om^{\frac52})>0$, i.e.\ 
\[
 2+ 
 {\scriptstyle  \frac{-1-\sqrt 5}{2} } s_1^\R + 
 {\scriptstyle  \frac{1-\sqrt 5}{2}  } s_2^\R>0,
 2+ 
{\scriptstyle  \frac{-1+\sqrt 5}{2}  } s_1^\R + 
{\scriptstyle  \frac{1+\sqrt 5}{2}  } s_2^\R>0, 
2+2s_1^\R-2s_2^\R>0.
 \]
This is the shaded region  of Fig.\ \ref{results5}. 

\begin{figure}[h]
\begin{center}
\includegraphics[scale=0.4, trim= 40  120  0  120]{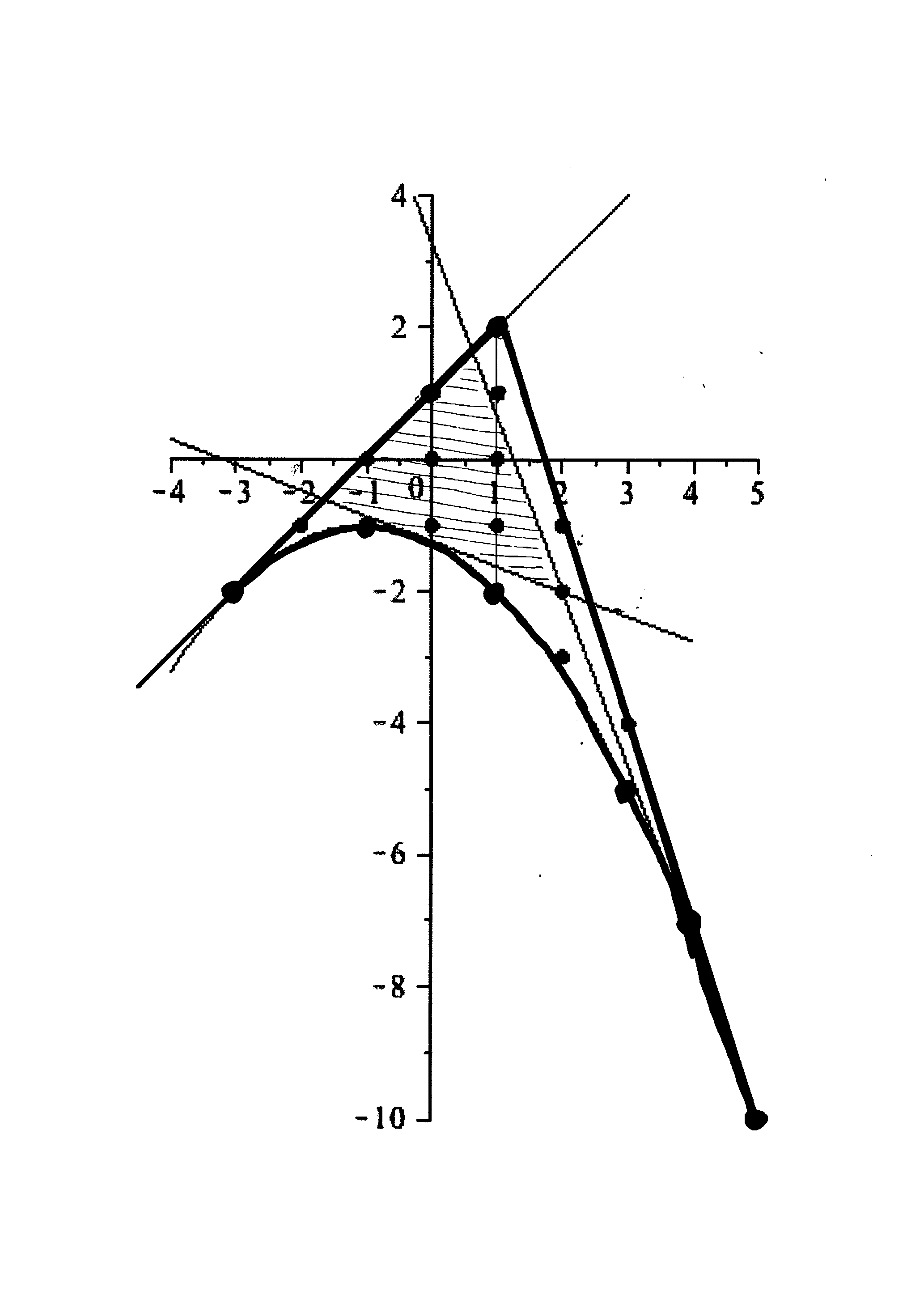}
\end{center}
\caption{Solutions of the tt*-Toda equations (case 5a).}\label{results5}
\end{figure}

The region of smooth solutions (from Theorem A of \cite{GuItLiXX}) is the region where all roots of
$P(x)=x^2 +(1-s_1^\R)x-(1+s_1^\R+s_2^\R)$ lie in the interval $[-2,2]$.  As in Theorem \ref{linalg}, we see that this is given by 
\[
(s_1^\R)^2+2s_1^\R+4s_2^\R+5\ge0,\ 
5-3s_1^\R-s_2^\R\ge0,\ 
1+s_1^\R-s_2^\R\ge0.
\]
This is the (closed) region bounded by heavy lines in Fig.\ \ref{results5}. The dots indicate the integral points in this region.  

The roots of $p$ are $1, e^{\pm\i \th_1}, e^{\pm\i \th_2}$ with 
$\th_1=\tfrac\pi5 { (\ga_0+6)}$, 
$\th_2=\tfrac\pi5 { (\ga_1+8)}$
and $0\le \th_1\le \th_2 \le\pi$.  
The region where $S^{-1}\!+\!S^{-t}>0$ is characterized by the interlacing condition
\[
0<\tfrac\pi5 < \th_1 < \tfrac{3\pi}5 < \th_2 < \pi.   
\]This follows from the
explicit formulae
\begin{align*}
 p(\om^{\frac12})&=8\cos \tfrac{2\pi}5 (\cos \th_1 - \cos \tfrac{\pi}5)(\cos \th_2 - \cos \tfrac{\pi}5)
 \\
 p(\om^{\frac32})&=-8\cos \tfrac{2\pi}5 (\cos \th_1 - \cos \tfrac{3\pi}5)(\cos \th_2 - \cos \tfrac{3\pi}5)
 \\
 p(\om^{\frac52})&=8(\cos \th_1 + 1)(\cos \th_2 + 1)\  (\ge 0)
\end{align*}
as the conditions
$p(\om^{\frac12})>0, p(\om^{\frac32})>0$
mean that $\th_1,\th_2$ lie on the same side of $\tfrac{\pi}5$
but opposite sides of $\tfrac{3\pi}5$.

\no{\em Case 6a:}

$\om=e^{2\pi\i/6}$

$\dz=d_6=\di^{-1}$

$\tPi=\om^{-1}\Pi$

$S S^{-t} =  (\tQi_1 \tQi_{1\frac16}\tPi)^6
=  (\tQi_1 \tQi_{1\frac16} \Pi)^6$

\no The characteristic polynomial of $\tQi_1 \tQi_{1\frac16} \Pi$
is 
\begin{align*}
p(\mu)
&=\mu^6 + s_1^\R \mu^5 - s_2^\R \mu^4 + s_2^\R \mu^2 
- s_1^\R \mu - 1
\\
&=(\mu-1)(\mu+1)(\mu^4 + s_1^\R\mu^3 +(1- s_2^\R)\mu^2 + s_1^\R\mu + 1).
\end{align*}
The identity
 \[
    X^6 + I    =
(X-\om^{\frac12}) (X-\om^{\frac32}) (X-\om^{\frac52}) (X-\om^{\frac72}) 
(X-\om^{\frac92}) (X-\om^{\frac{11}2})
 \]
 shows that
 \begin{align*}
 \det\left(  (\tQi_1 \tQi_{1\frac16} \Pi)^6 - I \right)
 &=
 p(\om^{\frac12})p(\om^{\frac32})p(\om^{\frac52})p(\om^{\frac72})
 p(\om^{\frac92})p(\om^{\frac{11}2})
 \\
 &=
 p(\om^{\frac12})^2p(\om^{\frac32})^2p(\om^{\frac52})^2.
 \end{align*}
As in Theorem \ref{linalg}, we see that the region 
where $S^{-1}\!+\!S^{-t}>0$ is given by 
$p(\om^{\frac12})<0, p(\om^{\frac32})<0, p(\om^{\frac52})<0$, i.e.\ 
\[
2+\sqrt3 s_1^\R-s_2^\R>0,\ 
2+2s_2^\R>0,\ 
2-\sqrt3 s_1^\R-s_2^\R>0.
\]
This is the shaded region  of Fig.\ \ref{results6}. 

\begin{figure}[h]
\begin{center}
\includegraphics[scale=0.4, trim= 40  120  0  120]{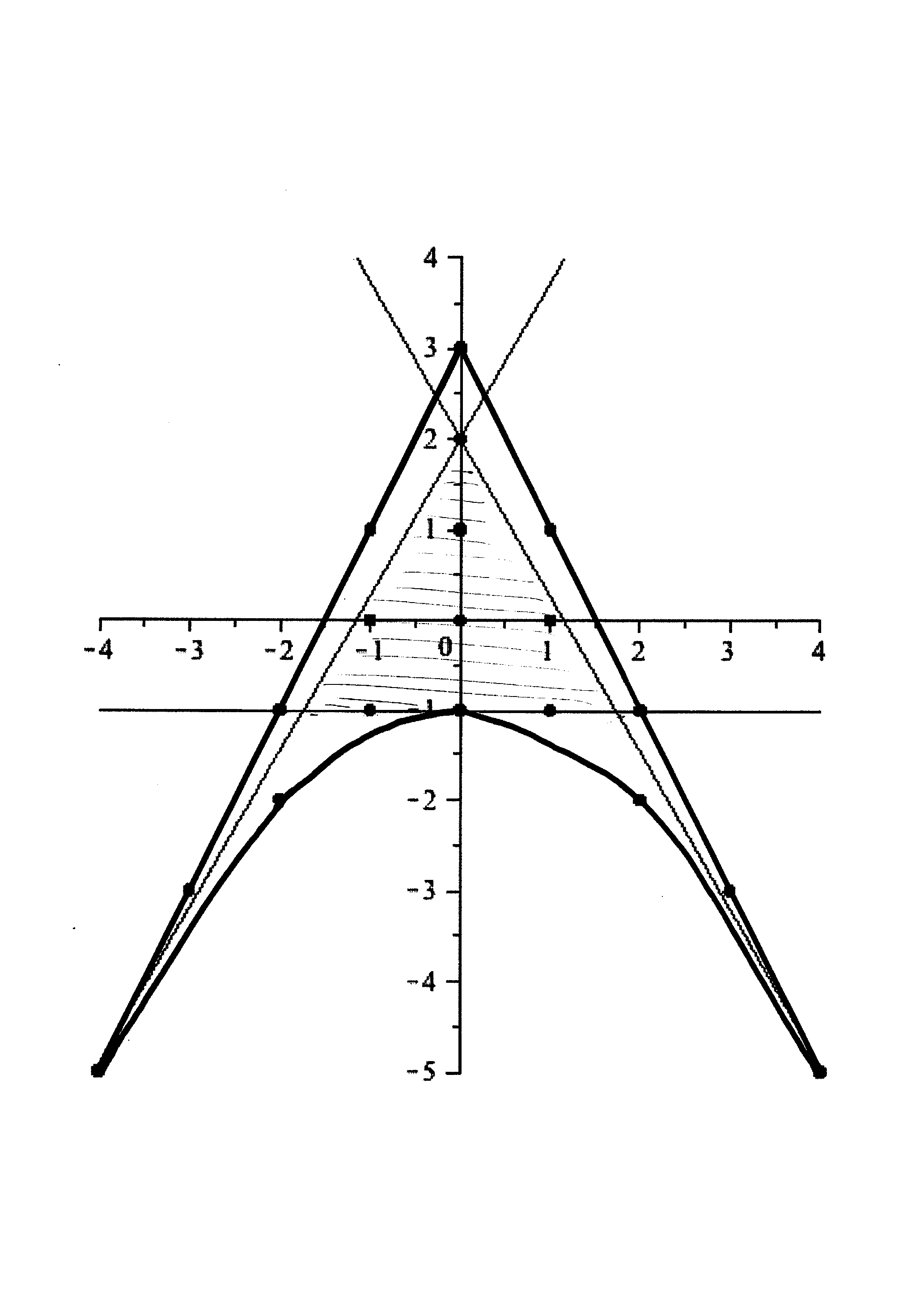}
\end{center}
\caption{Solutions of the tt*-Toda equations (case 6a).}\label{results6}
\end{figure}

The region of smooth solutions (from Theorem A of \cite{GuItLiXX}) is the region where all roots of
$P(x)=x^2 +s_1^\R x-(1+s_2^\R)$ lie in the interval $[-2,2]$.  As in Theorem \ref{linalg}, we see that this is given by 
\[
(s_1^\R)^2+4s_2^\R+4\ge0,\ 
3-2s_1^\R-s_2^\R\ge0,\ 
3+2s_1^\R-s_2^\R\ge0.
\]
This is the (closed) region bounded by heavy lines in Fig.\ \ref{results6}. The dots indicate the integral points in this region. 

The roots of $p$ are $\pm 1, e^{\pm\i \th_1}, e^{\pm\i \th_2}$ with 
$\th_1=\tfrac\pi6 { (\ga_0+2)}$, 
$\th_2=\tfrac\pi6 { (\ga_1+4)}$
and $0\le \th_1\le \th_2 \le\pi$.  
The region where $S^{-1}\!+\!S^{-t}>0$ is characterized by the interlacing condition
\[
0<\tfrac\pi6 < \th_1 < \tfrac{3\pi}6 < \th_2  < \tfrac{5\pi}6 < \pi.
\]
This follows from the
explicit formulae
\begin{align*}
 p(\om^{\frac12})&=-4(\cos \th_1 - \cos \tfrac{\pi}6)(\cos \th_2 - \cos \tfrac{\pi}6)
 \\
 p(\om^{\frac32})&=8\cos \th_1\cos \th_2
 \\
 p(\om^{\frac52})&=-4(\cos \th_1 - \cos \tfrac{5\pi}6)(\cos \th_2 - \cos \tfrac{5\pi}6)
\end{align*}
as the conditions
$p(\om^{\frac12})<0, p(\om^{\frac32})<0, p(\om^{\frac52})<0$
mean that $\th_1,\th_2$ lie on the same side of $\tfrac{\pi}6$,
opposite sides of $\tfrac{3\pi}6$, and the same side of $\tfrac{5\pi}6$.

\section{Appendix C: Asymptotics of radial solutions}

The purpose of this section is to prove Theorems \ref{theorem1} and \ref{theorem2} below.  These results were used in sections \ref{asymptotic} and \ref{vanishing} in order to relate the solutions obtained from the Riemann-Hilbert approach with the solutions obtained earlier in \cite{GuLiXX},\cite{GuItLiXX}.   While this method was primarily a matter of convenience,  the results themselves are of independent interest, as they show that Theorem A of \cite{GuItLiXX} accounts for all radial solutions which are smooth on $\C^\ast$. 

For $a,b>0$, let us consider the equations
\begin{align}
(u_1)_{z\zbar} &=e^{au_1} - e^{u_2-u_1}\label{equation1}\\
(u_2)_{z\zbar} &=e^{u_2-u_1} - e^{-bu_2} \label{equation2}
\end{align}
(this is system (2.1) of section 2 of \cite{GuItLiXX}; the case 4a considered in this article is the case $a=b=2$).  

In this section we sometimes write $x \ (=\text{Re} z + \ii \text{Im} z)\in\R^2$ instead of $z\in\C$, and use the notation $\De f=4f_{z\zbar}$.  If $f$ depends only on $r=\vert x-p\vert$ for some fixed $p$, we have $\De f=f^\prr + \tfrac1r f^\pr$, where prime denotes derivative with respect to $r$.  We also use the notation $B(p,a)=\{ x\in\R^2 \st \vert x-p\vert < a \}$.

\begin{lemma}\label{lemma1} 
Assume $u_1,u_2$ are smooth solutions of (\ref{equation1}), (\ref{equation2}) on $\C^\ast$.  Then there exist $\be_i>0$
such that 
\[
\text{ $\vert u_i(x)\vert \le -\be_i \log \vert x\vert $ ($i=1,2$) }
\]
for small $\vert x\vert$.  
\end{lemma}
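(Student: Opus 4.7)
The plan is to exploit the radial symmetry of the solutions (implicit from the title of this Appendix) and reduce the PDE system to a coupled ODE by setting $t=\log|x|$ and $v_i(t) = u_i(e^t)$. In these variables (\ref{equation1})--(\ref{equation2}) become
\begin{equation*}
v_i''(t) = 4e^{2t}F_i(v_1,v_2),\qquad F_1 = e^{av_1} - e^{v_2-v_1},\quad F_2 = e^{v_2-v_1} - e^{-bv_2},
\end{equation*}
and the claim $|u_i(x)| \le -\beta_i\log|x|$ for small $|x|$ translates to $|v_i(t)| \le \beta_i|t|$ for $t$ sufficiently negative. This linear bound is what we need to establish, and it reduces, by integration of $v_i'$ over $[t,t_0]$, to proving that $v_i'(t)$ remains uniformly bounded on a half-line $(-\infty, t_0]$.

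The central tool will be the Hamiltonian structure of the system: writing $F_i = \partial V/\partial v_i$ for the potential $V(v_1,v_2) = \tfrac{1}{a}e^{av_1} + e^{v_2-v_1} + \tfrac{1}{b}e^{-bv_2}$, I would introduce the mechanical energy
\begin{equation*}
E(t) = \tfrac{1}{2}\bigl((v_1'(t))^2 + (v_2'(t))^2\bigr) - 4e^{2t}V(v_1(t),v_2(t)),
\end{equation*}
which satisfies $E'(t) = -8e^{2t}V < 0$ by direct computation. Thus $E$ is strictly decreasing, the limit $E(-\infty) := \lim_{t\to -\infty}E(t) \in (-\infty, +\infty]$ exists, and one obtains the integrability $\int_{-\infty}^{t_0} e^{2s}V(v_1(s),v_2(s))\,ds = \tfrac18(E(-\infty)-E(t_0))$ whenever $E(-\infty)$ is finite.

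The main obstacle, and what I expect to be the crux of the proof, is to rule out $E(-\infty)=+\infty$, equivalently to show that $(v_1')^2 + (v_2')^2$ remains bounded on $(-\infty, t_0]$. I would argue by contradiction, exploiting the sign structure of the $F_i$: if $v_1(t_n) \to +\infty$ along a sequence $t_n \to -\infty$ at a super-linear rate in $|t_n|$, then in a neighbourhood of each $t_n$ the term $e^{av_1}$ dominates $e^{v_2-v_1}$ in $F_1$ (the subcase of comparably large $v_2$ being handled by the companion equation for $v_2$), so $v_1''>0$; combined with $v_1'(t_0)$ finite, convexity forces $v_1'(t_n) \to -\infty$, making $v_1(t)$ decrease at linear or faster rate and contradicting $v_1(t_n) \to +\infty$ super-linearly. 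Symmetric arguments, in which the coupled term $e^{v_2-v_1}$ plays the key role, rule out super-linear behaviour for $v_1\to -\infty$ and for either sign of $v_2$. Once the uniform bound $|v_i'(t)| \le M$ on $(-\infty, t_0]$ is in hand, integration yields $|v_i(t) - v_i(t_0)| \le M|t-t_0|$, whence $|u_i(x)| \le -\beta_i\log|x|$ for a suitable $\beta_i>0$ and $|x|$ small enough.
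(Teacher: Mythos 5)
Your reduction to the ODE system $v_i''=4e^{2t}\,\partial V/\partial v_i$ and the monotone energy $E(t)=\tfrac12\bigl((v_1')^2+(v_2')^2\bigr)-4e^{2t}V$ are correct as far as they go, and the observation that finiteness of $E(-\infty)$ would give $e^{2s}V\in L^1(-\infty,t_0]$ (hence the integrability hypothesis of Lemma \ref{lemma2}) is a genuinely nice alternative route. But the proof has a real gap exactly at its crux, the step you yourself identify as the main obstacle. First, $E(-\infty)=+\infty$ is not equivalent to unboundedness of $(v_1')^2+(v_2')^2$: since $E\le\tfrac12|v'|^2$ one direction holds, but $E(-\infty)<\infty$ only bounds $\tfrac12|v'|^2-4e^{2t}V$, and $e^{2t}V$ contains the term $e^{2t+av_1}$, which is precisely the quantity that blows up under the superlinear growth you are trying to exclude. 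Second, the contradiction argument fails: a function with $v_1''>0$ on $(-\infty,t_0]$ and $v_1'(t)\to-\infty$ as $t\to-\infty$ (e.g.\ $v_1(t)=t^2$) is convex, has $v_1'(t_0)$ finite, and grows \emph{superlinearly} as $t\to-\infty$ --- so ``convexity forces $v_1'(t_n)\to-\infty$, contradicting superlinear growth'' derives no contradiction at all; the two conclusions are consistent. Moreover the claim $v_1''>0$ near $t_n$ requires $e^{av_1}>e^{v_2-v_1}$ there, i.e.\ control of $v_2$, which is deferred to ``the companion equation'' without an actual argument; the indefinite sign of $F_1$ and the coupling are the whole difficulty, and they are not resolved.

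For comparison, the paper's proof (following Cheng--Lin \cite{ChLi97}) does not use radial symmetry at all --- note that the lemma is stated for arbitrary smooth solutions on $\C^\ast$, and radiality is only imposed afterwards --- so your reduction to an ODE already proves a weaker statement than the one asserted. The paper instead builds explicit barriers $w_i(x)=\be_i\log\tfrac1{r(r_0-r)}$ on small balls $B(y_0,r_0)$ avoiding the origin, chooses $a\be_1>2$, $\be_2-\be_1=2$ so that $(w_1,w_2)$ is a supersolution pair, and runs an alternating maximum-principle argument on the pair $(u_1-w_1,u_2-w_2)$: a positive interior maximum of $u_1-w_1$ forces $(u_2-w_2)>(u_1-w_1)$ at that point via equation (\ref{equation1}), while a positive interior maximum of $u_2-w_2$ forces the reverse inequality via equation (\ref{equation2}), a contradiction. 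The lower bound comes from the symmetry $(u_1,u_2)\mapsto(-u_2,-u_1)$. It is this two-component comparison structure that handles the coupling; some substitute for it would be needed to complete your ODE approach.
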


\begin{proof}  The proof is based on the method of \cite{ChLi97}.   

Let us consider any $x_0$ with
$0<\vert x_0\vert < \tfrac14$.   Let $r_0=  \tfrac12\vert x_0\vert$.    Consider any $y_0$ with 
$\vert x_0-y_0\vert = \tfrac14 r_0$.  In the following argument we fix $y_0$.  Writing $r=\vert x - y_0\vert$,  we introduce a function
\[
w(r)=\log \tfrac1{r(r_0-r)},\quad 0<r<r_0.
\]
This satisfies
$w(r)\to+\infty$ as $r\to 0$ or $r\to r_0$, and
$
w^\prr +\tfrac1r w^\pr = \tfrac{r_0}{r(r_0-r)^2}.
$

For $i=1,2$ let $w_i(x)=\be_i w(r)$, where $\be_1,\be_2$ are positive constants to be chosen shortly.
For $x\in B(y_0,r_0)$, we have
\begin{equation}\label{lhs}
\De w_i = \tfrac{\be_i r_0}{r(r_0-r)^2}\le \tfrac{1}{r^2(r_0-r)^2}
\end{equation}
if $r_0$ is small enough.

On the other hand
\begin{equation}\label{rhs1}
e^{aw_1} - e^{w_2-w_1}=
\left( \tfrac1{(r_0-r)r} \right)^{a\be_1} 
-
\left( \tfrac1{(r_0-r)r} \right)^{\be_2-\be_1}
\end{equation}
\begin{equation}\label{rhs2}
e^{w_2-w_1} - e^{-bw_2}=
\left( \tfrac1{(r_0-r)r} \right)^{\be_2-\be_1} 
-
\left( \tfrac1{(r_0-r)r} \right)^{-b\be_2}.
\end{equation}
Therefore, on $B(y_0,r_0)$, we obtain
\begin{align}
(w_1)_{z\zbar}&\le e^{aw_1} - e^{w_2-w_1}\label{sub1}
\\
(w_2)_{z\zbar}&\le e^{w_2-w_1} - e^{-bw_2}\label{sub2}
\end{align}
if we choose $a\be_1>2$ and $\be_2-\be_1=2$ and if $r_0$
is sufficiently small.  
We shall prove next that 
\begin{equation}\label{wsubsolution}
\text{$u_i\le w_i$ \ on $B(y_0,r_0)$  \ \  ($i=1,2$).}
\end{equation}

If $u_1\le w_1$ does not hold, then there exists some $z$ such that
\[
(u_1-w_1)(z)=\max_{\vert x-y_0\vert \le r_0}  (u_1-w_1)(x) > 0.
\]
We have $0<\vert z-y_0\vert < r_0$ 
because $w_1(x)\to+\infty$ as $x\to y_0$.
Hence, using the maximum principle, and inequality (\ref{sub1}), we obtain
\[
0\ge (u_1-w_1)_{z\zbar}(z)
\ge e^{au_1(z)} -  e^{aw_1(z)} -
( e^{(u_2-u_1)(z)} - e^{(w_2-w_1)(z)} ).
\]
Since $e^{au_1(z)} - e^{aw_1(z)}>0$,  we obtain $ e^{(u_2-u_1)(z)} - e^{(w_2-w_1)(z)} >0$,
hence
\begin{equation}\label{2greater than1}
(u_2-w_2)(z) > (u_1-w_1)(z) >0.
\end{equation}
Next, as $(u_2-w_2)(z) >0$,  
there exists some $\tilde z$ such that
\[
(u_2-w_2)(\tilde z)=\max_{\vert x-y_0\vert \le r_0}  (u_2-w_2)(x) > 0.
\]
The maximum principle, and inequality (\ref{sub2}), give
\[
0\ge (u_2-w_2)_{z\zbar}(\tilde z)
\ge e^{(u_2-u_1)(\tilde z)} -  e^{(w_2-w_1)(\tilde z)} -
( e^{-bu_2(\tilde z)} - e^{(-bw_2(\tilde z)} ).
\]
Since $e^{-bu_2(\tilde z)} - e^{(-bw_2(\tilde z)}<0$,  we obtain $ e^{(u_2-u_1)(\tilde z)} - e^{(w_2-w_1)(\tilde z)} <0$, i.e.\ 
$(u_2-w_2)(\tilde z) < (u_1-w_1)(\tilde z)$.
Thus
\begin{align*}
(u_2-w_2)(z)&\le (u_2-w_2)(\tilde z) \text{ by definition of $\tilde z$}
\\
&< (u_1-w_1)(\tilde z) 
\\
&\le (u_1-w_1)(z) \text{ by definition of $z$.}
\end{align*}
But this gives
\begin{equation}\label{1greater than2}
(u_2-w_2)(z) < (u_1-w_1)(z)
\end{equation}
which contradicts (\ref{2greater than1}).
We conclude that $u_1\le w_1$, as required.
A similar argument gives $u_2\le w_2$.   This establishes (\ref{wsubsolution}).

In particular we have $u_1(x_0) \le w_1(x_0) = 2\be_1 \log \tfrac1{\vert x_0\vert} + c_1$ and
$u_2(x_0) \le w_2(x_0) = 2\be_2 \log \tfrac1{\vert x_0\vert} + c_2$ for some constants $c_1,c_2$.

Finally, replacing $u_1,u_2$ by $-u_2,-u_1$, the same argument gives bounds for $-u_1,-u_2$.  This completes the proof of the lemma.
\end{proof}

\begin{remark}\label{largex}
If  $\vert x_0\vert >> 1$, we may take $r_0=1$ in the above argument,  which then gives
$\vert u_i(x)\vert \le c_i$ for large $x$, for some constants $c_1,c_2$.  
\end{remark}

From now on, we assume that $u_i$ is a radial solution, i.e.\ $u_i=u_i(r)$ where $r=\vert x\vert$.  
Let $B_0=\{ x\in \R^2 \st 0<\vert x\vert \le 1 \}$.

\begin{lemma}\label{lemma2}    
Assume $u_1,u_2$ are smooth radial solutions of (\ref{equation1}), (\ref{equation2}) on $\C^\ast$. 
If one of $e^{au_1},e^{u_2-u_1},e^{-bu_2}$ is in $L^1(B_0)$, then
all three are, and the limits
\[
\lim_{r\to 0} \frac{u_i(r)}{\log r}\quad (i=1,2)
\]
exist.
\end{lemma}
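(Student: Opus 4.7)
The plan is to reduce to the radial ODE form. For a radial function $u$, $\Delta u = (ru')'/r$, so (\ref{equation1}),(\ref{equation2}) become
\[
(ru_1')'(r) = 4r\bigl(E_1(r)-E_2(r)\bigr), \qquad (ru_2')'(r) = 4r\bigl(E_2(r)-E_3(r)\bigr),
\]
where $E_1=e^{au_1}$, $E_2=e^{u_2-u_1}$, $E_3=e^{-bu_2}$. Set $I_j(r)=\int_r^1 s\,E_j(s)\,ds$. Integrating from $r$ to $1$ yields the identities
\[
ru_1'(r) = u_1'(1) - 4\bigl(I_1(r)-I_2(r)\bigr),\qquad ru_2'(r) = u_2'(1) - 4\bigl(I_2(r)-I_3(r)\bigr),
\]
and each $I_j(r)$ is monotone non-decreasing as $r\to 0^+$, hence admits a finite limit precisely when $E_j\in L^1(B_0)$.

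First I would show that finiteness of one $I_j(0^+)$ forces finiteness of all three. Assume for instance $E_1\in L^1(B_0)$ and, for contradiction, $I_2(r)\to+\infty$. The first identity then gives $ru_1'(r)\to+\infty$, so for any $M>0$, $u_1'(r)\ge M/r$ on a neighbourhood of $0$. Integrating from $r$ to some fixed $r^\ast$ yields $u_1(r)\le u_1(r^\ast)-M\log(r^\ast/r)$, contradicting the lower bound $u_1(r)\ge \beta_1\log r$ from Lemma \ref{lemma1} once $M>\beta_1$. Hence $I_2(0^+)<\infty$, and the same argument applied to the $u_2$-equation (now with $I_2$ bounded) forces $I_3(0^+)<\infty$. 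The two remaining starting hypotheses ($E_2\in L^1$ or $E_3\in L^1$) are handled by analogous case analyses: in each situation the hypothetical divergence of a partial integral forces $ru_i'(r)$ to tend to $\pm\infty$, and one checks that the resulting logarithmic growth of $u_i$ violates one side of the two-sided bound $|u_i|\le -\beta_i\log r$ from Lemma \ref{lemma1}.

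Once all three $E_j$ lie in $L^1(B_0)$, the finite limits $L_i:=\lim_{r\to 0}ru_i'(r)$ exist. Writing $ru_i'(r)=L_i+\epsilon_i(r)$ with $\epsilon_i(r)\to 0$ and integrating $u_i'(s)=L_i/s+\epsilon_i(s)/s$ from $r$ to $1$ gives
\[
u_i(r) = u_i(1) + L_i\log r - \int_r^1 \frac{\epsilon_i(s)}{s}\,ds.
\]
To conclude that $u_i(r)/\log r\to L_i$, I would estimate the remainder: given $\delta>0$, choose $s_0$ with $|\epsilon_i(s)|<\delta$ on $(0,s_0)$, so $\bigl|\int_r^1\epsilon_i(s)/s\,ds\bigr|\le C(s_0)+\delta|\log r|$; dividing by $|\log r|$ produces a quantity with $\limsup_{r\to 0}$ at most $\delta$, and arbitrariness of $\delta$ yields the limit zero.

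The main obstacle is the sign bookkeeping in the first step: one must verify, in each of the three starting cases, that the hypothetical divergence of an adjacent partial integral forces $ru_i'(r)\to+\infty$ or $-\infty$ in exactly the direction that contradicts the correct half of Lemma \ref{lemma1}'s two-sided bound. Once this is in place, both steps collapse to elementary real-analytic estimates, and the existence of the limits $u_i(r)/\log r$ follows immediately from the convergence of $ru_i'(r)$ already used to establish the $L^1$ equivalence.
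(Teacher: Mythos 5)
Your proposal is correct and follows essentially the same route as the paper: reduce to the radial ODE $(ru_i')'=4r(E_j-E_{j+1})$, integrate once, use the two-sided logarithmic bound of Lemma \ref{lemma1} to control $ru_i'(r)$ and hence force integrability of the adjacent exponential, then integrate again to obtain $\lim_{r\to0}u_i(r)/\log r$. The only cosmetic difference is that the paper extracts a sequence $r_m\to0$ along which $r_mu_1'(r_m)$ stays bounded and invokes monotonicity of the partial integrals, whereas you run the equivalent contradiction argument assuming a partial integral diverges; both hinge on exactly the same two facts.
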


\begin{proof}  Assume that  $e^{au_1}\in L^1(B_0)$.   By Lemma \ref{lemma1}, there exists a sequence
$r_m\to 0$ in the interval $(0,1)$ such that $r_m u_1^\pr(r_m)$ remains bounded.  
Integration of equation (\ref{equation1}) gives
\[
 u_1^\pr(1)-r_m u_1^\pr(r_m) 
 =  4\int_{r_m}^1 e^{au_1} - e^{u_2-u_1}  \ rdr.
\]
It follows
that  $\lim_{m\to+\infty} \int_{r_m}^1 e^{u_2-u_1}\ rdr$ exists, hence 
$\lim_{r\to0} \int_{r}^1 e^{u_2-u_1} \ rdr$ exists,  i.e.\ $e^{u_2-u_1}\in L^1(B_0)$.  Integrating now from $r$ to $1$,   we see that 
$\lim_{r\to0} {ru_1(r)}$ exists, hence
$\lim_{r\to0} {u_1(r)}/{\log r}$ exists, as required.  From equation (\ref{equation2}) we deduce
in a similar way that
$e^{-bu_2}\in L^1(B_0)$ and 
$\lim_{r\to0} {u_2(r)}/{\log r}$ exists.  This completes the proof when $e^{au_1}\in L^1(B_0)$.
If $e^{u_2-u_1}\in L^1(B_0)$ or $e^{-bu_2}\in L^1(B_0)$, the proof is similar.
\end{proof}

\begin{theorem}\label{theorem1}
Assume $u_1,u_2$ are smooth radial solutions of (\ref{equation1}), (\ref{equation2}) on $\C^\ast$. 
Then the limits
\[
\lim_{r\to 0} \frac{u_i(r)}{\log r}\quad (i=1,2)
\]
exist.
\end{theorem}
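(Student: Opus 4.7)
My plan is to reduce Theorem \ref{theorem1} to Lemma \ref{lemma2} by proving that at least one (hence all) of $e^{au_1}$, $e^{u_2-u_1}$, $e^{-bu_2}$ lies in $L^1(B_0)$. The mechanism will be a monotone Pohozaev-type energy for the radial ODE. Setting $V_i(r):=ru_i'(r)$ and
\[
H(r):=\tfrac{4}{a}e^{au_1(r)}+4e^{u_2(r)-u_1(r)}+\tfrac{4}{b}e^{-bu_2(r)},
\]
I note that the radial form of (\ref{equation1})--(\ref{equation2}) is exactly $\De u_i=\partial_{u_i}H$. Multiplying each equation by $ru_i'$ and summing gives, after routine algebra,
\[
\tilde E(r)\;:=\;\tfrac12\bigl(V_1(r)^2+V_2(r)^2\bigr)-r^2H(r),\qquad \tilde E'(r)=-2rH(r)\le 0.
\]
Thus $\tilde E$ is non-increasing on $(0,\infty)$, the limit $\tilde E(0):=\lim_{r\to 0^+}\tilde E(r)\in\R\cup\{+\infty\}$ exists, and $2\int_0^1 sH(s)\,ds=\tilde E(0)-\tilde E(1)$. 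Once I establish $\tilde E(0)<+\infty$, each of $\int_0^1 se^{au_1}\,ds$, $\int_0^1 se^{u_2-u_1}\,ds$, $\int_0^1 se^{-bu_2}\,ds$ is finite, so each exponential lies in $L^1(B_0)$, and Lemma \ref{lemma2} finishes the argument.

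So everything reduces to proving $\tilde E(0)<+\infty$, which I would argue by contradiction. If $\tilde E(r)\to+\infty$ as $r\to 0$, then since $r^2H\ge 0$ I get $|V(r)|^2=V_1(r)^2+V_2(r)^2\ge 2\tilde E(r)\to+\infty$. In the simplest scenario, where one of $V_1,V_2$ (say $V_1$) has fixed sign on some punctured neighbourhood of $0$ with $|V_1(r)|\to+\infty$, integrating $u_1'(r)=V_1(r)/r$ from $r$ to a small $r_M$ gives $|u_1(r)|\ge M|\log r|+O(1)$ for any prescribed $M>0$, which directly contradicts the bound $|u_1(r)|\le C|\log r|$ of Lemma \ref{lemma1} once $M>C$.

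The main obstacle I anticipate is ruling out the oscillatory alternative, in which neither $|V_1|$ nor $|V_2|$ individually tends to $+\infty$ even though $|V|\to+\infty$. To handle this I would track the winding of the continuous planar curve $r\mapsto(V_1(r),V_2(r))$, which under the hypothesis remains outside a disk of radius tending to infinity, using the explicit formulas
\[
V_1'(r)=4r(e^{au_1}-e^{u_2-u_1}),\qquad V_2'(r)=4r(e^{u_2-u_1}-e^{-bu_2})
\]
together with the identity $(|V|^2)'=2r^2H'$ that follows from $\tilde E'=-2rH$. The goal is to bound the angular velocity $\theta'=(V_1V_2'-V_2V_1')/|V|^2$ and rule out infinite winding about the origin while $u_1,u_2$ remain confined to $|u_i|\le C|\log r|$; once infinite winding is excluded, the signs of $V_1,V_2$ eventually stabilize and the easy case above closes the contradiction.
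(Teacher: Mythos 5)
Your reduction of the theorem to the finiteness of the Pohozaev energy is correct and clean: with $V_i=ru_i'$ one indeed has $V_i'=r\,\partial_{u_i}H$, hence $\tilde E'=-2rH\le 0$, and $\tilde E(0)<+\infty$ is equivalent to $rH\in L^1(0,1)$, which puts all three exponentials in $L^1(B_0)$ and lets Lemma \ref{lemma2} finish. The problem is that you have not proved $\tilde E(0)<+\infty$. Your contradiction argument only closes in the sub-case where the signs of $V_1,V_2$ eventually stabilize (then $|V_1\pm V_2|\ge |V|\to\infty$ with fixed sign contradicts Lemma \ref{lemma1} applied to $u_1\pm u_2$); the oscillatory alternative is exactly where the difficulty of the theorem lives, and you leave it as a ``goal''. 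Moreover, the tool you propose for it is unlikely to work as stated: to bound the total winding you need $\int|\theta'|\,dr<\infty$ with $|\theta'|\le(|V_1'|+|V_2'|)/|V|\le CrH/|V|$, and the only a priori control you have on $H$ is Lemma \ref{lemma1}, which gives $e^{au_1}\lesssim r^{-a\be_1}$ with $a\be_1>2$ --- far too weak for integrability; even using $rH=-\tilde E'/2$ and $|V|\ge\sqrt{2\tilde E}$ the resulting bound is $\sqrt{2\tilde E(r)}-\sqrt{2\tilde E(1)}$, which diverges under your contradiction hypothesis. So the winding argument is circular at present, and the proof is incomplete at its essential step.

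For comparison, the paper avoids the energy altogether and splits directly on whether one of the $u_i$ has infinitely many critical points $r_m\to 0$. In the oscillatory case it exploits the structure of the right-hand sides: the maximum principle at a local max/min of $u_1$ forces $au_1\lessgtr u_2-u_1$ there, and the maximum principle applied to $u_2-u_1$ on the intervals between consecutive critical points then yields uniform two-sided \emph{constant} bounds on $u_1$ (Assertions 1 and 2), so $e^{au_1}$ is bounded, hence in $L^1(B_0)$. In the monotone case one of the three exponentials is trivially bounded near $0$. Either way Lemma \ref{lemma2} applies. If you want to salvage your route, you would need a replacement for the winding estimate --- most plausibly something like the paper's maximum-principle bounds at critical points, at which point the energy machinery becomes superfluous.
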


\begin{proof} $ $

\no{\em Case I: Either $u_1(r)$ or $u_2(r)$ has infinitely many local maxima or minima $r_m$ ($m\in\N$) with $\lim_{m\to+\infty} r_m=0$.}

If $u_1$ satisfies this condition, we shall prove that $e^{au_1}$ is in $L^1(B_0)$, then apply Lemma \ref{lemma2} to obtain Theorem \ref{theorem1} in this case. 
Without loss of generality we may assume that
\[
0< \cdots < r_{m+1} <  r_m < \cdots < r_1\le 1
\]
where the $r_{2m-1}$ are local minima of $u_1$ and the $r_{2m}$ are local maxima of $u_1$.   By the maximum principle we have
\begin{align}
au_1(r_{2m-1}) &\ge (u_2-u_1)(r_{2m-1}), \text{ and} \label{min1}\\
au_1(r_{2m}) &\le (u_2-u_1)(r_{2m})\label{max1}.
\end{align}
For the intervals
\[
I_m=[r_{2m},r_{2m-2}],\quad I^\ast_m=[r_{2m+1},r_{2m-1}],
\]
there exist $\s\in I_m$, $\s^\ast\in I^\ast_m$, such that
\[
(u_1-u_2)(\s)=\max_{r\in I_m} (u_1-u_2)(r), \quad
(u_2-u_1)(\s^\ast)=\max_{r\in I_m^\ast} (u_2-u_1)(r).
\]
Note that by (\ref{max1}) 
\[
(u_1-u_2)(r_{2m}) \le -au_1(r_{2m}), \text{ and } 
(u_1-u_2)(r_{2m-2}) \le -au_1(r_{2m-2}),
\]
and by (\ref{min1})
\[
(u_1-u_2)(r_{2m-1}) \ge -au_1(r_{2m-1}) > \max\{-au_1(r_{2m-2}), -au_1(r_{2m})\}.
\]
It follows that $\s$ is an interior point of $I_m$.  A similar argument shows that  $\s^\ast$ is an interior point of $I^\ast_m$.

By the maximum principle for 
$(u_2-u_1)_{z\zbar}=2e^{u_2-u_1}- e^{-bu_2}-e^{au_1}$
(the difference of equations
(\ref{equation1}), (\ref{equation2})),  we have
\begin{align}
2e^{(u_2-u_1)(\s)} &\ge e^{au_1(\s)}+ e^{-bu_2(\s)}, \text{ and}  \label{max12}\\
2e^{(u_2-u_1)(\s^\ast)} &\le e^{au_1(\s^\ast)}+ e^{-bu_2(\s^\ast)}  \label{max21}.
\end{align}

First we shall establish a lower bound for $u_1$: 

\no{\em Assertion 1.\  For all $r\in(0,c]$, with $c$ sufficiently small, we have 
$u_1(r)\ge -\tfrac1a \max\{ \tfrac1a\log2,  \tfrac1b\log2 \}$. } 

\no We may assume
\begin{equation}\label{u1oddpos}
u_1(r_{2m-1})<0
\end{equation}
for a sequence of $m$ such that $m\to+\infty$ (otherwise $u_1(r)\ge 0$ for $r$ sufficiently small, and the lower bound holds already).  Hence, for such $m$,
\begin{equation}\label{u12oddpos}
(u_1-u_2)(\s) \ge (u_1-u_2)(r_{2m-1}) \ge -au_1(r_{2m-1}) > 0,
\end{equation}
by definition of $\s$ and by using (\ref{min1}), (\ref{u1oddpos}).

We claim that $(u_1-u_2)(\s)\le \max\{ \tfrac1a\log2,  \tfrac1b\log2 \}$.  
First we note that
\begin{equation}\label{frommax12}
2>2e^{(u_2-u_1)(\s)} \ge e^{au_1(\s)}+ e^{-bu_2(\s)}\ge  e^{-bu_2(\s)},
\end{equation}
by (\ref{u12oddpos}) and  (\ref{max12}).   

If $u_1(\s)\le 0$,  then
\begin{equation}\label{firstbound12}
(u_1-u_2)(\s)\le -u_2(\s)\le \tfrac1b \log 2
\end{equation}
by (\ref{frommax12}).  

If $u_1(\s)> 0$,  we must have
\begin{equation}\label{u2pos}
u_2(\s)>0.
\end{equation}
Namely, if $u_2(\s)\le0$, (\ref{frommax12}) gives 
$2>2e^{(u_2-u_1)(\s)} \ge e^{au_1(\s)}+ e^{-bu_2(\s)}
\ge 1+ 1=  2$,
which is a contradiction.  Thus (\ref{u2pos}) holds, and hence we have
\begin{equation}\label{u12lessthanu1}
(u_1-u_2)(\s)<u_1(\s).
\end{equation}
On the other hand,   (\ref{max12}) and  (\ref{u12oddpos}) give the following
upper bound for $u_1(\s)$:
\[
e^{au_1(\s)}\le 2e^{(u_2-u_1)(\s)} -  e^{-bu_2(\s)} \le 2e^{(u_2-u_1)(\s)} \le 2.
\]
We obtain 
\begin{equation}\label{secondbound12}
(u_1-u_2)(\s)\le u_1(\s) \le \tfrac1a\log 2.
\end{equation}
Combining (\ref{firstbound12}) and  (\ref{secondbound12}) we obtain 
$(u_1-u_2)(\s)\le \max\{ \tfrac1a\log2,  \tfrac1b\log2 \}$, as claimed.

Now we can complete the proof of Assertion 1.  We have
\begin{align*}
-au_1(r_{2m-1})&\le (u_1-u_2)(r_{2m-1})  \text{ by (\ref{min1}) }
\\
&\le (u_1-u_2)(\s)  \text{ by definition of $\s$ }
\\
&\le \max\{ \tfrac1a\log2,  \tfrac1b\log2 \}  \text{ by the claim above, }
\end{align*}
i.e.\ $u_1(r_{2m-1}) \ge -\tfrac1a \max\{ \tfrac1a\log2,  \tfrac1b\log2 \}$.  
Assertion 1 follows immediately from this.

Next we use Assertion 1 to establish an upper bound for $u_1$.   By Lemma \ref{lemma2}, this 
will complete the proof of the theorem in Case I.  

\no{\em Assertion 2.\  
For all $r\in(0,c]$, with $c$ sufficiently small, we have 
$u_1(r)\le \tfrac1{a^2} \max\{ \tfrac1a\log2,  \tfrac1b\log2 \}$. } 

\no We may assume
\begin{equation}\label{u1evenpos}
u_1(r_{2m})>0
\end{equation}
for a sequence of $m$ such that $m\to+\infty$ (otherwise $u_1(r)\le 0$ for $r$ sufficiently small, and the upper bound holds already).  Hence, for such $m$,
\begin{equation}\label{u21evenpos}
(u_2-u_1)(\s^\ast) \ge (u_2-u_1)(r_{2m}) \ge au_1(r_{2m}) > 0,
\end{equation}
by definition of $\s^\ast$ and by using (\ref{max1}), (\ref{u1evenpos}).

We shall show that $u_2(\s^\ast)<0$.  First note that
\begin{align*}
au_1(\s^\ast) &\le au_1(r_{2m})  \text{ by definition of $r_{2m}$ }
\\
&\le (u_2-u_1)(r_{2m})  \text{ by (\ref{max1}) }
\\
&\le (u_2-u_1)(\s^\ast) \text{ by definition of $\s^\ast$. }
\end{align*}
Then we have
\begin{align*}
1&< e^{(u_2-u_1)(\s^\ast)}  \text{ by (\ref{u21evenpos}) }
\\
&\le 2e^{(u_2-u_1)(\s^\ast)} - e^{au_1(\s^\ast)}  \text{ by the previous paragraph }
\\
&\le e^{-bu_2(\s^\ast)}  \text{ by (\ref{max21}). }
\end{align*}
This shows that $u_2(\s^\ast)<0$.

Now we can complete the proof of Assertion 2.  We have
\begin{align*}
au_1(r_{2m})&\le (u_2-u_1)(r_{2m})  \text{ by (\ref{max1}) }
\\
&\le (u_2-u_1)(\s^\ast)  \text{ by definition of $\s^\ast$ }
\\
&< -u_1(\s^\ast) \text{ as $u_2(\s^\ast)<0$ }
\\
&\le \tfrac1a \max\{ \tfrac1a\log2,  \tfrac1b\log2 \}  \text{ by Assertion 1, }
\end{align*}
i.e.\ $u_1(r_{2m}) \le \tfrac1{a^2} \max\{ \tfrac1a\log2,  \tfrac1b\log2 \}$.  
Assertion 2, and hence the proof of the theorem in Case I,  follows immediately from this.

\no{\em Case II: Both $u_1(r)$ and $u_2(r)$ are monotone as $r\to 0$.}

If $u_1^\pr(r)>0$ as $r\to 0$ then $e^{au_1(r)}$ is bounded as $r\to 0$, 
so $e^{au_1} \in L^1(B_0)$.  Similarly, if $u_2^\pr(r)<0$ as $r\to 0$ then $e^{-bu_2} \in L^1(B_0)$.
It remains to consider the case where $u_1^\pr(r)<0$, $u_2^\pr(r)>0$.  But then 
$(u_2-u_1)^\pr(r)>0$ as $r\to 0$, so $e^{u_2-u_1} \in L^1(B_0)$.   Thus, Lemma \ref{lemma2}
completes the proof.  
\end{proof}

\begin{theorem}\label{theorem2}
Assume $u_1,u_2$ are smooth radial solutions of (\ref{equation1}), (\ref{equation2}) on $\C^\ast$. 
Then
$
\lim_{r\to +\infty} u_i(r) = 0 \quad (i=1,2).
$
\end{theorem}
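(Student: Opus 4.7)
The plan is to combine the boundedness of $u_1,u_2$ at infinity (from Remark \ref{largex}, which applies since one may take $r_0=1$ in the barrier argument of Lemma \ref{lemma1} when $|x_0|$ is large) with a maximum-principle argument at local extrema, in the same spirit as the proof of Theorem \ref{theorem1}. Writing the radial equations as $u_i'' + \tfrac{1}{r}u_i' = 4 F_i(u_1,u_2)$, at any local maximum $r_0$ of $u_1$ one has $u_1'(r_0)=0$ and $u_1''(r_0)\le 0$, hence $F_1(u_1(r_0),u_2(r_0))\le 0$, which simplifies to $u_2(r_0) \ge (a+1)u_1(r_0)$. Similarly $u_1\ge(1+b)u_2$ at a local maximum of $u_2$, with the reverse inequalities at local minima. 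The crucial algebraic fact, driving the whole argument, is that $(a+1)(1+b)>1$.

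Set $M_i := \limsup_{r\to\infty} u_i(r)$ and $m_i := \liminf_{r\to\infty} u_i(r)$; these are finite by Remark \ref{largex}. I would show $M_1 \le 0$ by contradiction, assuming $M_1 > 0$; the analogous argument at local minima gives $m_1 \ge 0$, hence $u_1\to 0$, and the symmetric analysis (exchanging $u_1,u_2$ and $a,b$) gives $u_2\to 0$. Using the elementary observation that if a bounded continuous function on $[1,\infty)$ fails to converge then its $\limsup$ (resp.\ $\liminf$) is attained along a sequence of local maxima (resp.\ minima) tending to $\infty$, one splits into three cases.

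In case (i), neither $u_1$ nor $u_2$ converges. Choose local maxima $r_n,\tilde r_n$ of $u_1,u_2$ with $u_1(r_n)\to M_1$ and $u_2(\tilde r_n)\to M_2$, and pass the two inequalities above to the limit: $M_2 \ge (a+1)M_1$ and $M_1 \ge (1+b)M_2$, hence $M_1 \ge (a+1)(1+b)M_1$, contradicting $M_1>0$. In case (ii), both $u_i\to L_i$. If some $F_i(L_1,L_2)\ne0$, integrating $(ru_i')' = 4rF_i + o(r)$ yields $u_i'(r)\sim 2F_i(L_1,L_2)\,r$, forcing $|u_i|\to\infty$, a contradiction; so $F_1(L_1,L_2)=F_2(L_1,L_2)=0$, i.e.\ $L_2=(a+1)L_1$ and $L_1=(1+b)L_2$, whose only solution is $L_1=L_2=0$ because $(a+1)(1+b)\ne 1$. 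In case (iii), exactly one of $u_1,u_2$ converges; say $u_1\to L_1$. Applying the max-principle inequalities at local maxima and minima of $u_2$ gives $(1+b)M_2 \le L_1 \le (1+b)m_2$, forcing $M_2=m_2$, so $u_2$ converges after all and we reduce to case (ii); the symmetric subcase is identical.

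The main obstacle is the elementary real-analysis step that the $\limsup$ and $\liminf$ of a non-convergent bounded continuous function on $[1,\infty)$ are attained along sequences of local extrema. This is proved by noting that for each $c\in(m_i,M_i)$ the super-level set $\{r:u_i(r)>c\}$ near infinity decomposes into infinitely many bounded open intervals (since both $\{u_i>c\}$ and $\{u_i<c\}$ reach arbitrarily close to $\infty$), on each of which $u_i$ attains an interior maximum that is automatically a local maximum; letting $c\uparrow M_i$ produces local maxima with values tending to $M_i$, and similarly for minima. Once this is in hand, the remainder of the argument is essentially algebraic, driven entirely by the contraction factor $(a+1)(1+b)>1$ built into the Toda structure.
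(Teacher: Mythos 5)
Your proof is correct, but it takes a genuinely different route from the paper's. The paper argues by contradiction from a sequence $r_n\to+\infty$ with $(u_1,u_2)(r_n)\to(c,d)\ne(0,0)$: a gradient bound $\vert\nabla u_i\vert\le c$ from interior elliptic estimates keeps $(u_1,u_2)$ within $\de_0$ of $(c,d)$ on intervals $[r_n-r_0,r_n+r_0]$ of fixed length, so that one of the two right-hand sides of (\ref{equation1}), (\ref{equation2}) is bounded away from zero with a fixed sign there, and integrating that equation against $r\,dr$ over these intervals then produces the contradiction. Your argument stays entirely at the level of the radial ODE: the maximum principle at interior local extrema gives $u_2\ge(1+a)u_1$ at maxima of $u_1$ and $u_1\ge(1+b)u_2$ at maxima of $u_2$ (reversed at minima), and the factor $(1+a)(1+b)>1$ forces $M_i\le 0\le m_i$ in the non-convergent case, while the convergent cases are settled by integrating $(ru_i^\pr)^\pr=4rF_i+o(r)$ to force the limit to be an equilibrium of the nonlinearity --- and $(0,0)$ is the only equilibrium, which is the same algebraic fact ($(1+a)(1+b)\ne1$) underlying the paper's choice of $\eps_0$. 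What your approach buys: it needs only the $L^\infty$ bound of Remark \ref{largex} and no elliptic gradient estimate, and it is stylistically of a piece with the comparison arguments of Lemma \ref{lemma1} and Theorem \ref{theorem1}; the cost is the three-way case analysis and the small real-analysis lemma that the $\limsup$ and $\liminf$ of a bounded non-convergent continuous function are attained along local extrema tending to infinity, which you justify correctly via the bounded components of super-level sets. The paper's argument is shorter to state and more generic, at the price of invoking elliptic regularity.
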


\begin{proof} 
By Remark \ref{largex}, we know that both $u_1(r)$ and $u_2(r)$ are bounded as $r\to+\infty$.
If there is a sequence $r_n\to+\infty$ with $(u_1(r_n),u_2(r_n))\to(c,d)\ne(0,0)$,
then there is a $\de_0>0$ such that
$( e^{a\xi} - e^{\eta-\xi},  e^{\eta-\xi} - e^{-b\eta} )\ne(0,0)$
for any $(\xi,\eta)$ with $\vert\xi-c\vert < \de_0$ and $\vert \eta-d\vert<\de_0$.
By standard estimates for linear elliptic p.d.e., we have $\vert\nabla u(x) \vert \le c$ for
some $c$, and for all $r=\vert x\vert \ge R$, where $R$ is large.  Therefore
there exists $r_0$ (independent of $\eta$) such that
\[
\vert  u_1(r) - u_1(r_n) \vert \le \tfrac{\de_0}2,\quad
\vert  u_2(r) - u_2(r_n) \vert \le \tfrac{\de_0}2
\]
for $r\in[r_n-r_0,r_n+r_0]$.  Thus,
\[
\vert  e^{au_1(r)} - e^{u_2(r) - u_1(r)} \vert \ge \eps_0 > 0, \text{ for all $r\in[r_n-r_0,r_n+r_0]$ }
\]
or
\[
\vert  e^{u_2(r) - u_1(r)} - e^{-b u_2(r)}  \vert \ge \eps_0 > 0, \text{ for all $r\in[r_n-r_0,r_n+r_0]$ }
\]
holds for some $\eps_0$ (independent of $\eta$).  Suppose that the first inequality holds.
Then
\begin{align*}
\eps_0(2r_n+r_0)r_0
&\le
\int_{r_n-r_0}^{r_n+r_0} 
\vert  e^{au_1(r)} - e^{u_2(r) - u_1(r)} \vert  \ rdr
\\
&= \left\vert
\int_{r_n-r_0}^{r_n+r_0} 
 e^{au_1(r)} - e^{u_2(r) - u_1(r)}\  rdr  
 \right\vert
 \le C
\end{align*}
which yields a contradiction as $r_n\to+\infty$.  
Thus the limit of  $(u_1,u_2)$ exists and is equal to $(0,0)$.
\end{proof}

{\em

\noindent
Department of Mathematics\newline
Faculty of Science and Engineering\newline
Waseda University\newline
3-4-1 Okubo, Shinjuku, Tokyo 169-8555\newline
JAPAN

\noindent
Department of Mathematical Sciences\newline
Indiana University-Purdue University, Indianapolis\newline
402 N. Blackford St.\newline
Indianapolis, IN 46202-3267\newline
USA
   
\noindent
Taida Institute for Mathematical Sciences\newline
Center for Advanced Study in Theoretical Sciences  \newline
National Taiwan University \newline
Taipei 10617\newline
TAIWAN
}

\end{document}